\documentclass[12pt,a4 paper]{article}
\usepackage{amssymb}
\usepackage{amsmath}
\usepackage{amsthm}
\usepackage{amsfonts}
\usepackage{amstext}
\usepackage{cite}
\newtheorem{Theorem}{Theorem}[section]
\newtheorem{Lemma}[Theorem]{Lemma}
\newtheorem{Proposition}[Theorem]{Proposition}

\newtheorem{Remark}[Theorem]{Remark}
\def\o{\Omega}
\def\om{\omega}

\def\R{\mathbb{R}}

\def\e{\varepsilon}
\def\rr{R^\nu}
\def\P{\mathbb{P}}
\newcommand{\dif}{\mathrm{d}}
\newcommand{\di}{\mathrm{div}}
\newcommand{\p}{\partial}
\def\C{\mathrm{curl}}
\begin{document}
\title{ Vanishing Viscous Limits for 3D Navier-Stokes Equations with A Navier-Slip Boundary Condition\footnote{This research is partially supported by Zheng Ge Ru Funds, Hong Kong RGC Earmarked Research Grant CUHK4042/08P, CUHK4040/06P, and CUHK4041/11P and by Scientific research
plan projects of Shaanxi Education(09JK770); China Postdoctoral
Science Foundation(20090461305); by Doctor Funds of Yichun Univeristy.}}
\author{ Lizhen Wang$^{a,1}$, Zhouping Xin$^{b,2}$, Aibin Zang$^{b,c,3}$}
\date{}
\maketitle

\begin{center}
$^a$ The Department of Mathematics Northwest University, Xi'an, Shaanxi,  P.R. China\\

$^b$ The Institute of Mathematical Sciences, The Chinese University
of Hong Kong, Shatin, NT, Hong Kong\\
$^c$ The School of Mathematics and Computer Science, Yichun University, Yichun, Jiangxi, P.R.China\\[3mm]
\footnotesize{$^1$Email: lwang19@tom.com, $^2$Email: zpxin@ims.cuhk.edu.hk, $^3$Email: zangab05@126.com}

\end{center}

\vskip 5mm

\begin{abstract}
In this paper, we investigate the vanishing viscosity limit for
solutions to the Navier-Stokes equations with a Navier slip boundary
condition on general compact and smooth domains in $\mathbf{R}^3$.
We first obtain the higher order regularity estimates for the
solutions to Prandtl's equation boundary layers. Furthermore, we
prove that the strong solution to Navier-Stokes equations converges
to the Eulerian one in $C([0,T];H^1(\Omega))$ and
$L^\infty((0,T)\times\o)$, where $T$ is independent of the
viscosity, provided that initial velocity is regular enough.
Furthermore, rates of convergence are obtained also.

\textbf{Keywords}: Navier-Stokes equations; Euler equations; Navier slip boundary conditions;  Prandtl's equation; boundary layer; vanishing
viscosity limit

\textbf{Mathematics Subject Classification(2000)}: 35Q30;
35Q35
\end{abstract}
\numberwithin{equation}{section}

\numberwithin{equation}{section}

\section{Introduction}

\setcounter{equation}{0} ~~~~~~ In this paper, we consider the
vanishing viscosity limit problem from the Navier-Stokes flows on a
general 3-dimensional bounded domain with Navier-slip boundary
condition. The viscous flow is governed by
\begin{equation}\label{1.1}
\left\{\begin{array}{cl} \partial_t u^\nu-\nu\Delta u^\nu+( u^\nu\cdot\nabla) u^\nu+\nabla \pi^\nu=0, &\mbox{in}~~\Omega\times (0, T), \\[3mm]
\nabla\cdot u^\nu = 0, &\mbox{in}~~\Omega\times (0, T), \\[2mm]
\end{array}\right.
\end{equation}
with the boundary conditions
\begin{equation}\label{NAB}
u^\nu\cdot \vec{n}=0, \quad (\C\, u^\nu)\times\vec{n}=0, \quad
\mbox{on}~~\p\Omega\times(0, T),
\end{equation}
where $\vec{n}$ is the outnormal of $\p\o$, and initial velocity $$u^\nu|_{t=0}=u_0(x), ~~~\mbox{in}~~\Omega.$$
Here the unknowns are the velocity $u^\nu(t, x)$ and the scalar
pressure $\pi^\nu(t, x)$,  $u_0(x)$ is the given initial velocity and
 the corresponding problem for the Euler equations reads
\begin{equation}\label{1.3}
\left\{\begin{array}{cl} \partial_t u^0+( u^0\cdot\nabla) u^0+\nabla \pi^0=0, &\mbox{in}~~\Omega\times (0, T), \\[3mm]
\nabla\cdot u^0 = 0, &\mbox{in}~~\Omega\times (0, T), \\[2mm]
u^0\cdot \vec{n}=0,&\mbox{on}~~\p\Omega\times(0, T),\\[2mm]
u^0|_{t=0}=u_0(x), &\mbox{in}~~\Omega.
\end{array}\right.
\end{equation}

It should be noted that the slip boundary condition (1.2) is a special case of the more general Navier-slip boundary condition
\begin{equation}\tag{1.4}
u^\nu\cdot\vec{n}=0, \quad (D(u^\nu)\vec{n} + \alpha u^\nu)_\tau=0,
\quad {\rm on}~~\, \p\o\times(0,T),
\end{equation}
where $D(u^\nu)=\frac{1}{2}(\nabla u^\nu + (\nabla u^\nu)^t)$ and $\tau$ is any tangent direction on $\p\o$.

The problem of vanishing viscosity limits for the Navier-Stokes
equations is a classical issue. In the absence of physical
boundaries, then any smooth solutions to the Euler system can be
approximated by the ones to Navier-Stokes equations, see
\cite{BV, CF,CW,EM,K1,K2,SW,MA}. However, in the presence of physical
boundaries, this problem is a challenging problem due to the
possible formation of boundary layers. The problem for the non-slip
boundary condition was formally derived by Prandtl in \cite{PR}, in
which it was obtained that the boundary layer can be described by an
initial-boundary problem  for a nonlinear degenerate
parabolic-elliptic couple system called the Prandtl's equations.
Under monotonic assumptions on the velocity of the outflow, Oleinik
and her collaborators established the local existence of smooth
solutions for boundary value problem of the 2-dimensional Prandtl's
equations \cite{OS}. In this case, the existence and uniqueness of
global solutions to the Prandtl's equations was established by Xin,
Zhang \cite{XZ}(also see \cite{XY}). In \cite{SC}, Sammartino and
Caflisch obtained the local existence of the analytic solutions to
the Prandtl's equations and a rigorous theory  on the boundary layer
in incompressible fluids with analytic data in the frame of the
abstract Cauchy-Kowaleskaya theory.

However, the usual non-slip assumption was not always accepted from
experimental facts. In \cite{NA}, Navier first proposed the slip
boundary condition (1.4) i.e. the tangential velocity proportional
to the tangential component of the viscous stress while maintaining
the no-flow condition in the normal direction, which is now called
Navier boundary condition. This boundary condition was rigorously
justified as the effective boundary conditions for flows over rough
boundaries, see \cite{JM}.

In contrast to the case of non-slip boundary condition, one would
expect that the boundary layers are much weaker for the Navier-slip
boundary condition, (1.2), and thus it should be easier to settle
the problem of vanishing viscosity. Indeed, there have been many
interesting studies along this line. For 2-dimensional smooth
domains, Yudovich \cite{Y} and Lions, P.L. \cite{LION} studied this
problem for a special class of Navier-slip conditions, the vorticity
free condition, for the incompressible Navier-Stokes equations, i.e.
(1.4) in 2-dimensional space. For the general Navier-slip
conditions, Clopeau, et.al. \cite{CMR}, Lopes Filho, Nussenveig
Lopes and Planas \cite{LNP} obtained that the solution $u^\nu$ to
(\ref{1.1}) converges to the solution $u^0$ of Euler equations in
$L^\infty(0,T; L^2(\R^2_+))$ assuming that initial vorticity is
uniformly bounded.  More generally, Iftimie and Planas \cite{IP}
observed that in both dimension two and three a direct $L^2$
estimate yields the strong $L^2$ convergence to Euler equation, and
that the convergence in $H^2$ is impossible in general. Thus, higher
order (weaker) boundary layers must appear in general. This was
investigated further by Iftimie and Sueur in \cite{IS} for
Navier-slip conditions (1.4) with fixed slip length
($\alpha=const.$) and they improved the strong $L^2$ convergence
with the rate $O(\nu^\frac{3}{4}).$ Furthermore, Wang, X.P., Wang,
Y.G. and Xin, Z.P. \cite{WWX} studied the asymptotic behavior of
solutions to (\ref{1.1}) with Navier boundary conditions (1.4) for
variable slip length ($\alpha=\nu^\gamma$). They observed that the
vanishing viscosity limit for the problem (\ref{1.1}) with boundary
conditions (\ref{NAB}) for $\alpha=\nu^\gamma$ should be influenced
by the amplitude of the slip length. It should be noted that the
approach in \cite{WWX} can yield easily the leading profile
expansion of boundary layers not only in $L^\infty(0,T;L^2(\Omega))$
as given in \cite{IS}, but even in $L^\infty((0,T)\times\Omega)$.
More recently, Masmoudi and Rousset \cite{Mas} proved that the
solutions to (\ref{1.1}) converge uniformly to the one of the Euler
equations in the spatial and time variables under the frame of
conormal Sobolev space.  However,  for general Navier  boundary
conditions, it is difficult to obtain the convergence in higher
order, even in $H^1$ as mentioned in \cite{IP}.

In 2007, Xiao and Xin \cite{xin} first studied the problem
(\ref{1.1}) with the completely slip boundary conditions, i.e. (1.2)
which is a special case of (1.4). For the special case of flat
boundaries, Xiao and Xin obtained the uniform $H^2$-convergence
theory with the optimal convergence rate.  Later,
Beir\~{a}o da Veiga and Crispo obtained the corresponding
$L^p$-theory and the $W^{k,p}$-convergence in \cite{BVC,BVC2}, and
they pointed out that in general it is impossible to have the
$H^2$-convergence for general 3D domains \cite{BVC3}, see also
\cite{XX}. It is also proposed in \cite{BVC} as an challenging open
problem to study the uniform $H^1$-convergence theory of solutions
to the problem (\ref{1.1})-(1.2) for general 3D domains. The only
previous results for this problem is due to Xiao-Xin in \cite{XX}
where they obtained the convergence in $L^\infty((0,T)\times
H^1(\Omega))$ for general smooth 3D domains with the rate $O(\nu)$
with the complete slip boundary condition (1.2) under the stringent
additional condition that the initial vorticity vanishes on the
boundary of the domain.

The main purpose of this paper is to establish the
$L^\infty((0,T)\times H^1(\Omega))$ convergence theory for the
solutions to the Navier-stokes system (1.1) with the slip boundary
condition (1.2) in general 3D domains to the solution to the
inviscid problem (1.3) with a rate $O(\nu^{\frac{1}{4}})$ and to
prove the optimal rate of convergence in
$L^\infty((0,T)\times\Omega)$.

Some of the main difficulties involved with general domains can be illustrated as follows. As pointed out in [12, 30], the solution, $u^\nu$, to (1.1)-(1.3) is expected to have the form
\begin{equation}
u^\nu(t,x)=u^0(t,x)+\sqrt{\nu}u^b(t,x,\frac{z}{\sqrt{\nu}})+O(\nu),
\end{equation}
where $z$ is equivalent to the distance between $x$ and the
boundary, $u^b$ is the leading order boundary layer which is smooth
and decreasing fast in the last variable. For flat boundaries, $u^b$
vanishes identically and thus it is possible to obtain the uniform
$H^3$ or $W^{2,p}$ ($p>3$) convergence theory as in [31, 3, 4].
While for general curved domains, due to the curvature of
$\partial\Omega$, $u^b$ does not vanish in general, which leads to
new difficulties in the estimates of derivatives by the methods in
[12, 31, 3, 4, 30, 32]. Our main strategy to overcome these
difficulties is outlined as follows. The first key step is to
estimate the leading order boundary layer profile $u^b$ to gain
higher order regularities than those known ones in [12, 30], see
(3.9), in particular, the uniform $W^{k,p}$-estimate in slow spatial
variable and $H^s$-estimates for time and fast spatial variables.
The second is to establish the uniform $L^p$-bound ($3<p\leq 6$) for
the remainder in the asymptotic expansion of $u^\nu$ (see (3.1)). It
should be noted that the $L^\infty(0,T;L^p(\Omega))$ convergence
theory with a uniform rate follows easily from the uniform
$L^\infty$-bounds on derivatives obtained in [18] and the
$L^\infty(0,T;L^2(\Omega))$ convergence theory in [12, 30]. However,
our approach yields a better rate of convergence for $p>2$, which
will be useful for our main results. The next step is to derive the
$H^1$-bound in the order $O(\nu^{-\frac{1}{2}})$ for the remainder
in the asymptotic expansion (3.1). This follows from the important
results in [18] that the gradients of the solution, $u^\nu$, to
(1.1)-(1.2) are uniformly bounded for sufficient smooth initial
velocity and the uniform $L^p$-estimates derived in the previous
step. Then the desired $C([0,T]; H^1(\Omega))$ convergence theory
with a rate $O(\nu^{\frac{1}{4}})$ follows from this and the
asymptotic ansatz provided that the initial velocity is regular
enough. The final step is to estimate the $W^{1,p}$ ($p>3$) bound
for the remainder in the asymptotic expansion by following the
arguments for $H^1$-estimates and to obtain the
$L^\infty((0,T)\times\Omega)$ convergence with an optimal rate of
order $O(\nu^{\frac{1}{2}})$.

The rest of the paper is organized as follows. First, in section 2,
we state some notations and preliminary results to be used later.
Then the asymptotic ansatz of the solution $u^\nu$ to (1.1)-(1.2)
and main convergence results are given in section 3. The desired
higher order regularities of the leading order boundary layer
profiles are obtained in section 4. In section 5, the uniform
$L^p$-bound for the remainder of the asymptotic ansatz is derived
for $3<p\leq 6$. Then we derive the $H^1$-estimate of the remainder
and prove the $C([0,T]; H^1(\o))$ convergence of $u^\nu$ to the
solution, $u^0$, to (1.3) with a rate $O(\nu^\frac{1}{4})$, in
section 6. Finally, we also obtain the $W^{1,p}$-estimates with
$3<p\leq 6$ and prove the convergence in $L^\infty((0,T)\times
\Omega)$ with an optimal rate of order $O(\nu^{\frac{1}{2}})$ in
section 6.

\section{Notations and preliminaries}
\setcounter{equation}{0} ~~~~~~In this section, we will give some
notations and preliminary results which will be employed later. Let
$\Omega$ be a bounded smooth domain in $\R^3$, $k\in\R$, and
$1<p\leq+\infty$.

In the following sections, we will utilize the classical Lebesgue
spaces $(L^p(\Omega),\|\cdot\|_p)$,
$(L^p(\partial\Omega),\|\cdot\|_{p,\partial\Omega})$ and the
standard Sobolev spaces $(W^{k,p}(\Omega),\|\cdot\|_{k,p})$ and
trace spaces
$(W^{m,p}(\partial\Omega),\|\cdot\|_{m,p,\partial\Omega})$.
$L^2_\sigma(\Omega)$ is the subspace of $L^2(\Omega)$ satisfying the
divergence free condition.

Let $(W^{k,m,l,p}(\Omega),\|\cdot\|_{k,m,l,p})$ be the anisotropic
Sobolev spaces defined as follows: for $k,m,l\in \mathbf{N},~\, p\ge 1$
\begin{eqnarray*}
&W^{k,m,l,p}(\Omega\times\mathbf{R}_+)=\left\{g(x,z)\in L^p(\Omega\times R_+): (1+z^{2k})^{\frac{1}{p}}\partial^\alpha_x\partial^\beta_zg(x,z)\in
L^p(\Omega\times R_+)\right.,\\
&\left. |\alpha|\leq m, \beta\in\mathbf{N}, \beta\leq l\right\}
\end{eqnarray*}
with the norm
$$\|g\|^p_{k,m,l,p}=\sum_{|\alpha|\le m,|\beta|\le l}\iint_{\Omega\times\mathbf{R}_+}(1+z^{2k})|\partial^\alpha_x\partial^\beta_zg(x,z)|^{p}\dif x\dif z.$$
 When $p=2$, for simplicity, denote
$W^{k,m,l,2}(\Omega)$ by $H^{k,m,l}(\Omega)$.
In the rest of paper, $C$ will be a generic
constant, which may change from line to line, but is independent of the viscosity.

In the sequel, we list several results including existence theorems of solution to the Navier-Stokes equations and the Euler equations, which will be used later.
\begin{Lemma}
Let $\Omega$ be given a smooth and open set and $1<p<\infty$. Then
the following inequality holds true: There exists $C>0$, such that
$$\|u\|_{p,\partial\Omega}\le C\|u\|^{1-\frac{1}{p}}_p\|u\|^{\frac{1}{p}}_{1,p},~\, \forall u\in W^{1,p}(\o).$$
\end{Lemma}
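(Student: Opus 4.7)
The plan is to reduce the inequality to the half-space case by a standard localization argument and then prove the key bound there by a one-dimensional integration by parts followed by H\"older's inequality.

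First, I would cover a neighborhood of $\partial\Omega$ by finitely many open sets $U_i$, each equipped with a smooth diffeomorphism $\Phi_i$ that straightens $\partial\Omega\cap U_i$ onto an open piece of $\{z=0\}\subset \R^3_+$, and choose a subordinate partition of unity $\{\chi_i\}$ with $\sum_i \chi_i \equiv 1$ on $\partial\Omega$. Since $\Omega$ is smooth and bounded, the Jacobians of $\Phi_i$ and $\Phi_i^{-1}$ are bounded above and below, so it suffices to prove the estimate for $v(y)=(\chi_i u)\circ\Phi_i^{-1}(y)$, compactly supported in $\overline{\R^3_+}$, with the constants depending only on the fixed geometry; the original inequality for $u$ then follows by summing over $i$ and absorbing all transition factors into $C$.

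Next, by density it is enough to consider $v\in C^1(\overline{\R^3_+})$ with compact support. For such $v$, the fundamental theorem of calculus in the normal variable gives
\begin{equation*}
|v(y',0)|^p = -\int_0^\infty \p_z |v(y',z)|^p\, \dif z = -p\int_0^\infty |v|^{p-2} v\, \p_z v\, \dif z.
\end{equation*}
Integrating over $y'\in\R^2$ and applying H\"older's inequality with conjugate exponents $p/(p-1)$ and $p$ yields
\begin{equation*}
\int_{\R^2}|v(y',0)|^p\, \dif y' \le p\iint_{\R^3_+}|v|^{p-1}|\p_z v|\,\dif y'\dif z \le p\,\|v\|_{L^p(\R^3_+)}^{p-1}\|\p_z v\|_{L^p(\R^3_+)}.
\end{equation*}
Taking $p$-th roots gives $\|v(\cdot,0)\|_{L^p(\R^2)}\le p^{1/p}\|v\|_p^{1-1/p}\|v\|_{1,p}^{1/p}$, which is exactly the desired inequality in the model half-space setting.

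Finally, transferring this back to $U_i\cap\Omega$ via $\Phi_i^{-1}$ and summing over the finite cover produces the stated bound on $\partial\Omega$, with the constant $C$ depending only on the $C^1$ norms of the chart maps $\Phi_i$ and the partition $\{\chi_i\}$. A density argument then extends the estimate to all $u\in W^{1,p}(\Omega)$. No step is really an obstacle: the only point requiring a bit of care is controlling the lower order terms that arise when $\nabla(\chi_i u)$ is expanded, but those are dominated by $\|u\|_p$, which is already on the right-hand side, so they cause no trouble.
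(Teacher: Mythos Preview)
Your argument is correct. The localization to the half-space by charts and a partition of unity, the one-dimensional fundamental theorem of calculus applied to $|v|^p$, and the subsequent H\"older inequality are exactly the standard route to this multiplicative trace estimate, and you handle the bookkeeping (Jacobians, the lower-order term $u\,\nabla\chi_i$, density) appropriately.

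The paper, however, does not give a proof at all: it simply writes ``This follows easily from Lemma~7.44 in \cite{ad}'' and moves on. So your proposal is not the same approach but rather a self-contained derivation of what the paper takes as a black box from Adams--Fournier. The advantage of your version is that it is independent of the reference and makes the interpolation structure transparent; the advantage of the paper's one-line citation is brevity, since this inequality is entirely standard and not the point of the article.
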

\begin{proof}
This follows easily from Lemma 7.44 in\cite{ad}.\end{proof}
\begin{Lemma}
Let $u\in W^{s,p}(\o)$ be a vector-valued function. Then for $s\ge
1$
$$\|u\|_{s,p}\le C\left(\|\nabla\times u\|_{s-1,p}+\|\di u\|_{s-1,p}+\|u\cdot n\|_{s-\frac{1}{p},\p\o}+\|u\|_{s-1,p}\right).$$
\end{Lemma}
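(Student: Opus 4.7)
The plan is to prove this div-curl-trace estimate by induction on $s$, combined with a preliminary reduction to the case of vanishing normal trace and standard $L^p$-elliptic theory. Using a right inverse of the normal-trace map $\gamma_n:W^{s,p}(\o)\to W^{s-1/p,p}(\p\o)$, one constructs $\phi\in W^{s,p}(\o)$ with $\phi\cdot n=u\cdot n$ on $\p\o$ and $\|\phi\|_{s,p}\le C\|u\cdot n\|_{s-1/p,p,\p\o}$. Then $v:=u-\phi$ satisfies $v\cdot n=0$, and $\nabla\times v$, $\di v$, and $v$ are each controlled in the appropriate norms by the right-hand side of the desired inequality (plus $\|\phi\|_{s,p}$). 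It therefore suffices to prove, for $v\in W^{s,p}(\o)$ with $v\cdot n=0$,
$$\|v\|_{s,p}\le C\bigl(\|\nabla\times v\|_{s-1,p}+\|\di v\|_{s-1,p}+\|v\|_{s-1,p}\bigr).$$

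For the base case $s=1$, this is the classical first-order Hodge estimate for vector fields with vanishing normal trace; it follows from applying $L^p$ Calder\'on--Zygmund theory to the vector identity $-\Delta v=\nabla\times(\nabla\times v)-\nabla(\di v)$, or equivalently from the Helmholtz decomposition together with $L^p$ estimates for the Neumann Laplacian on $\o$. For the inductive step, one localizes via a smooth partition of unity subordinate to a cover of $\overline{\o}$ by coordinate charts flattening $\p\o$. In an interior chart, applying any derivative $\p^\alpha$ of order $s$ and invoking interior $L^p$-elliptic estimates for $-\Delta v$ yields the bound. In a boundary chart, where $\p\o$ becomes $\{x_3=0\}$, one applies tangential derivatives $\p_j$, $j=1,2$, to the flattened $v$. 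Since $\p_j$ commutes with both $\nabla\times$ and $\di$, and since tangential differentiation essentially preserves the vanishing of the normal component, the inductive hypothesis applied to $\p_j v$ controls all derivatives of order $s$ with at least one tangential direction. The remaining purely normal derivatives are recovered from the vector Laplacian identity by solving for $\p_3^2 v$ in terms of tangential second derivatives together with $\nabla\times(\nabla\times v)-\nabla(\di v)$, and higher normal derivatives are iterated from there.

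The main obstacle is the bookkeeping at the boundary during the induction: in a local flattening chart the true outward normal $n$ does not coincide with $\p_3$, so the scalar constraint $v\cdot n=0$ becomes a linear combination of the components of $v$ with coefficients depending on the second fundamental form of $\p\o$. Consequently, applying a tangential derivative to $v\cdot n=0$ does not simply give $(\p_j v)\cdot n=0$ but produces additional zero-order terms in $v$; these must be absorbed into the $\|v\|_{s-1,p}$ term via the inductive hypothesis. Once these curvature-induced commutator terms are tracked carefully across the partition of unity, all lower-order contributions are accommodated and the estimate closes at level $s$.
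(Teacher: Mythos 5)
Your outline is correct, but note that the paper does not actually prove this lemma at all: its ``proof'' is a one-line citation to \cite{BB,xin}, so what you have written is essentially a reconstruction of the standard argument underlying those references (lift the normal trace, reduce to $v\cdot\vec{n}=0$, prove the first-order case by classical $L^p$ Hodge/elliptic theory, then climb in $s$ by localizing, flattening $\partial\Omega$, differentiating tangentially, and recovering the purely normal derivatives from $-\Delta v=\nabla\times(\nabla\times v)-\nabla(\mathrm{div}\,v)$). Two points would need care in a full write-up. First, your reduction to vanishing normal trace is not stable under the induction: on $\partial\Omega$ one has $(\partial_j v)\cdot\vec{n}=-v\cdot\partial_j\vec{n}\neq 0$, so the inductive hypothesis you should invoke for $\partial_j v$ is the full statement of the lemma, with its normal-trace term; that extra term $\|(\partial_j v)\cdot\vec{n}\|_{s-1-\frac{1}{p},p,\partial\Omega}=\|v\cdot\partial_j\vec{n}\|_{s-1-\frac{1}{p},p,\partial\Omega}$ is then bounded by $C\|v\|_{s-1,p}$ via the trace theorem, which is the clean way to absorb the curvature terms you mention (alternatively, re-lift at each step). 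Likewise, after flattening, curl and div become variable-coefficient operators, so the asserted commutation with tangential derivatives holds only modulo first-order commutators, again absorbable into $\|v\|_{s-1,p}$. Second, your base case $s=1$ is itself a nontrivial classical estimate (it is, up to the zero-order term, the paper's Lemma 2.3, which the paper also only cites), so invoking Calder\'on--Zygmund/Helmholtz theory there is acceptable but is the one substantive ingredient you do not actually prove. In short, the paper buys brevity by quoting Bourguignon--Brezis and Xiao--Xin; your route gives a self-contained and instructive proof, and with the two bookkeeping fixes above it closes correctly.
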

\begin{proof}
See \cite{BB,xin}.
\end{proof}
\begin{Lemma}
For all $u\in W^{1,p}(\o),~\, 1<p<+\infty,$ there exists $C>0$ such that
$$\|u\|_p\le C\|\nabla u\|_p,$$
$$\|\nabla u\|_p\le C(\|\nabla\times u\|_p+\|\di u\|_{p}),$$ for all $u $ such that $ u\cdot \vec{n}|_{\p\o}=0, ~\mbox{or}~u\times \vec{n}|_{\p\o}=0.$
\end{Lemma}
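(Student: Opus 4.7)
My plan is to prove the two inequalities in the order stated, using the first to feed into the second.

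For the Poincar\'e-type inequality $\|u\|_p \le C\|\nabla u\|_p$ I would argue by contradiction via the Rellich--Kondrachov compact embedding $W^{1,p}(\o)\hookrightarrow L^p(\o)$. Assume the bound fails; then there exists $\{u_n\}\subset W^{1,p}(\o)$ satisfying the prescribed boundary condition with $\|u_n\|_p=1$ and $\|\nabla u_n\|_p\to 0$. Boundedness in $W^{1,p}(\o)$ yields a subsequence converging strongly in $L^p$ to some $u$ with $\|u\|_p=1$, and the distributional identity $\nabla u=0$ forces $u$ to be a constant vector; continuity of the trace map transfers the boundary condition to the limit. For any smooth compact closed surface in $\R^3$ the Gauss map is surjective onto $S^2$ (take extrema of $x\mapsto x\cdot v$ on $\p\o$ for each $v\in S^2$), so no nonzero constant vector can satisfy $u\cdot\vec n\equiv 0$ (nor $u\times\vec n\equiv 0$) on $\p\o$, contradicting $\|u\|_p=1$.

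For the div--curl inequality $\|\nabla u\|_p\le C(\|\nabla\times u\|_p+\|\di u\|_p)$ I would invoke Lemma 2.2 at $s=1$, which with $u\cdot\vec n|_{\p\o}=0$ reduces to
\[
  \|u\|_{1,p}\le C\bigl(\|\nabla\times u\|_p+\|\di u\|_p+\|u\|_p\bigr),
\]
and then absorb the lower-order $\|u\|_p$ via a second contradiction/compactness argument. Rescale a hypothetical counterexample to $\{u_n\}$ with $\|\nabla u_n\|_p=1$ and $\|\nabla\times u_n\|_p+\|\di u_n\|_p\to 0$; the Poincar\'e bound from the first step gives $\|u_n\|_p\le C$, Rellich--Kondrachov yields a strong $L^p$-limit $u$ which is curl-free, divergence-free, and satisfies the boundary condition, and writing $u=\nabla\phi$ locally then solving the harmonic Neumann problem $\Delta\phi=0,\ \p_n\phi=0$ forces $u=0$. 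Hence $\|u_n\|_p\to 0$, and Lemma 2.2 applied back to $u_n$ drives $\|\nabla u_n\|_p\to 0$, contradicting the normalization. The case $u\times\vec n|_{\p\o}=0$ is handled by the analogous version of Lemma 2.2.

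The main obstacle is the kernel step in the second inequality: on a general smooth bounded $\o\subset\R^3$ the space of curl-free, divergence-free fields satisfying $u\cdot\vec n=0$ (respectively $u\times\vec n=0$) on $\p\o$ is finite-dimensional with dimension equal to the first (respectively second) Betti number of $\o$, so the clean implication ``$u=0$'' needs either a topological assumption on $\o$ or a preliminary Hodge projection that quotients out this kernel before the compactness step is run. The Poincar\'e step, by contrast, relies only on the geometric surjectivity of the Gauss map of $\p\o$, which is automatic for any smooth bounded domain.
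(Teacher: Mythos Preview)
The paper does not actually prove this lemma; it simply cites \cite{xin} (Xiao--Xin 2007) and moves on. Your proposal therefore supplies far more than the paper does, and your two-step compactness strategy (Poincar\'e via Rellich--Kondrachov and Gauss-map surjectivity, then absorption of the lower-order term in the div--curl estimate of Lemma~2.2 by a second compactness argument) is the standard and correct route to this kind of inequality.

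Your self-diagnosed obstacle is exactly right and worth emphasizing: the second inequality, as written in the lemma, is \emph{false} on domains with nontrivial first (respectively second) Betti number, because the space of tangential (respectively normal) harmonic fields is then nonzero and lies in the kernel of both $\nabla\times$ and $\di$ while having nonzero gradient. The cited reference \cite{xin} works under assumptions that exclude this (or, equivalently, uses the estimate only in situations where the harmonic component is controlled separately). So your proof is complete once one either assumes $\o$ simply connected with connected boundary, or replaces the conclusion by $\|\nabla u\|_p\le C(\|\nabla\times u\|_p+\|\di u\|_p+\|Hu\|_p)$ with $H$ the projection onto the finite-dimensional harmonic-field space; the latter is what the compactness argument actually delivers without topological hypotheses.
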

\begin{proof}
This is proved in \cite{xin}.
\end{proof}

It should be noted that in Lemma 2.3 and what follows, $\vec{n}$ denotes the unit normal of $\partial\Omega$.

\begin{Lemma}
 Let $u$ be a smooth function such that
$\C \,u\times\vec{n}=0$. Then $\omega=\mathrm{curl}\,u$ satisfies the
following equality on $\partial\Omega$
$$ -\frac{\p\omega}{\partial
\vec{n}}\omega=(\epsilon_{1jk}\epsilon_{1i\gamma}+\epsilon_{2jk}\epsilon_{2i\gamma}+\epsilon_{3jk}\epsilon_{3i\gamma})\omega_j\omega_i\partial_kn_\gamma, $$
where $\epsilon_{ijk}$ denotes the totally anti-symmetric tensor
such that $(\varphi\times\psi)_i=\epsilon_{ijk}\varphi_j\psi_k$.
\end{Lemma}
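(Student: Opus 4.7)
The plan is to reduce both sides of the claimed identity to scalar expressions involving $\omega_n := \omega\cdot\vec{n}$ and the geometry of $\p\Omega$, exploiting the contracted $\epsilon$-identity on the right-hand side and combining the boundary condition $\omega\times\vec{n}=0$ with $\di\,\omega=0$ on the left.

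First I would simplify the right-hand side using
$\sum_{\alpha=1}^{3}\epsilon_{\alpha jk}\epsilon_{\alpha i\gamma}=\delta_{ji}\delta_{k\gamma}-\delta_{j\gamma}\delta_{ki}$, which collapses it to $|\omega|^2\,\di\,\vec{n} - \omega_\gamma\omega_k\,\p_k n_\gamma$. Extending $\vec{n}$ smoothly as a unit vector field into a tubular neighborhood of $\p\Omega$ (so $|\vec{n}|\equiv 1$ there) gives $n_\gamma\,\p_k n_\gamma=\tfrac12\p_k|\vec{n}|^2=0$ in the neighborhood. The hypothesis $\omega\times\vec{n}=0$ on $\p\Omega$ forces $\omega=\omega_n\vec{n}$ pointwise there, so the second term becomes $\omega_n^2 n_k n_\gamma\,\p_k n_\gamma=0$. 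Thus the right-hand side reduces to $|\omega|^2\,\di\,\vec{n}$ on $\p\Omega$.

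For the left-hand side, $\omega_i\p_k\omega_i=\tfrac12\p_k|\omega|^2$ gives $-\frac{\p\omega}{\p\vec{n}}\cdot\omega=-\tfrac12\p_n|\omega|^2$. Writing $\omega=\omega_n\vec{n}+\omega_\tau$ in the tubular neighborhood with $\omega_\tau\cdot\vec{n}\equiv 0$, the boundary condition reads $\omega_\tau|_{\p\Omega}=0$, whence $\p_n|\omega|^2=2\omega_n\p_n\omega_n$ on $\p\Omega$ (the $|\omega_\tau|^2$ contribution vanishes there since $\omega_\tau=0$). The key step is to evaluate $\p_n\omega_n$ via $\di\,\omega=0$: expanding $0=\di\,\omega=\p_n\omega_n+\omega_n\,\di\,\vec{n}+\di\,\omega_\tau$, I would show $\di\,\omega_\tau=0$ on $\p\Omega$, since tangential surface derivatives of $\omega_\tau$ vanish along the boundary (as $\omega_\tau\equiv 0$ there) and differentiating the identity $\omega_\tau\cdot\vec{n}\equiv 0$ in the normal direction together with $\omega_\tau|_{\p\Omega}=0$ yields $n_i\p_n\omega_{\tau,i}=0$. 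Consequently $\p_n\omega_n=-\omega_n\,\di\,\vec{n}$ on $\p\Omega$, and combining, $-\tfrac12\p_n|\omega|^2=\omega_n^2\,\di\,\vec{n}=|\omega|^2\,\di\,\vec{n}$, matching the simplified right-hand side.

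The main point requiring care is the vanishing of $\di\,\omega_\tau$ on $\p\Omega$: one must cleanly separate its tangential and normal contributions and use both $\omega_\tau\equiv 0$ along the surface and the orthogonality constraint $\omega_\tau\cdot\vec{n}\equiv 0$ in the neighborhood. Once these geometric identities are in hand, the claimed boundary formula follows from a short index computation.
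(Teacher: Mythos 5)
Your proof is correct, and it necessarily takes a different route from the paper, because the paper gives no argument for this lemma at all: it simply defers to Beir\~{a}o da Veiga--Berselli \cite{BVB}. Your computation is self-contained and sound. The contraction $\sum_{\alpha}\epsilon_{\alpha jk}\epsilon_{\alpha i\gamma}=\delta_{ji}\delta_{k\gamma}-\delta_{j\gamma}\delta_{ki}$ correctly reduces the right-hand side to $|\omega|^2\,\mathrm{div}\,\vec{n}-\omega_k\omega_\gamma\partial_k n_\gamma$, which, under $\omega\times\vec{n}=0$ (so $\omega=(\omega\cdot\vec{n})\vec{n}$ on $\partial\Omega$) and the unit extension $\vec{n}=\nabla\varphi$, $|\nabla\varphi|=1$, fixed in Section 3, collapses to $(\omega\cdot\vec{n})^2\,\mathrm{div}\,\vec{n}$; and your treatment of the left-hand side via $-\tfrac12\partial_n|\omega|^2$, the splitting $\omega=\omega_n\vec{n}+\omega_\tau$ with $\omega_\tau\cdot\vec{n}\equiv0$ near $\partial\Omega$, and $\mathrm{div}\,\omega=0$ (automatic because $\omega=\mathrm{curl}\,u$) produces the same quantity. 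The one delicate step, $\mathrm{div}\,\omega_\tau=0$ on $\partial\Omega$, is handled correctly: the purely tangential part of the divergence vanishes because $\omega_\tau$ vanishes identically along the boundary, and the remaining piece $n_i\partial_n\omega_{\tau,i}$ vanishes after differentiating $\omega_\tau\cdot\vec{n}\equiv0$ in the normal direction and using $\omega_\tau|_{\partial\Omega}=0$. What your argument adds over the bare citation is transparency: it shows that both sides of the stated identity equal $(\omega\cdot\vec{n})^2$ times the surface divergence of $\vec{n}$ (a mean-curvature term, independent of the particular unit extension chosen), and it makes explicit which hypotheses are actually used, namely $\mathrm{div}\,\omega=0$ and the normalized extension of $\vec{n}$, without which the expression $\partial_k n_\gamma$ in the statement would be ambiguous.
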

\begin{proof}
This follows from \cite{BVB}.
\end{proof}
\begin{Lemma}(Hardy's Inequality) If $\o\subset \mathbb{R}^n, n\ge2$, is a bounded lipschitz domain, then
$$\int_\o \frac{|u(x)|^p}{d(x,\p\o)^{p-\beta}}\dif x\le C\int_{\o}\frac{|\nabla u|^p}{d(x,\p\o)^{-\beta}}\dif x,~\, \forall u\in C_0^\infty(\Omega),$$
for all $\beta<p-1$, where $d(x,\p\o)$ is the distance between $x$ and $\p\o.$
\end{Lemma}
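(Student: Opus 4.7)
The plan is to reduce the weighted multi-dimensional inequality to the one-dimensional Hardy inequality via a standard localization-and-flattening argument. Since $\o$ is bounded and Lipschitz, I would cover $\bar\o$ by finitely many open sets $\{U_j\}_{j=0}^N$, where $U_0 \subset\subset \o$ and each $U_j$ with $j\ge 1$ is a neighborhood of a boundary point supporting a bi-Lipschitz homeomorphism $\Phi_j$ flattening $\o\cap U_j$ into the half-space $\{y_n>0\}$, so that $d(x,\p\o)$ is comparable to the last coordinate $y_n$ on that chart, with constants depending only on the Lipschitz character of $\p\o$. Taking a smooth partition of unity $\{\phi_j\}$ subordinate to this cover and writing $u=\sum_j \phi_j u$ for $u\in C_0^\infty(\o)$, the interior piece $\phi_0 u$ is handled trivially because $d$ is bounded below on $U_0$. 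Each boundary piece $\phi_j u$ reduces, after the bi-Lipschitz change of variables and using the boundedness of Jacobians and the chain rule for gradients, to the half-space inequality
\begin{equation*}
\int_{\R^n_+} \frac{|v(y)|^p}{y_n^{p-\beta}}\dif y
\le C \int_{\R^n_+} |\nabla v(y)|^p\, y_n^\beta \dif y,
\end{equation*}
for a compactly supported $v$ vanishing on $\{y_n=0\}$.

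By Fubini, this half-space estimate follows once I establish the one-dimensional Hardy inequality on each fiber $\{y'\}\times(0,\infty)$. To that end, I would write $t^{\beta-p}=\tfrac{1}{\beta-p+1}\tfrac{d}{dt}t^{\beta-p+1}$ and integrate by parts against $|v|^p$, obtaining
\begin{equation*}
\int_0^\infty |v|^p\, t^{\beta-p}\dif t
= -\frac{p}{\beta-p+1}\int_0^\infty |v|^{p-2} v\, \p_t v\, t^{\beta-p+1}\dif t.
\end{equation*}
The boundary term at $t=0$ vanishes because $\beta-p+1<0$ combined with $v(y',0)=0$ forces $|v|^p t^{\beta-p+1}\to 0$ as $t\to 0^+$, while the term at infinity vanishes by compact support. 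Applying H\"older's inequality with exponents $p/(p-1)$ and $p$ on the right-hand side and then absorbing the resulting $(p-1)/p$ power of the left-hand side yields the one-dimensional bound with explicit constant $C=[p/(p-1-\beta)]^p$. The hypothesis $\beta<p-1$ is precisely what is needed to make the boundary term vanish and to legitimate the absorption step.

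The main technical obstacle is the geometric step of the flattening: for a merely Lipschitz boundary there is no pointwise outward normal and $\nabla d$ exists only almost everywhere, so one must carefully verify, using the Lipschitz boundary parametrization, that $d(x,\p\o)$ is truly comparable to the flattened normal coordinate $y_n$ in each chart, uniformly in $j$. Once this comparability and the bounded-Jacobian transformation rules for the weighted integrals are in hand, summing the chart contributions and handling the interior piece by a standard Poincar\'e-type bound completes the proof, and the rest of the argument is a routine one-dimensional computation.
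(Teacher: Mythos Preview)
Your argument is correct and is in fact the standard route to Hardy's inequality on Lipschitz domains: localize by a partition of unity, flatten each boundary chart bi-Lipschitzly so that $d(x,\p\o)$ is comparable to the last coordinate, reduce to the half-space by a change of variables, and then obtain the half-space estimate fiberwise from the one-dimensional Hardy inequality via the integration-by-parts/H\"older/absorption trick. One small refinement: since $u\in C_0^\infty(\o)$, the flattened function $v$ actually vanishes in a \emph{neighborhood} of $\{y_n=0\}$, not merely on $\{y_n=0\}$; this is what makes the boundary term at $t=0$ trivially zero for every $\beta$, whereas your stated justification (``$\beta-p+1<0$ combined with $v(y',0)=0$'') would only force $|v|^p t^{\beta-p+1}\to 0$ when $\beta>-1$. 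This is a cosmetic point and does not affect the validity of your proof.

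By contrast, the paper does not prove the lemma at all: it simply cites Ne\v{c}as \cite{N}. So your approach is not merely different but strictly more informative --- you supply the actual mechanism (one-dimensional Hardy plus Lipschitz flattening) that underlies the cited result, together with the explicit constant $[p/(p-1-\beta)]^p$ in the model case.
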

\begin{proof}
See \cite{N}
\end{proof}
\begin{Lemma}(Gronwall's Lemma)

a) (Differential Version) Suppose that $h$ and $r$ are integrable on $(a,b)$ and nonnegative a.e. in $(a,b)$. Further assume that  $y\in C([a,b])$, $y'\in L^1(a,b)$, and
$$y'(t)\le h(t)+r(t)y(t) ~\mbox{for a.e.}~ t\in(a,b).$$
Then
$$y(t)\le \left[y(a)+\int^t_a h(s)\exp\left(-\int^s_a r(\tau)\dif\tau\right)\dif s\right]\exp\left(\int^t_a r(s)\dif s\right),~\, t\in[a,b].$$

b) (Integral Form) Suppose that $h$ is continuous on $[a,b]$, $r$ is
nonnegative and integrable on $(a,b)$ and $y\in C([a,b])$  satisfies
the following inequality:
$$y(t)\le h(t)+\int_a^t r(s)y(s)\dif s ~\mbox{for a.e.}~ t\in(a,b).$$
Then
$$y(t)\le h(t)+\int^t_a h(s)r(s)\exp\left(\int^t_s r(\tau)\dif\tau\right)\dif s, ~\,t\in[a,b].$$

c) (Local Version) Let $T,\alpha, c_0>0$ be given constants and $h$ be a nonnegative integrable function on $[0,T]$. Assume that $y\geq 0$ and $y\in C^1([0,T])$ satisfies
$$y'(t)\le h(t)+c_0 y(t)^{1+\alpha} ~\mbox{a.e.}~ t\in(0,T).$$
Let $t_0\in [0,T]$ be such that $\alpha c_0 H(t_0)^\alpha t_0<1$, where
$$H(t)=y(0)+\int^t_0h(s)\dif s.$$
Then for all $t\in[0,t_0]$ there holds
$$y(t)\le H(t)+H(t)\left((1-\alpha c_0H(t)^\alpha t)^{-\frac{1}{\alpha}}-1\right).$$
\end{Lemma}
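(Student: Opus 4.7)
For part (a), the plan is the standard integrating-factor argument. Set $R(t) = \int_a^t r(s)\,\dif s$ and consider $\phi(t) = y(t)e^{-R(t)}$. The hypothesis rewrites as $y'(t) - r(t)y(t) \le h(t)$, so $\phi'(t) \le h(t)e^{-R(t)}$ for a.e.\ $t \in (a,b)$. Integrating from $a$ to $t$ and multiplying through by $e^{R(t)}$ gives the stated bound directly.

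Part (b) reduces to part (a). Introduce $Y(t) = \int_a^t r(s)y(s)\,\dif s$, so the integral hypothesis reads $y(t) \le h(t) + Y(t)$. Multiplying by $r(t) \ge 0$ yields $Y'(t) = r(t)y(t) \le r(t)h(t) + r(t)Y(t)$ a.e., with $Y(a)=0$. Applying part (a) to $Y$ with source $r(t)h(t)$ and coefficient $r(t)$ gives $Y(t) \le \int_a^t r(s)h(s)\exp\bigl(\int_s^t r(\tau)\,\dif\tau\bigr)\,\dif s$; plugging back into $y \le h + Y$ yields the conclusion.

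For part (c) the strategy is a barrier argument against the separable comparison ODE $v' = c_0 v^{1+\alpha}$, whose maximal existence time is proportional to $v(0)^{-\alpha}/(\alpha c_0)$. Define $\phi(t) = H(t) + c_0\int_0^t y(s)^{1+\alpha}\,\dif s$, so that $y \le \phi$ and $\phi'(t) \le h(t) + c_0\phi(t)^{1+\alpha}$. Fix an arbitrary target time $t \in (0,t_0]$ and set $v(s) = \phi(s) - H(s) + H(t)$ on $[0,t]$; then $v(0) = H(t)$, and monotonicity of $H$ gives $\phi(s) \le v(s)$, so $v'(s) = \phi'(s) - h(s) \le c_0 v(s)^{1+\alpha}$. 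Since $v(s) \ge H(t) > 0$, we may separate variables and integrate from $0$ to $t$; together with the hypothesis $\alpha c_0 H(t_0)^\alpha t_0 < 1$ (which keeps the right-hand side positive), this yields $v(t)^{-\alpha} \ge H(t)^{-\alpha} - \alpha c_0 t$, whence $y(t) \le \phi(t) = v(t) \le H(t)(1-\alpha c_0 H(t)^\alpha t)^{-1/\alpha}$, the claimed bound.

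The main obstacle is part (c): the conclusion carries $H(t)$ rather than the weaker $H(t_0)$, which forces the barrier to be constructed by first fixing the endpoint $t$ and then dominating $H(s)$ by the constant $H(t)$ on $[0,t]$ via monotonicity. A naive comparison against the ODE with initial datum $H(t_0)$ would only produce the looser estimate $y(t) \le H(t_0)(1-\alpha c_0 H(t_0)^\alpha t)^{-1/\alpha}$, so the sharp pointwise-in-$t$ statement requires this renormalization trick where $H(t)$ is frozen into the barrier's initial value.
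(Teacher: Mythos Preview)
Your proofs for all three parts are correct. The paper itself does not prove this lemma at all: for parts (a) and (b) it simply cites Novotn\'{y}--Stra\v{s}kraba, and for part (c) it refers to the Appendix of Diening--R\r u\v zi\v cka. So there is nothing to compare your argument against beyond checking that it is sound, and it is: the integrating-factor computation in (a) and the reduction in (b) are standard, and your barrier construction in (c) --- freezing the endpoint $t$, shifting $\phi$ by $H(t)-H(s)$ so that the initial value becomes $H(t)$, and then separating variables in $v' \le c_0 v^{1+\alpha}$ --- is exactly the right device to obtain the sharp bound with $H(t)$ rather than $H(t_0)$. Note that at the final step $v(t)=\phi(t)-H(t)+H(t)=\phi(t)$, so the shift unwinds cleanly at the endpoint. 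The degenerate case $H(t)=0$ (forcing $y(0)=0$ and $h=0$ a.e.) is handled by comparison with the superlinear ODE started at $\epsilon>0$ and letting $\epsilon\to 0$; you may want to mention this explicitly, but it is routine.
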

\begin{proof}
a) and b) are standard, see \cite{NS}. For the proof of part c), one can refer to the Appendix of \cite{dien}.
\end{proof}

The following theorems ensures the existence of strong solutions to the Euler equations.
\begin{Theorem} Assume that $\Omega$ is a regular bounded open set
of $\mathbf{R}^3$. Let $m$ and $p$ be given, $p\geq1$,
$m>1+\frac{3}{p}$. Then for each $u_0\in W^{m,p}(\Omega)$ such that
$\mathrm{div} u_0=0$ and $u_0\cdot \vec{n}=0 $ on $\partial\Omega$, there exist $\bar{T}\le T$ and
a unique function $u^0$ and a function $\pi$ on $(0,\bar{T})$ , such that $u^0\in
C([0,\bar{T}];W^{m,p}(\Omega))\cap C^1([0,\bar{T}];W^{m-1,p}(\Omega))$ and $\pi^0\in
L^\infty(0,\bar{T};W^{m+1,p}(\Omega))$ solve (\ref{1.3}).
\end{Theorem}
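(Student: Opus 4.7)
The plan is to construct the solution by viscous approximation---for instance, using the Navier--Stokes system with slip boundary condition (1.2), whose $W^{m,p}$ well-posedness is classical---and then to pass to the inviscid limit via uniform $W^{m,p}$ estimates. The restriction $m > 1 + \frac{3}{p}$ is used precisely to ensure the Sobolev embedding $W^{m,p}(\Omega) \hookrightarrow W^{1,\infty}(\Omega)$, which is what allows one to control the transport-type nonlinearity.

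The heart of the argument is a closed estimate for the vorticity $\omega = \mathrm{curl}\,u^0$. Taking curl of the Euler momentum equation kills the pressure and yields
\[ \partial_t \omega + (u^0 \cdot \nabla)\omega = (\omega \cdot \nabla) u^0. \]
I would apply $\partial^\alpha$ with $|\alpha| \leq m-1$, multiply by $|\partial^\alpha \omega|^{p-2}\partial^\alpha \omega$, and use Moser-type commutator estimates together with the $W^{1,\infty}$ embedding to obtain
\[ \frac{d}{dt}\|\omega\|_{m-1,p} \leq C\,\|u^0\|_{m,p}\,\|\omega\|_{m-1,p}. \]
Lemma 2.2 then provides the crucial upgrade: since $\mathrm{div}\,u^0 = 0$ and $u^0 \cdot \vec{n} = 0$ on $\partial\Omega$,
\[ \|u^0\|_{m,p} \leq C\bigl(\|\omega\|_{m-1,p} + \|u^0\|_{m-1,p}\bigr), \]
and iterating this together with a basic $L^p$ energy bound on $u^0$ closes the loop. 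Setting $y(t) = \|u^0(t)\|_{m,p}^p$, one reaches a Riccati-type inequality $y'(t) \leq C\,y(t)^{1+\alpha}$, so the local version of Gronwall (Lemma 2.6(c)) yields existence of a time $\bar{T} = \bar{T}(\|u_0\|_{m,p}) > 0$ on which the $W^{m,p}$ norm stays bounded.

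With these uniform estimates in hand, I would pass to the inviscid limit by Aubin--Lions compactness, obtaining $u^0 \in L^\infty(0, \bar T; W^{m,p}(\Omega))$; continuity in time into $W^{m,p}$ follows from the transport structure via a standard Bona--Smith regularization argument, while membership in $C^1([0,\bar T]; W^{m-1,p}(\Omega))$ is immediate from the equation since $(u^0 \cdot \nabla)u^0 \in W^{m-1,p}(\Omega)$. The pressure is then recovered from the Neumann problem
\[ -\Delta \pi^0 = \mathrm{div}\bigl((u^0 \cdot \nabla) u^0\bigr) \ \text{in}\ \Omega, \qquad \partial_{\vec{n}}\pi^0 = -\bigl((u^0 \cdot \nabla) u^0\bigr)\cdot \vec{n}\ \text{on}\ \partial\Omega, \]
and elliptic regularity yields $\pi^0 \in L^\infty(0,\bar T; W^{m+1,p}(\Omega))$. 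Uniqueness follows from an $L^2$ energy estimate on the difference of two solutions, invoking the $W^{1,\infty}$ bound.

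The main obstacle I expect is closing the $W^{m,p}$ vorticity estimate for $p \neq 2$ on a bounded domain: Fourier-multiplier methods are unavailable, so one must rely on the intrinsic div-curl characterization supplied by Lemma 2.2 together with Moser-type product and commutator estimates tailored to $L^p$. A secondary delicate point is that $\omega$ carries no clean boundary condition from $u^0 \cdot \vec{n} = 0$ alone; however, this is harmless because the vorticity equation is of pure transport type and no boundary contribution appears in the energy estimate.
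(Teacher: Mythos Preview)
The paper does not prove this statement at all: its ``proof'' consists entirely of the citation ``See \cite{Temam} or \cite{BB}.'' Theorem 2.7 is treated as a known result from the literature (Temam; Bourguignon--Brezis), so there is nothing to compare your argument against within the paper itself.

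That said, a brief comment on your sketch. The vorticity--energy approach you outline is reasonable and is in the spirit of the classical proofs, and your use of Lemma~2.2 to close the div--curl loop is the right idea on a bounded domain. One point is ill-advised in context, however: you propose to construct $u^0$ as a vanishing-viscosity limit of Navier--Stokes with the slip condition (1.2). But uniform-in-$\nu$ $W^{m,p}$ control of that very system is not ``classical'' --- it is in fact what the present paper (and \cite{Mas}, \cite{xin}, \cite{BVC}) are working to establish, and in general no $H^2$-uniform bound holds on curved domains. Using it here as an input is circular. If you want a self-contained approximation, a Galerkin scheme, an artificial-viscosity regularization that preserves the tangency condition, or the particle-trajectory method of \cite{BB} would be safer; with any of these your a~priori estimate and the remaining steps (Aubin--Lions, Bona--Smith for strong continuity, Neumann recovery of the pressure) go through as you describe.
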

\begin{proof}
See \cite{Temam} or \cite{BB}.
\end{proof}

 Let $(Z_j)_{j=1\cdots N}$ be a set of generators of vector fields tangential to $\p\o$. For a multiindex $\beta$, $Z^\beta=Z_1^{\beta_1}\cdots Z_N^{\beta_N}$,  define
$$H_{co}^m(\o)=\{f\in L^2(\o): Z^\beta f\in L^2(\o) \,\,\mbox {for all }\,\, |\beta|\le m\}$$
with $$\|f\|^2_{H_{co}^m(\o)}=\sum_{|\beta|\le m}\|Z^\beta
f\|_{L^2(\o)}^2.$$ Similarly, one can define the space
$W^{m,\infty}_{co}$ and $f\in W^{m,\infty}_{co}$ if
$$\|f\|_{W^{m,\infty}_{co}(\o)}=\sum_{|\beta|\le m}\|Z^\beta f\|_{L^\infty(\o)}<\infty.$$
One can also define the space  $E^m$ by
$$E^m=\{u\in H_{co}^m(\o)|\nabla u\in H_{co}^{m-1}(\o) \}$$
with the obvious norm. The following important uniform
well-posedness results for solutions to (\ref{1.1}-\ref{NAB}) in the
above spaces are due to Masmoudi and Rousset [18].

\begin{Theorem} Let $m$ be an integer satisfying $m>6$ and $\Omega$ be a $C^{m+2}$ domain. Consider $u_0\in E^m\cap L_\sigma^2(\o)$ such that $\nabla u_0\in W^{1,\infty}_{co}(\o)$. Then there exists a positive constant $T$ such that for all sufficiently small $\nu$ there exists a unique solution, $u^\nu\in C([0,T]; E^m)$ to (\ref{1.1}, \ref{NAB}) such that $\|\nabla u^\nu\|_{1,\infty}$ is bounded on $[0,T]$. Moreover, there exists $C$ independent of $\nu$,  such that
$$\sup_{t\in [0,T]}(\|u^\nu(t)\|_{H_{co}^m(\o)}+\|\nabla u^\nu(t)\|_{H_{co}^{m-1}(\o)}+\|\nabla u^\nu(t)\|_{W^{1,\infty}_{co}(\o)})\le C.$$
\end{Theorem}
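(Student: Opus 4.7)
The plan is to run a conormal Sobolev energy method in the spirit of Masmoudi--Rousset. The structural ingredient that makes the argument work uniformly as $\nu\to 0$ is the Navier-slip condition $(\C\, u^\nu)\times\vec n=0$: it provides Dirichlet-type data on the tangential components of the vorticity $\omega^\nu=\C\, u^\nu$, so commuting tangential fields $Z_j$ with the viscous term produces only boundary contributions that can be controlled independently of $\nu$. I would set up
$$N_m(t)=\|u^\nu(t)\|_{H_{co}^m(\o)}^2+\|\nabla u^\nu(t)\|_{H_{co}^{m-1}(\o)}^2+\|\nabla u^\nu(t)\|_{W^{1,\infty}_{co}(\o)}^2,$$
derive a differential inequality of the form $N_m'(t)\le C(1+N_m(t))^{1+\alpha}$, and invoke Lemma 2.6(c) to obtain a uniform existence time $T>0$ depending only on $N_m(0)$. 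Combined with the classical $\nu$-dependent local well-posedness of (1.1)--(1.2) and a standard continuation argument, this yields the stated existence, uniqueness and uniform bound.

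For the conormal piece $\|u^\nu\|_{H_{co}^m}$, I would apply $Z^\beta$ with $|\beta|\le m$ to (1.1) and pair with $Z^\beta u^\nu$ in $L^2(\o)$. The transport commutators are bounded by $\|\nabla u^\nu\|_{W^{1,\infty}_{co}}\,\|u^\nu\|_{H_{co}^m}^2$; the pressure is handled by solving the Neumann problem $\Delta\pi^\nu=-\p_i u^\nu_j\p_j u^\nu_i$ with boundary data derived from $u^\nu\cdot\vec n=0$ and applying elliptic regularity in conormal norms. The viscous term yields $\nu\|\nabla Z^\beta u^\nu\|_2^2$ together with commutators $[Z^\beta,\Delta]u^\nu$ and boundary integrals; rewriting one normal derivative via Lemma 2.2 and using the slip condition together with Lemma 2.4, the boundary piece is absorbed in the parabolic gain. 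The bound on $\|\nabla u^\nu\|_{H_{co}^{m-1}}$ is then recovered from the vorticity equation $\p_t\omega^\nu+u^\nu\cdot\nabla\omega^\nu-\nu\Delta\omega^\nu=\omega^\nu\cdot\nabla u^\nu$, whose Dirichlet-type data $\omega^\nu\times\vec n=0$ makes the $H_{co}^{m-1}$ energy estimate work (the boundary term generated by $-\nu\Delta\omega^\nu$ is controlled by Lemma 2.4); Lemma 2.2 combined with $u^\nu\cdot\vec n=0$ then converts the bound on $\omega^\nu$ into one on $\nabla u^\nu$.

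The pointwise piece $\|\nabla u^\nu\|_{W^{1,\infty}_{co}}$ is the most delicate. The natural route is a transport-type maximum estimate on $Z^\alpha\omega^\nu$ for $|\alpha|\le 1$: the boundary data $\omega^\nu\times\vec n=0$ controls tangential traces, and the stretching term $\omega^\nu\cdot\nabla u^\nu$ together with lower-order pieces is estimated via a conormal Sobolev embedding, which demands $m>6$ in three dimensions. The main obstacle, and the reason this threshold arises, is the interplay between the viscous commutator $[Z^\beta,\nu\Delta]u^\nu$ and the conormal geometry near $\p\o$: these commutators contain terms with up to two normal derivatives, and they can only be absorbed into $\nu\|\nabla Z^\beta u^\nu\|_2^2$ after a careful local decomposition using the slip condition and the vorticity--divergence splitting, which is where most of the technical work is concentrated. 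Once all estimates are combined into the single inequality for $N_m(t)$, the conclusion follows by Lemma 2.6(c) and a classical bootstrap.
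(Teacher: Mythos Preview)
Your outline is a reasonable sketch of the Masmoudi--Rousset strategy, but the paper you are comparing against does not prove this theorem at all: it is stated as Theorem~2.8 and attributed directly to Masmoudi and Rousset~[18], with no argument given. So there is no ``paper's own proof'' to compare with; the authors simply quote the result as a black box to be used later (in particular for the uniform Lipschitz bound $\|\nabla u^\nu\|_{W^{1,\infty}_{co}}\le C$ in Sections~5 and~6).

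As for the content of your sketch: the broad architecture (conormal energy on $u^\nu$, vorticity estimates exploiting $\omega^\nu\times\vec n=0$, a separate $L^\infty$-type estimate for $\nabla u^\nu$, closure by a nonlinear Gronwall argument) is indeed what [18] does. A couple of points where your description is imprecise relative to the actual proof in [18]: the $W^{1,\infty}_{co}$ control of $\nabla u^\nu$ is not obtained by a pure maximum principle on the vorticity equation but rather through a combination of anisotropic Sobolev embeddings and an estimate on the normal component $\p_n u^\nu$ via a transport--diffusion equation near the boundary; and the pressure estimates in conormal spaces require a more delicate decomposition than a single Neumann problem, since the boundary data for $\pi^\nu$ involves $\p_t u^\nu\cdot\vec n$ and curvature terms that must themselves be controlled in $H^{m-1}_{co}$. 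These are the places where the bulk of the technical work in [18] lies, and your sketch passes over them rather lightly. But since the paper under review does not attempt any of this, your proposal goes well beyond what the paper provides.
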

From now on, the time $T$ is taken to be finite and fixed unless stated otherwise.

\section{Asymptotic expansions of the  solution and main result}

 \setcounter{equation}{0}
~~~~~~ In this section, we are going to study the asymptotic
expansions of the strong solution as in Theorem 2.8 and state the
main results in this paper.

First, choose a smooth function $\varphi\in
C^{\infty}(\mathbb{R}^3;\mathbb{R})$ such that  in a neighborhood
$\Lambda$ of $\p\o$, one has that
$\o\cap\Lambda=\{\varphi>0\}\cap\Lambda,~\,\o^c\cap\Lambda=\{\varphi<0\}\cap\Lambda,~\,\p\o\cap\Lambda=\{\varphi=0\}\cap\Lambda$
and it is normalized such that $|\nabla\varphi|=1$ for all
$x\in\Lambda.$  Thus $\varphi$ is regarded as a distance between $x$
and $\p\o$ for $x\in\Lambda$ without restriction. It is assumed that
$\Lambda=\{x\in\o:\varphi(x)<\eta\}$ for a small number $\eta>0.$ We
define a smooth extension of the normal unit vector $\vec{n}$ inside
$\Omega$ by taking $\vec{n}=\nabla\varphi.$

 As in \cite{WWX,IS}, the solution $u^\nu$ to (1.1)-(1.2) is expected to be described by the following ansatz:
 \begin{eqnarray}
 u^\nu(t,x)=u^0(t,x)+\sqrt{\nu} u^b(t,x,\frac{\varphi(x)}{\sqrt{\nu}})+\nu v(t,x,\frac{\varphi(x)}{\sqrt{\nu}})+\nu R^\nu(t,x);\label{3.1}\\
 \pi^\nu(t,x)=\pi^0(t,x)+\sqrt{\nu} p (t,x,\frac{\varphi(x)}{\sqrt{\nu}})+\nu q(t,x,\frac{\varphi(x)}{\sqrt{\nu}})+\nu \kappa(t,x)\label{3.2}.
 \end{eqnarray}
Plugging (3.1) and (3.2) into (1.1) leads to
\begin{eqnarray}
&&\p_z u^b\cdot \vec{n}=0; \label{3.3}\\
&&\di_x u^b=-\p_z v\cdot \vec{n}. \label{3.4}
\end{eqnarray}
Following the argument in \cite {WWX} shows easily that $p\equiv0.$ On the other hand,  the  term of the order $O(\sqrt{\nu})$ is
\begin{equation}\label{3.5}\tag{3.5}
\p_t u^b-\p_z^2 u^b+\frac{u^0\cdot \vec{n}}{\varphi(x)}z\p_z
u^b+u^0\cdot\nabla u^b+u^b\cdot\nabla u^0+u^b\cdot \vec{n}\p_z
u^b+\vec{n}\p_z q
\end{equation}

Modifying slightly the proof in \cite{IS}, one can prove that if $u^b(t,x,0)\cdot \vec{n}(x)=0$ and $u^b$ solves the following equations
\begin{equation}\label{3.6}\tag{3.6}
\begin{aligned}
\p_t u^b-\p_z^2 u^b+\frac{u^0\cdot \vec{n}}{\varphi(x)}z\p_z
u^b+(u^0\cdot\nabla u^b+u^b\cdot\nabla u^0)\times \vec{n}=0,
\end{aligned}
\end{equation}
then $u^b\cdot \vec{n}= 0$, for all $(t,x,z)\in (0,T)\times \Omega\times\mathbb{R}_+.$

Therefore, we can infer that $u^b$ satisfies the following system
\begin{equation}\label{3.7}\tag{3.7}
\left\{\begin{aligned}
&\p_t u^b-\p_z^2 u^b+\frac{u^0\cdot \vec{n}}{\varphi(x)}z\p_z u^b+(u^0\cdot\nabla u^b+u^b\cdot\nabla u^0)\times \vec{n}=0,\\
&(u^0\cdot\nabla u^b+u^b\cdot\nabla u^0)\cdot \vec{n}=\p_z q.
\end{aligned}\right.
\end{equation}
with boundary and initial conditions
\begin{equation}\label{3.8}\tag{3.8}
\left\{\begin{aligned}
&u^b\cdot \vec{n}=0,~\,\p_z u^b =-\C\,u^0\times\vec{n}, ~\, \mbox{on}~\, z=0,\\
& u^b(0,x,z)=0.
\end{aligned}\right.
\end{equation}
The following proposition follows essentially as in [12],
\begin{Proposition}
There exists a unique pair $(u^b,q)$ which solves (3.7)-(3.8) with the following
$$u^b\in L^\infty(0,T; H^{k,2,0})\cap L^2(0,T;H^{k,2,1})$$ for all $k\in\mathbb{N}$ and $\p_z u^b\in L^\infty((0,T)\times\Omega\times\mathbb{R}_+).$

Moreover, $u^b$ vanishes for $x$ outside the neighborhood $\Lambda$ and $u^b\cdot \vec{n}=0$ for all $(t,x,z)\in (0,T)\times \Omega\times\mathbb{R}_+.$ Consequently, it holds that
\begin{equation*}
\sup_{t\in [0,T]}\|u^\nu-u^0\|_{L^2(\Omega)}\le C\nu^{\frac{3}{4}},
\end{equation*}provided  that initial velocity $u_0\in H^3(\o).$
\end{Proposition}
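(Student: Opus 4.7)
The plan is to establish existence, uniqueness, and regularity of $(u^b,q)$ by solving (3.7)--(3.8) as a linear degenerate parabolic system parameterized by the slow variable $x$, and then to deduce the $L^2$-convergence rate from the ansatz (3.1). First, decompose $u^b=u^b_\tau+(u^b\cdot\vec{n})\vec{n}$ using the extended normal $\vec{n}=\nabla\varphi$. Projecting (3.7) onto $\vec{n}$, together with the zero initial condition and $u^b(t,x,0)\cdot\vec{n}=0$, yields a degenerate heat-type equation forcing $u^b\cdot\vec{n}\equiv 0$, as already noted in the discussion leading to the statement. The problem thus reduces to a linear system for $u^b_\tau$, whose principal operator is $\p_t-\p_z^2$, whose forcing is $-(u^0\cdot\nabla u^b_\tau+u^b_\tau\cdot\nabla u^0)\times\vec{n}$, and whose variable coefficient $\frac{u^0\cdot\vec{n}}{\varphi}z$ is uniformly bounded on $(0,T)\times\o\times\R_+$ since $u^0\cdot\vec{n}$ vanishes on $\p\o$ and $\varphi$ is a normalized defining function.

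Existence would be obtained via a Galerkin scheme in $x$, viewing the problem as a family of $1$D parabolic equations in $(t,z)$ parameterized by $x\in\o$. Testing against $u^b_\tau$ produces the basic bound $u^b\in L^\infty(0,T;L^2)\cap L^2(0,T;H^{0,0,1})$. To manufacture the polynomial weights $(1+z^{2k})^{1/2}$ featured in $H^{k,2,0}$, I would instead test against $(1+z^{2k})u^b_\tau$ and absorb the commutator terms into the parabolic dissipation via Young's inequality, exploiting that the inhomogeneity is concentrated at $z=0$ through the boundary condition $\p_z u^b|_{z=0}=-\C u^0\times\vec{n}$ and supported in $\Lambda$ in $x$. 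Higher tangential regularity would follow by commuting up to two $\p_x$ and one $\p_z$ derivatives with the system and closing an induction, using Theorem 2.7 for the regularity of $u^0$. The $L^\infty$-bound on $\p_z u^b$ comes from a maximum-principle argument applied to the equation for $\p_z u^b$ with its $L^\infty$ boundary datum $-\C u^0\times\vec{n}$. The pressure $q$ is then reconstructed by integrating $(u^0\cdot\nabla u^b+u^b\cdot\nabla u^0)\cdot\vec{n}=\p_z q$ from $z=+\infty$, with decay of $u^b$ guaranteeing $q\to 0$. Uniqueness follows from linearity by a standard energy argument on the difference, and the support statement $u^b=0$ outside $\Lambda$ is a consequence of that uniqueness and the support properties of the forcing.

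For the convergence rate, I would only use the ansatz (3.1) together with the bounds on $u^b$ just derived. Because $u^b$ is supported in $\Lambda$, the change of variables $z=\varphi(x)/\sqrt{\nu}$ in the normal direction gives
\begin{equation*}
\Big\|\sqrt{\nu}\,u^b\big(t,\cdot,\tfrac{\varphi}{\sqrt{\nu}}\big)\Big\|_{L^2(\o)}^2\le C\nu\cdot\sqrt{\nu}\,\|u^b(t)\|_{L^2(\Lambda\times\R_+)}^2\le C\nu^{3/2}.
\end{equation*}
A direct $L^2$ energy estimate on the residual $w^\nu:=u^\nu-u^0-\sqrt{\nu}\,u^b(\cdot,\varphi/\sqrt{\nu})$, in which the boundary condition $\p_z u^b|_{z=0}=-\C u^0\times\vec{n}$ from (3.8) is designed precisely to cancel the boundary contribution of $\nu\Delta u^0$ under the slip condition (1.2), then gives $\|w^\nu\|_{L^2}\le C\nu$ provided $u_0\in H^3(\o)$ (so that $u^0\in C([0,T];H^3)$ by Theorem 2.7). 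The triangle inequality yields the advertised rate $\nu^{3/4}$.

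The principal obstacle is the weighted higher regularity in $H^{k,2,0}$: the commutators between $\p_x$, the projector onto $T\p\o$, and the degenerate coefficient $u^0\cdot\vec{n}/\varphi$ generate lower-order contributions that can only be absorbed through a careful joint induction on the differentiation order and the weight exponent $k$, rather than by a single global estimate. Compatibility at $t=0$ is fortunately trivial because $u^b(0,\cdot,\cdot)\equiv 0$, so no initial-data correction is required and the Galerkin approximation starts cleanly.
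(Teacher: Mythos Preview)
Your sketch is essentially the Iftimie--Sueur argument that the paper invokes (the paper gives no independent proof here and simply refers to [12]), so the overall strategy is correct: reduce to a linear degenerate parabolic problem in $(t,z)$ via the normal/tangential decomposition, close weighted energy estimates by induction, get $\partial_z u^b\in L^\infty$ by a maximum-principle argument on the differentiated equation, and read off the $\nu^{3/4}$ rate from the ansatz.

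There is one point that, as written, would not close. You propose a ``direct $L^2$ energy estimate on the residual $w^\nu:=u^\nu-u^0-\sqrt{\nu}\,u^b(\cdot,\varphi/\sqrt{\nu})$''. But $w^\nu$ is \emph{not} divergence-free: by (3.3)--(3.4), $\mathrm{div}\big(\sqrt{\nu}\,u^b(\cdot,\varphi/\sqrt{\nu})\big)=\sqrt{\nu}\,\mathrm{div}_x u^b+\partial_z u^b\cdot\vec{n}=\sqrt{\nu}\,\mathrm{div}_x u^b$, so the pressure term cannot be killed when you test the equation for $w^\nu$ against $w^\nu$, and the estimate stalls. This is exactly why the corrector $\nu v$ appears in the ansatz (3.1): it absorbs the divergence defect via $\partial_z v\cdot\vec{n}=-\mathrm{div}_x u^b$, and the energy estimate must be run on $R^\nu$ (equation (5.1)), which is divergence-free up to a term that is itself $O(1)$ and handled by the Leray projector. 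Once you obtain $\sup_t\|R^\nu\|_{L^2}\le C$, your conclusion $\|w^\nu\|_{L^2}=\|\nu v+\nu R^\nu\|_{L^2}\le C\nu$ follows, and the triangle inequality with $\|\sqrt{\nu}\,u^b(\cdot,\varphi/\sqrt{\nu})\|_{L^2}\le C\nu^{3/4}$ gives the stated rate.
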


In this paper, we aim to improve the regularity of $u^b$ which is
the solution of problem (3.7) with (3.8) under the assumption of
higher order regularities for the initial velocity and to obtain our
main results. First, we have
\begin{Theorem}
Let $u_0\in H^s$ for $s\ge 6$, be a divergence free vector field
satisfying the boundary conditions (\ref{NAB}). Suppose $\Omega$ to
be a $C^{s+2}$ bounded domain. Assume that $u^\nu$ is the weak
solution to the Navier-Stokes equations (1.1) with initial velocity
$u_0$. Let $u^0$ be the smooth solution to the problem (1.3) with
the same initial data as in Theorem 2.8. Then there exists a unique
boundary layer profile $u^b$ solving (3.7)-(3.8) with the following
regularities for $p>2$
\begin{equation}\label{3.9}\tag{3.9}
\begin{aligned}
& u^b\in L^\infty(0,T; W^{k,s,0,p});\\
&u^b\in C([0,T]; H^{k,s-2,1})\cap L^\infty(0,T;H^{k,s-2,2}\cap H^{k,s-1,1})\cap L^2(0,T;H^{k,s-2,3});\\
&\p_t u^b\in L^\infty(0,T; H^{k,s-2,0})\cap L^2(0,T; H^{k,s-2,1}\cap H^{k,s-1,0}).
\end{aligned}
\end{equation}
Consequently, there exists $\nu_0>0$, small enough, such that for
all $0<\nu\le\nu_0$, it holds that for all $p\in (3,6]$,
\begin{equation*}
\sup_{t\in [0,T]}\|u^\nu-u^0\|_{p}\le C\nu^{\frac{1}{2}+\frac{1}{2p}},\\
\sup_{t\in [0,T]}\|\rr\|_p\le C.
\end{equation*}
Here $C$ is independent of $\nu.$
\end{Theorem}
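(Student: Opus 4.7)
The plan proceeds in three stages mirroring the statement.

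\textbf{Stage 1 (regularity of $u^b$).} I would extend Proposition 3.1 by differentiating the boundary layer system (3.7) in the tangential variables $x$ up to order $s$ and in $t$, together with $z$-weights of the form $(1+z^{2k})^{1/p}$, then performing energy estimates on the resulting equations. Theorem 2.7 gives $u^0\in C([0,T];H^s(\o))$ for $s\ge 6$, so every coefficient in (3.7) is smooth; the transport coefficient $(u^0\cdot\vec{n})/\varphi(x)$ is bounded in $\Lambda$ because $u^0\cdot\vec{n}$ vanishes on $\p\o$, and the degenerate term $\frac{u^0\cdot\vec{n}}{\varphi}z\p_zu^b$ is controlled by the $z$-weight together with Hardy's inequality (Lemma 2.5). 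Multiplying the differentiated system by the appropriate power of the unknown, integrating over $\o\times\mathbb{R}_+$, and closing the nonlinear terms via the embedding $H^{s-2}(\o)\hookrightarrow L^\infty(\o)$ and Gronwall's lemma delivers all three regularity classes asserted in (3.9). The boundary trace $\p_zu^b|_{z=0}=-\C\,u^0\times\vec{n}$ enters only as a smooth source on the scale of $u^0$.

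\textbf{Stage 2 (uniform $L^p$ bound on $\rr$).} Substituting (3.1)--(3.2) into (1.1)--(1.2) and using (3.3)--(3.8), the remainder $\rr$ satisfies a perturbed Navier--Stokes system of the form
\begin{equation*}
\p_t \rr-\nu\Delta \rr+(u^\nu\cdot\nabla)\rr+(\rr\cdot\nabla)u^0+\nabla\kappa=F^\nu,\qquad \di\,\rr=G^\nu,
\end{equation*}
with $\rr|_{t=0}=0$ and a slip-type boundary condition on $\p\o$ inherited from (1.2). By (3.9) and the rapid $z$-decay of the profiles, the residuals satisfy $\|F^\nu\|_{L^\infty(0,T;L^p(\o))}+\|G^\nu\|_{L^\infty(0,T;W^{1,p}(\o))}\le C$ for every $p\in(3,6]$. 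I would test this equation against $|\rr|^{p-2}\rr$: the convection $(u^\nu\cdot\nabla)\rr$ vanishes after integration by parts since $\di\,u^\nu=0$ and $u^\nu\cdot\vec{n}=0$ on $\p\o$; the boundary contribution from $-\nu\int_{\o}\Delta \rr\cdot|\rr|^{p-2}\rr\,\dif x$ is controlled via the curvature identity (Lemma 2.4), the trace inequality (Lemma 2.1), and the Hodge-type estimate (Lemma 2.3); and $(\rr\cdot\nabla)u^0$ is bounded by $\|\nabla u^0\|_\infty\|\rr\|_p^p$. Combining and invoking Gronwall (Lemma 2.6) yields $\sup_{t\in[0,T]}\|\rr\|_p\le C$ uniformly in $\nu$.

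\textbf{Stage 3 ($L^p$ convergence rate).} From the ansatz (3.1) one has
\begin{equation*}
\|u^\nu-u^0\|_p\le\sqrt{\nu}\,\|u^b(\cdot,\varphi/\sqrt{\nu})\|_p+\nu\,\|v(\cdot,\varphi/\sqrt{\nu})\|_p+\nu\,\|\rr\|_p.
\end{equation*}
Changing variables $z=\varphi(x)/\sqrt{\nu}$ in the tubular neighborhood $\Lambda$ and exploiting the $z$-decay of $u^b$ from (3.9), one obtains $\|u^b(\cdot,\varphi/\sqrt{\nu})\|_p^p\le C\sqrt{\nu}$, hence $\sqrt{\nu}\,\|u^b(\cdot,\varphi/\sqrt{\nu})\|_p\le C\nu^{1/2+1/(2p)}$; the same rescaling handles the $v$ contribution, and the $\nu\,\|\rr\|_p$ term is of higher order $O(\nu)$. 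Adding the three pieces gives the asserted convergence rate.

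\textbf{Main obstacle.} The critical step is the $L^p$ estimate on $\rr$ in Stage 2: unlike the $L^2$ case, the boundary integral produced by $L^p$ integration by parts of the viscous term is not sign-definite under the slip condition (1.2), and must be absorbed by combining the curvature identity in Lemma 2.4 with the trace inequality of Lemma 2.1 and the Hodge estimate of Lemma 2.3; this is precisely what forces the range $p\in(3,6]$, because beyond it the residual $F^\nu$, whose dominant ingredient is the tangential Laplacian $\Delta_x u^b$ evaluated on the fast scale $z=\varphi/\sqrt{\nu}$, ceases to be uniformly bounded under the regularity (3.9) obtained in Stage 1.
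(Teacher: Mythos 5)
Your Stages 1 and 3 track the paper: Stage 1 is essentially Section 4 (induction on tangential derivatives with weights $(1+z^{2k})$, the regularity of $f=\frac{u^0\cdot\vec{n}}{\varphi}$, then time derivatives and Gronwall), and Stage 3 is exactly the ansatz (3.1) combined with the rescaling estimate of Lemma 4.1, which yields the factor $\nu^{\frac{1}{2p}}$ and hence the rate $\nu^{\frac12+\frac1{2p}}$. The genuine gap is in Stage 2. Testing the remainder equation (5.1) against $|\rr|^{p-2}\rr$ does not dispose of the unknown pressure: the term $\int_\o\nabla\kappa\cdot|\rr|^{p-2}\rr\,\dif x$ survives, because $|\rr|^{p-2}\rr$ is neither divergence free nor tangent to $\p\o$ (recall $\di\,\rr=-\di_x v\neq0$ and $\rr\cdot\vec n=-v\cdot\vec n$ on $\p\o$ by (5.2)), and no a priori bound on $\kappa$ is available. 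This is precisely the obstruction the paper's Section 5 is built to circumvent: one decomposes $\rr=\P\rr+(I-\P)\rr$ via the Weyl/Helmholtz decomposition (Lemma 5.2), shows $(I-\P)\rr$ is uniformly bounded in $W^{1,p}$ (Lemma 5.3), and tests (5.1) with the projected function $\P(|\P\rr|^{p-2}\P\rr)=|\P\rr|^{p-2}\P\rr+\nabla Q$, which kills the pressure but brings in the potential $Q$ whose estimates (5.10) on $\nabla Q$ and $\nabla^2Q$ are then needed throughout the estimates of $d_1,d_2,d_3$ and $B_1$--$B_{19}$. Your proposal contains none of this machinery, and without it the $L^p$ estimate cannot be closed.

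Two further consequences of this omission: first, with the projected test function the convection term no longer vanishes; the part of $u^\nu$ equal to $\nu\rr$ produces the superlinear term $\nu\|\P\rr\|_p^{p\frac{p-1}{p-3}}$ in (5.22), so the estimate is closed with the \emph{local} Gronwall inequality (Lemma 2.6 c), and this is exactly where the smallness threshold $\nu_0$ in the statement of Theorem 3.2 originates --- your linear Gronwall argument offers no mechanism producing $\nu_0$. Second, the boundary term from the viscous part is not handled by the curvature identity of Lemma 2.4 (that lemma requires $\C u\times\vec n=0$ and is used only in Section 6, after the correction $R=\rr+b$); in Section 5 it is treated through the inhomogeneous boundary conditions (5.2)--(5.3), the trace inequality (Lemma 2.1) and Lemma 2.3, using that $\C\rr\times\vec n$ on $\p\o$ is expressed through $\C_x u^b$ and $\C_x v$ with a controllable $\nu^{-\frac12}$ factor. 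Also, the uniform boundedness of the forcing is not automatic: the singular term $\frac{1}{\sqrt\nu}u^0\cdot\vec n\,\p_z v$ is only controlled after rewriting it as $\frac{u^0\cdot\vec n}{\varphi}\,(z\,\di_x u^b)\big|_{z=\varphi/\sqrt\nu}$ and invoking Lemma 4.1 (estimate of $B_{13}$), a structural point your blanket claim on $F^\nu$ glosses over.
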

\begin{Remark}
In fact, it follows from Gagliado-Nirenberg inequality, Proposition 3.1 and  Theorem 2.8 that
\begin{equation*}
\sup_{t\in [0,T]}\|u^\nu-u^0\|_p\le
C\nu^{\frac{3}{10}+\frac{9}{10p}}
\end{equation*}
However, for $p>2$, the rate above is less than the one in Theorem 3.2.
\end{Remark}
Based on Theorem 3.2, the following main results can be proved.
\begin{Theorem}
Under the same assumptions in Theorem 3.2, there exists $\nu_0>0$,
suitably small, such that for all $0<\nu\le\nu_0$, it holds that
\begin{equation*}
\begin{aligned}
&\sup_{0\le t\le T}\|u^\nu-u^0\|_{H^1(\o)}\le C_1\nu^{\frac{1}{4}}, \\
&\sup_{0\le t\le T}\|u^\nu-u^0\|_\infty \le
C_2\nu^\frac{1}{2}
\end{aligned}
\end{equation*}
where $C_i$ is independent of $\nu$, $i=1,2.$
\end{Theorem}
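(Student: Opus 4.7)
The plan is to use the ansatz (3.1) to reduce the two statements to sharp bounds on the remainder $R^\nu$. A direct chain-rule computation, combined with the change of variable $z=\varphi(x)/\sqrt{\nu}$ in the normal direction to integrate out the boundary layer, yields
\begin{equation*}
\|\sqrt{\nu}\,u^b(\cdot,\varphi/\sqrt{\nu})\|_{H^1(\Omega)}=O(\nu^{1/4}),\qquad \|\nu v(\cdot,\varphi/\sqrt{\nu})\|_{H^1(\Omega)}=O(\nu^{3/4}),
\end{equation*}
while in $L^\infty$ the same terms are $O(\sqrt{\nu})$ and $O(\nu)$ respectively. It therefore suffices to prove the uniform bounds
\begin{equation*}
\|R^\nu\|_{H^1(\Omega)}\le C\nu^{-1/2}\quad\text{and}\quad \|R^\nu\|_{W^{1,p}(\Omega)}\le C\nu^{-1/2}\ \text{for some }p\in(3,6],
\end{equation*}
since $\nu\|R^\nu\|_{H^1}=O(\nu^{1/2})$ is then subleading to the $O(\nu^{1/4})$ boundary-layer contribution in $H^1$, and the Morrey embedding $W^{1,p}\hookrightarrow L^\infty$ for $p>3$ turns the second bound into $\nu\|R^\nu\|_\infty=O(\nu^{1/2})$, matching the $\sqrt{\nu}$ contribution of $u^b$ in $L^\infty$.

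To obtain the $H^1$ bound, I would first plug (3.1)--(3.2) into (1.1); using that $u^0$ solves (1.3), $u^b$ solves (3.7)--(3.8), and $v$ is chosen to cancel the next-order profile terms, the remainder satisfies an equation of the form
\begin{equation*}
\partial_t R^\nu - \nu\Delta R^\nu + (u^\nu\cdot\nabla)R^\nu + (R^\nu\cdot\nabla)\bar u + \nabla\kappa = \Phi^\nu,\qquad \bar u := u^0+\sqrt{\nu}u^b+\nu v,
\end{equation*}
where $\Phi^\nu$ collects the leftover terms of the expansion. The ansatz together with (3.3)--(3.4) gives $R^\nu\cdot\vec n=0$ on $\partial\Omega$ and $\|\mathrm{div}\,R^\nu\|_2\le C$. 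Lemma 2.2 then reduces the $H^1$ bound to controlling $\|\mathrm{curl}\,R^\nu\|_2$ and $\|R^\nu\|_2$, the latter being uniform thanks to the $L^p$ bound of Theorem 3.2 and the finiteness of $|\Omega|$. The decisive step is a curl-energy estimate: take $\mathrm{curl}$ of the remainder equation, pair with $\mathrm{curl}\,R^\nu$, and integrate over $\Omega$. The transport term drops out by $\nabla\cdot u^\nu=0$ and $u^\nu\cdot\vec n=0$; the stretching/reaction terms are absorbed via the uniform $\|\nabla u^\nu\|_\infty$ bound from Theorem 2.8; the viscous boundary integral $\nu\int_{\partial\Omega}(\partial_{\vec n}\mathrm{curl}\,R^\nu)\cdot\mathrm{curl}\,R^\nu\,\mathrm{d}S$ is controlled via Lemma 2.4 plus the boundary contribution of $\sqrt{\nu}u^b$; and the source $\Phi^\nu$ is estimated in $L^2$ using the newly established regularity (3.9) of $u^b$. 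Gronwall's inequality (Lemma 2.6) then delivers $\|\mathrm{curl}\,R^\nu\|_2\le C\nu^{-1/2}$.

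The $W^{1,p}$ bound with $p\in(3,6]$ is obtained by repeating the same curl-energy argument with the pairing $|\mathrm{curl}\,R^\nu|^{p-2}\mathrm{curl}\,R^\nu$ in place of $\mathrm{curl}\,R^\nu$. The viscous dissipation still produces a positive $\nu$-weighted term; the source $\Phi^\nu$ is bounded in $L^p$ using the $L^\infty(0,T;W^{k,s,0,p})$ regularity of $u^b$ from (3.9); the Theorem 3.2 uniform $L^p$-bound on $R^\nu$ controls the zeroth-order piece; and the Masmoudi--Rousset $L^\infty$-bound on $\nabla u^\nu$ again closes the nonlinear reaction. A Gronwall argument in $L^p$ then yields $\|\mathrm{curl}\,R^\nu\|_p\le C\nu^{-1/2}$, and combining with Lemma 2.2 gives $\|R^\nu\|_{W^{1,p}}\le C\nu^{-1/2}$, hence $\|\nu R^\nu\|_\infty=O(\nu^{1/2})$ by Morrey.

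The main obstacle will be the curl-energy estimate itself. First, one has to derive the precise boundary condition that $\mathrm{curl}\,R^\nu$ inherits (since $R^\nu$ itself does not carry the clean Navier-slip structure of $u^\nu$) and handle the viscous boundary integral via a careful use of Lemma 2.4 together with the boundary-layer correction coming from $\sqrt{\nu}u^b$. Second, the $\nu$-powers appearing in the source $\Phi^\nu$ must be tracked with care, since a purely algebraic bound via Theorem 2.8 and the ansatz alone would only yield $\|R^\nu\|_{H^1}=O(\nu^{-1})$, a full $\sqrt{\nu}$ worse than needed. The full gain of $\sqrt{\nu}$ that turns the trivial $O(\nu^{-1})$ into the sharp $O(\nu^{-1/2})$, and thereby makes the claimed $\nu^{1/4}$ and $\nu^{1/2}$ rates possible, comes entirely from the dissipative structure exploited in this energy estimate.
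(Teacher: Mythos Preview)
Your approach is essentially the same as the paper's, and you have correctly identified the main obstacle. Two points deserve sharpening.

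First, a small slip: the ansatz does \emph{not} give $R^\nu\cdot\vec{n}=0$ on $\partial\Omega$. From $u^\nu\cdot\vec{n}=u^0\cdot\vec{n}=u^b\cdot\vec{n}=0$ one obtains $R^\nu\cdot\vec{n}=-v(t,x,0)\cdot\vec{n}$ (this is the paper's (5.2)), and likewise $(\mathrm{curl}\,R^\nu)\times\vec{n}=-\frac{1}{\sqrt{\nu}}(\mathrm{curl}_x u^b)(t,x,0)\times\vec{n}-(\mathrm{curl}_x v)(t,x,0)\times\vec{n}$ (this is (5.3)). So neither boundary condition is clean for $R^\nu$; in particular Lemma~2.4 cannot be applied to $\omega=\mathrm{curl}\,R^\nu$ directly.

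Second, the paper resolves precisely the obstacle you flagged by the substitution
\[
R(t,x)=R^\nu(t,x)+b(t,x),\qquad b(t,x)=\tfrac{1}{\sqrt{\nu}}u^b(t,x,0)+v(t,x,0),
\]
which is designed so that $R\cdot\vec{n}=0$ and $(\mathrm{curl}\,R)\times\vec{n}=0$ hold exactly on $\partial\Omega$. One then runs the curl-energy estimate on $\omega=\mathrm{curl}\,R$ rather than on $\mathrm{curl}\,R^\nu$; Lemma~2.4 now applies cleanly to the viscous boundary integral, and the price paid is a handful of extra source terms involving $b$ and its derivatives, all of which are controlled at the level $O(\nu^{-1/2})$ using the improved regularity (3.9) of $u^b$ (note $\|b\|_{H^1}\le C\nu^{-1/2}$). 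Everything else---the use of $\|\nabla u^\nu\|_\infty$ from Theorem~2.8, Gronwall, Lemma~2.2 to pass from $\|\mathrm{curl}\,R\|_2$ to $\|R\|_{1,2}$, and the $L^p$ repetition for the $L^\infty$ bound---proceeds exactly as you outlined.
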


\section{Estimates of boundary layers }
In this section, we will derive the main regularity estimates for the first order boundary layer profile $u^b$, including  $L^p-$estimates in $(x,z)$ and the estimates of time regularity.

The following lemmas will be applied in the rest of paper.

We start with some elementary estimates.

\begin{Lemma}
There exists a constant $C$ independent of $\nu$ such that for all $v\in L^p_z(\mathbb{R}_+;W_x^{2,p}(\o))=W^{0,2,0,p}(\Omega\times\mathbb{R}_+),~ p>1$ which vanishes for $x$ outside the neighborhood $\Lambda$ of $\p\o$,
\begin{alignat}{12}
\|v(x,\frac{\varphi(x)}{\sqrt{\nu}})\|_{p}\le
C\nu^{\frac{1}{2p}}\|v\|_{0,1,0,p},
\end{alignat}
where $\varphi(x)$ is the smooth function defined as in section 3.
 \end{Lemma}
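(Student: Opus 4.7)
The plan is to reduce the estimate to a rescaled trace-type inequality in the boundary tube. Since $v(x,z)$ vanishes for $x$ outside $\Lambda$, the integral concentrates in the tubular neighborhood of $\p\o$. I would introduce tubular coordinates on $\Lambda$: write $x=(y,\varphi)$, where $y$ is the normal projection of $x$ onto $\p\o$ and $\varphi=\varphi(x)$ is the distance to the boundary, so that $\dif x = J(y,\varphi)\,\dif y\,\dif\varphi$ with $J$ smooth and uniformly bounded above on $\Lambda$. Then
$$\|v(x,\varphi(x)/\sqrt{\nu})\|_p^p = \int_{\p\o}\int_0^\eta |v((y,\varphi),\varphi/\sqrt{\nu})|^p J(y,\varphi)\,\dif\varphi\,\dif y.$$

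Next I would rescale the normal variable via $\varphi=\sqrt{\nu}\,t$, which pulls out the desired factor of $\sqrt{\nu}$:
$$\|v(x,\varphi(x)/\sqrt{\nu})\|_p^p = \sqrt{\nu}\int_0^{\eta/\sqrt{\nu}}\int_{\p\o} |v((y,\sqrt{\nu}\,t),t)|^p J(y,\sqrt{\nu}\,t)\,\dif y\,\dif t.$$
The core step is a slice trace inequality, uniform in $t$. For each fixed $t$, using that $v(\cdot,t)$ vanishes at $\varphi=\eta$, the fundamental theorem of calculus gives
$$|v((y,\sqrt{\nu}\,t),t)|^p = -\int_{\sqrt{\nu}\,t}^\eta \p_\varphi |v((y,\varphi),t)|^p\,\dif\varphi \le p\int_0^\eta |v|^{p-1}|\p_\varphi v|\,\dif\varphi,$$
and then Young's inequality together with the boundedness of $J$ yields
$$\int_{\p\o} |v((y,\sqrt{\nu}\,t),t)|^p\,\dif y \le C\bigl(\|v(\cdot,t)\|_p^p+\|\nabla_x v(\cdot,t)\|_p^p\bigr),$$
with $C$ independent of $t$ and $\nu$.

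Finally, integrating this bound in $t\in(0,\eta/\sqrt{\nu})$ and applying Fubini identifies the right-hand side with $\|v\|_{0,1,0,p}^p$; combined with the prefactor $\sqrt{\nu}$ and a $p$-th root extraction, one obtains $\|v(x,\varphi(x)/\sqrt{\nu})\|_p \le C\nu^{1/(2p)}\|v\|_{0,1,0,p}$, as claimed. There is no deep obstacle in this lemma; the main technical point is the uniform slice trace bound, which is precisely what forces the right-hand side to involve a full spatial derivative rather than just the $L^p$ norm. The hypothesis $v\in W^{0,2,0,p}$ is used only to ensure the pointwise restriction of $v$ to the surface $z=\varphi(x)/\sqrt{\nu}$ is unambiguous, after which a routine density argument handles the general case.
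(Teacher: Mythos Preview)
Your argument is correct and is precisely the standard tubular-coordinate plus rescaling proof that the paper is invoking when it cites Lemma~3 of \cite{IS}; the paper does not give any further details beyond that reference. The only point worth making explicit is that after the slice trace bound you integrate over $t\in(0,\eta/\sqrt{\nu})$ and then majorize by the full integral over $t\in(0,\infty)$ to recover $\|v\|_{0,1,0,p}^p$, which you implicitly do.
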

\begin{proof}
 (4.1) follows from a similar argument for Lemma 3 in
\cite{IS}.
\end{proof}
\begin{Lemma}
Let $u_0\in W^{s+1,p}(\o)$ with $u_0\cdot\vec{n}=0$, then
$f(x,t)=\frac{u^0\cdot \vec{n}}{\varphi(x)}\in C([0,T];
W^{s,p}(\Omega))\cap C^1([0,T]; W^{s-1,p}(\o)).$
\end{Lemma}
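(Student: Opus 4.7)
The plan is to exploit the factorization $u^0\cdot\vec n=\varphi\, g$, available because both factors vanish on $\partial\Omega$ and $\varphi$ does so transversally ($|\nabla\varphi|=1$ there), so that dividing by $\varphi$ costs exactly one spatial derivative. First, applying Theorem 2.7 with $m=s+1$ to $u_0\in W^{s+1,p}(\Omega)$ (together with the usual divergence-free and boundary hypotheses carried over from the main theorem) yields
$$u^0\in C([0,T];W^{s+1,p}(\Omega))\cap C^1([0,T];W^{s,p}(\Omega)),$$
and since $\vec n=\nabla\varphi$ is smooth on $\overline\Omega$, the scalar $h(t,x):=u^0(t,x)\cdot\vec n(x)$ inherits the same regularity. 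The inviscid boundary condition in (1.3) gives $h|_{\partial\Omega}\equiv 0$ for all $t$, hence also $\partial_t h|_{\partial\Omega}\equiv 0$.

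Next, in the tubular neighborhood $\Lambda$ the map $(\sigma,z)\in\partial\Omega\times[0,\eta)\mapsto \sigma+z\vec n(\sigma)$ is a smooth diffeomorphism onto $\Lambda\cap\overline\Omega$ under which $\varphi$ coincides with the coordinate $z$. The fundamental theorem of calculus then gives, for $x\in\Lambda\cap\overline\Omega$,
$$h(t,x)=\varphi(x)\int_0^1 \bigl(\vec n(\sigma)\cdot\nabla h\bigr)(t,\sigma+\tau\varphi(x)\vec n(\sigma))\, d\tau=:\varphi(x)\, g(t,x),$$
and the smoothness of the parameterization combined with Step 1 yields $g\in C([0,T];W^{s,p}(\Lambda))\cap C^1([0,T];W^{s-1,p}(\Lambda))$. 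To globalize, I fix a cutoff $\chi\in C_c^\infty(\Lambda)$ with $\chi\equiv 1$ on $\Lambda':=\{x\in\Omega:\varphi(x)<\eta/2\}$; then $\varphi\ge\eta/2$ wherever $1-\chi$ is nonzero, so $\psi:=(1-\chi)/\varphi$ is smooth on $\overline\Omega$. Writing
$$f=\frac{h}{\varphi}=\chi g+\psi h,$$
both summands inherit the regularity $C([0,T];W^{s,p}(\Omega))\cap C^1([0,T];W^{s-1,p}(\Omega))$: the first from $g$ and the smoothness of $\chi$, the second because $h$ has one more derivative than required and $\psi$ is smooth.

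The only delicate step is the division yielding $g$ with sharp loss of exactly one derivative; the explicit integral representation together with the cutoff argument cleanly bounds $\|g\|_{W^{s,p}}$ by $\|\nabla h\|_{W^{s,p}}\le C\|u^0\|_{W^{s+1,p}}$, and a symmetric argument gives the time-regularity control. Indeed, repeating the same factorization applied to $\partial_t h\in C([0,T];W^{s,p}(\Omega))$ (which also vanishes on $\partial\Omega$) yields $\partial_t f=(\partial_t h)/\varphi\in C([0,T];W^{s-1,p}(\Omega))$, completing the argument.
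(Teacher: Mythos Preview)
Your proof is correct and supplies precisely the standard argument the paper omits: the paper's own proof merely says ``this conclusion follows by modifying slightly the proof of Lemma~4 in \cite{IS}'' without further detail. Your approach---Hadamard's integral representation $h=\varphi\int_0^1(\vec n\cdot\nabla h)\,d\tau$ in tubular coordinates combined with a cutoff to handle the interior---is exactly the mechanism underlying such division lemmas, and is almost certainly what the cited reference does as well; the one implicit hypothesis you should make explicit is $s+1>1+3/p$, needed to invoke Theorem~2.7.
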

\begin{proof}
This conclusion follows by modifying sightly the proof of Lemma 4 in \cite{IS}.
\end{proof}

The following proposition shows the $L^p$-estimates of the higher
order derivative in the $x$-variable for the boundary layer profile
$u^b$.
\begin{Proposition}
Let $2\le p<\infty$ and $k\geq 1$. If $u_0\in W^{m+3,p}(\Omega)$
with $\nabla\cdot u_0=0$ and $ u_0\cdot \vec{n}=0, \C  u_0\times
\vec{n}=0 $ on $\p\o$,  then
\begin{equation}
u^b\in L^\infty(0,T; W^{k,m,0,p}(\o\times\R_+)).
\end{equation}
\end{Proposition}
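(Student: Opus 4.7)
The plan is to prove the proposition by induction on $m$ through a weighted $L^p$ energy estimate applied to tangential $x$-derivatives of $u^b$. Fix $k\geq 1$. For each multiindex $\alpha$ with $|\alpha|\leq m$, set $w_\alpha:=\p_x^\alpha u^b$; applying $\p_x^\alpha$ to the evolution equation in (3.7) yields
\begin{equation*}
\p_t w_\alpha-\p_z^2 w_\alpha+\tfrac{u^0\cdot\vec{n}}{\varphi}z\p_z w_\alpha+(u^0\cdot\nabla w_\alpha+w_\alpha\cdot\nabla u^0)\times\vec{n}=F_\alpha,
\end{equation*}
where $F_\alpha$ collects commutators of $\p_x^\alpha$ with the smooth coefficients $u^0$, $\nabla u^0$, $\vec{n}=\nabla\varphi$ and $\frac{u^0\cdot\vec{n}}{\varphi}$. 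By Theorem 2.7 these coefficients (together with $u^0$ itself) lie in $C([0,T];W^{m+2,p}(\o))$, and by Lemma 4.2 the singular-looking quotient $\frac{u^0\cdot\vec{n}}{\varphi}$ belongs to $C([0,T];W^{m+1,p}(\o))$; the lower-order $x$-derivatives of $u^b$ appearing in $F_\alpha$ are controlled by the inductive hypothesis. Hence $F_\alpha$ is bounded in the weighted $L^p$-norm defining $W^{k,0,0,p}$. (For the base case $m=0$, $F_\alpha\equiv 0$ and the same argument applies directly.)

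The main step is to multiply the equation for $w_\alpha$ by $(1+z^{2k})|w_\alpha|^{p-2}w_\alpha$ and integrate over $\o\times\R_+$. The time-derivative term produces $\frac{1}{p}\frac{\dif}{\dif t}\iint(1+z^{2k})|w_\alpha|^p\,\dif x\,\dif z$. Integration by parts in $z$ on the diffusive term yields positive dissipation in $\p_z w_\alpha$ up to a weight-commutator of the form $\iint 2kz^{2k-1}|w_\alpha|^{p-2}w_\alpha\cdot\p_z w_\alpha$, absorbable by Young's inequality into the dissipation and $\iint(1+z^{2k-2})|w_\alpha|^p$. The $z$-transport $\frac{u^0\cdot\vec{n}}{\varphi}z\p_z w_\alpha$, after a further integration by parts in $z$, contributes a term bounded by $C\iint(1+z^{2k})|w_\alpha|^p$ because $\frac{u^0\cdot\vec{n}}{\varphi}\in L^\infty$ (Lemma 4.2) and $\p_z[(1+z^{2k})z]$ is pointwise comparable to $1+z^{2k}$. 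The horizontal transport $u^0\cdot\nabla w_\alpha$, paired with $(1+z^{2k})|w_\alpha|^{p-2}w_\alpha$ and integrated by parts in $x$, generates no boundary integral on $\p\o$ since $u^0\cdot\vec{n}=0$ there, and the stretching $w_\alpha\cdot\nabla u^0$ is controlled directly by $\|\nabla u^0\|_\infty$.

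The principal obstacle is the non-vanishing Neumann trace at $z=0$: integration by parts of $-\p_z^2$ produces the boundary integral $\int_\o|w_\alpha(x,0)|^{p-2}w_\alpha(x,0)\cdot\p_x^\alpha(\C u^0\times\vec{n})(x)\,\dif x$, where I have used the boundary condition $\p_z u^b|_{z=0}=-\C u^0\times\vec{n}$ from (3.8). To control this, I would use a one-dimensional trace identity in $z$, writing
\begin{equation*}
|w_\alpha(x,0)|^p=-\int_0^\infty\p_z\bigl(\chi(z)|w_\alpha(x,z)|^p\bigr)\,\dif z
\end{equation*}
for a smooth decreasing cutoff $\chi$ with $\chi(0)=1$, and then applying Young's inequality with exponents $p$ and $p/(p-1)$ to bound the resulting trace by $C\iint(1+z^{2k})|w_\alpha|^p$ plus a small multiple of the dissipation, the latter being absorbed. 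This is where the weight $(1+z^{2k})$ and the dissipation cooperate most delicately. Collecting all contributions produces a differential inequality of the form
\begin{equation*}
\frac{\dif}{\dif t}\iint_{\o\times\R_+}(1+z^{2k})|w_\alpha|^p\,\dif x\,\dif z\leq C\Bigl(1+\iint_{\o\times\R_+}(1+z^{2k})|w_\alpha|^p\,\dif x\,\dif z\Bigr),
\end{equation*}
and since $w_\alpha|_{t=0}=0$ by (3.8), Gronwall's lemma (Lemma 2.6(a)) closes the induction and yields the claimed $u^b\in L^\infty(0,T;W^{k,m,0,p}(\o\times\R_+))$ bound.
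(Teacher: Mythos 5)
Your overall strategy---induction on $m$, a weighted $L^p$ energy estimate for $w_\alpha=\p_x^\alpha u^b$, the trace-in-$z$ treatment of the Neumann data at $z=0$, and Gronwall---is the same as the paper's, and your base case and boundary-term handling are essentially the paper's estimate of $\mathbb{I}_4$. The inductive step, however, has a genuine gap: you cannot dispose of the commutators by asserting that ``$F_\alpha$ is bounded in the weighted $L^p$-norm by the inductive hypothesis.'' The commutator of $\p_x^\alpha$ with the transport term $\frac{u^0\cdot\vec{n}}{\varphi}\,z\,\p_z u^b$ consists of terms $\binom{\alpha}{\beta}\p_x^\beta f\, z\,\p_z\p_x^{\alpha-\beta}u^b$ with $\beta\neq 0$, which contain a $\p_z$-derivative of lower-order $x$-derivatives of $u^b$. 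The induction hypothesis only gives $u^b\in L^\infty(0,T;W^{k,m-1,0,p})$, i.e.\ no $L^p$ control of any $\p_z$-derivative, so these terms are not bounded as you claim. In the paper this is precisely the estimate of $I_{32}$: one integrates by parts in $z$ to move $\p_z$ onto $|w_\alpha|^{p-2}w_\alpha$, absorbs the resulting factor $\p_z\p_x^\alpha u^b$ into the dissipation $\iint(1+z^{2k})|\p_z\p_x^\alpha u^b|^2|w_\alpha|^{p-2}\,\dif x\,\dif z$ by Young's inequality, and pays with a larger weight index ($k+\frac{p}{2}$) in the lower-order norms, which is harmless only because the statement is proved for every $k$ simultaneously. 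Your plan, as written, has no mechanism for these terms.

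Second, you ignore the projection structure of the convection term. The equation carries $(u^0\cdot\nabla u^b+u^b\cdot\nabla u^0)\times\vec{n}$, and after applying $\p_x^\alpha$ the pairing of the top-order piece with $(1+z^{2k})|w_\alpha|^{p-2}w_\alpha$ is not simply $\iint(1+z^{2k})\,u^0\cdot\nabla\bigl(\tfrac{1}{p}|w_\alpha|^p\bigr)\,\dif x\,\dif z$: besides the benign commutator $D_x^{m-1}$ coming from (4.10), one is left with the normal-component term involving $[(u^0\cdot\nabla w_\alpha)\cdot\vec{n}]\,(w_\alpha\cdot\vec{n})|w_\alpha|^{p-2}$, which contains $m+1$ $x$-derivatives of $u^b$ and is controlled neither by the current energy nor by the inductive hypothesis. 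The paper closes this (the term $J_{32}$) through the structural fact $u^b\cdot\vec{n}\equiv 0$, which gives $\p_x^\alpha u^b\cdot\vec{n}=-D_x^{m-1}(u^b)$; this lowers the order of the offending factors (the gradient factor drops from order $m+1$ to $m$, the normal component to $m-1$), after which the term is handled like the others. Without this observation, or some substitute for it, your differential inequality does not close, so the proposal as it stands does not prove the proposition even though its skeleton matches the paper's.
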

\begin{proof}
This can be verified by induction. Set $g(x,t)=\C\,u^0\times\vec{n}$. Then $$g\in C([0,T]; W^{m+2,p}(\o))\cap C^1([0,T]; W^{m+1,p}(\o)).$$

At first, we consider the case $m=0.$ Multiply  (\ref{3.6}) by
$(1+z^{2k})|u^b|^{p-2}u^b$ and integrate in $x$ and $z$ to obtain
\begin{equation}\label{4.3}\tag{4.3}
\begin{aligned}
&\frac{1}{p}\frac{\mathrm{d}}{\mathrm{d}t}\iint_{\Omega\times
\R_+}(1+z^{2k})|u^b|^p\mathrm{d}x\mathrm{d}z+\iint_{\Omega\times
\R_+}(1+z^{2k})u^b\cdot\nabla
u^0|u^b|^{p-2}u^b\mathrm{d}x\mathrm{d}z+\\
&\iint_{\Omega\times R_+}(1+z^{2k})u^0\cdot\nabla_x
u^b|u^b|^{p-2}u^b\mathrm{d}x\mathrm{d}z+\iint_{\Omega\times
\R_+}(z+z^{2k+1})f\cdot\partial_zu^b|u^b|^{p-2}u^b\mathrm{d}x\mathrm{d}z\\
&-\iint_{\Omega\times
\R_+}(1+z^{2k})\partial_z^2u^b|u^b|^{p-2}u^b\mathrm{d}x\mathrm{d}z=0.\nonumber
\end{aligned}\end{equation}
Since $\nabla\cdot u^0=0$ and $u^0\cdot \vec{u}=0$ on
$\partial\Omega$, the third term on the left hand side vanishes.
Integrating by parts with respect to $z$ to the last term and using
(3.8) and the decay property of $u^0$ due to Proposition 3.1 yield
\begin{equation}\label{4.4}\tag{4.4}
\begin{aligned}
&\frac{1}{p}\frac{d}{dt}\|u^b\|^p_{k,0,0,p}+\iint_{\Omega\times
\R_+}(1+z^{2k})|\partial_zu^b|^2|u^b|^{p-2}\mathrm{d}x\mathrm{d}z\\
\leq &-\iint_{\Omega\times
\R_+}2kz^{2k-1}\partial_zu^b|u^b|^{p-2}u^b\mathrm{d}x\mathrm{d}z-\iint_{\Omega\times
\R_+}(1+z^{2k})u^b\cdot\nabla u^0|u^b|^{p-2}u^b\mathrm{d}x\mathrm{d}z\\
&-\frac{1}{p}\iint_{\Omega\times
\R_+}(1+(2k+1)z^{2k})f|u^b|^p\mathrm{d}x\mathrm{d}z
+\int_{\o}g(x,t)|u^b(x,t,0)|^{p-2}|u^b(x,t,0)|\dif x\\
\equiv & \mathbb{I}_1+\mathbb{I}_2+\mathbb{I}_3+\mathbb{I}_4.
\end{aligned}
\end{equation}
 Young's inequality implies that
\begin{equation}\label{4.5}\tag{4.5}
|\mathbb{I}_1|\leq \e\iint_{\Omega\times
\R_+}(1+z^{2k})|\partial_zu^b|^2|u^b|^{p-2}\mathrm{d}x\mathrm{d}z+\iint_{\Omega\times
\R_+}(1+z^{2k})|u^b|^{p}\mathrm{d}x\mathrm{d}z.
\end{equation}
Due to the regularity of $u^0$ and $f$, one can get that
\begin{equation}\label{4.6}\tag{4.6}
|\mathbb{I}_2|+|\mathbb{I}_3|\leq C\iint_{\Omega\times
\R_+}(1+z^{2k})|u^b|^{p}\mathrm{d}x\mathrm{d}z.
\end{equation}
The term $\mathbb{I}_4$ can be estimated as follows:
\begin{equation}\label{4.7}\tag{4.7}
\begin{aligned}
&|\mathbb{I}_4|\le \|g\|_p\left(\int_\o |u^b(x,t,0)|^p\dif x\right)^{\frac{p-1}{p}}\le C\|g\|_p\left(\iint_{\o\times\mathbb{R}_+ }|\p_z u^b||u^b|^{p-2}u^b\dif x\dif z\right)^{\frac{p-1}{p}}\\
&\le C\|g\|_p\left(\iint_{\o\times\mathbb{R}_+ }(1+z^{2k})|\p_z u^b|^2|u^b|^{p-2}\dif x\dif z\right)^{\frac{p-1}{2p}}\left(\iint_{\o\times\mathbb{R}_+ }(1+z^{2k})|u^b|^p\dif x\dif z\right)^{\frac{p-1}{2p}}\\
&\le \e\iint_{\Omega\times
R_+}(1+z^{2k})|\partial_zu^b|^2|u^b|^{p-2}\mathrm{d}x\mathrm{d}z+C\|g\|_p^p+C \iint_{\Omega\times
R_+}(1+z^{2k})|u^b|^{p}\mathrm{d}x\mathrm{d}z.
\end{aligned}
\end{equation}
Then, for $\e=\frac{1}{2}$, one gets from (4.4)-(4.7) that
\begin{equation*}
\frac{1}{p} \frac{\mathrm{d}}{\mathrm{d}t}\|u^b\|^p_{k,0,0,p}\leq C
\|u^b\|^p_{k,0,0,p}+ C.
\end{equation*}
Thus Gronwall's Lemma yields $$\sup_{t\in(0,T)}\|u^b\|_{k,0,0,p}\le
C,$$ which proves (4.2) for $m=0$.

Assume that when $s\leq m-1$, it holds that $u^b\in L^\infty(0,T; W^{k,s,0,p}(\Omega)),$ when
$u_0\in W^{s+3,p}$. Next we verify that for $s=m$, $u^b\in L^\infty(0,T; W^{k,s,0,p}(\Omega))$ holds
true.

To this end, one can apply the operator $\p^\alpha_x$ to (3.6) with $|\alpha|=m$, multiply the resulting identity by
$(1+z^{2k})|\partial^\alpha_xu^b|^{p-2}\partial^\alpha_xu^b$, and integrate in $x$ and $z$ to obtain
\begin{equation}\label{4.8}\tag{4.8}
\begin{aligned}
&\frac{1}{p}\frac{d}{dt}\|u^b\|^p_{k,m,0,p}=\iint_{\Omega\times
\R_+}(1+z^{2k})\partial^2_z\partial_x^\alpha u^b|\partial_x^\alpha
u^b|^{p-2}\partial_x^\alpha u^b\mathrm{d}x\mathrm{d}z \\
&-\iint_{\Omega\times
\R_+}(1+z^{2k})\partial_x^\alpha [(u^b\cdot\nabla u^0+u^0\cdot\nabla_x u^b)\times \vec{n}]|\partial_x^\alpha u^b|^{p-2}\partial_x^\alpha u^b\mathrm{d}x\mathrm{d}z\\
&-\iint_{\Omega\times\R_+}(1+z^{2k})\partial_x^\alpha
(fz\partial_zu^b)|\partial_x^\alpha u^b|^{p-2}\partial_x^\alpha
u^b\mathrm{d}x\mathrm{d}z=I_1+I_2+I_3.
\end{aligned}
\end{equation}
To estimate $I_1$, we integrate by parts with respect to $z$ to get
\begin{equation*}
\begin{aligned}
&I_1\leq-\iint_{\Omega\times \R_+}(1+z^{2k})(\partial_z\partial_x^\alpha
u^b)|\partial_x^\alpha u^b|^{p-2}\partial_z\partial_x^\alpha u^b\mathrm{d}x\mathrm{d}z\\
&-\iint_{\Omega\times \R_+}2kz^{2k-1}(\partial_z\partial_x^\alpha
u^b)|\partial_x^\alpha
u^b|^{p-2}\partial_x^\alpha u^b\mathrm{d}x\mathrm{d}z\\
&+\int_\o\p_z(\partial^\alpha_x u^b)|\partial_x^\alpha
u^b|^{p-2}\partial_x^\alpha u^b|_{z=0}\dif x\\
&=-\iint_{\Omega\times \R_+}(1+z^{2k})|\partial_z\partial_x^\alpha
u^b|^2|\partial_x^\alpha
u^b|^{p-2}\mathrm{d}x\mathrm{d}z-I_{1_1}+I_{1_2}.\nonumber
\end{aligned}\end{equation*}
It follows from Young's inequality that\\
\begin{equation*}|I_{1_1}|\leq\e\iint_{\Omega\times
\R_+}(1+z^{2k})|\partial_z\partial_x^\alpha u^b|^2|\partial_x^\alpha
u^b|^{p-2}\mathrm{d}x\mathrm{d}z+C\iint_{\Omega\times
R_+}(1+z^{2k})|\partial_x^\alpha u^b|^{p}\mathrm{d}x\mathrm{d}z
\end{equation*}
Since $\p_z(\partial^\alpha_x u^b)|_{z=0}=\partial^\alpha_x g(x,t)$, by the same argument in the estimates of $\mathbb{I}_4$, one can get
\begin{equation*}
\begin{aligned}
&|I_{1_2}|\le \e\iint_{\Omega\times
\R_+}(1+z^{2k})|\partial_z\partial_x^\alpha u^b|^2|\partial_x^\alpha
u^b|^{p-2}\mathrm{d}x\mathrm{d}z\\
&+C\iint_{\Omega\times
R_+}(1+z^{2k})|\partial_x^\alpha u^b|^{p}\mathrm{d}x\mathrm{d}z+C\|\partial^\alpha_x g(x,t)\|^p_{p}.
\end{aligned}
\end{equation*}
Thus,
\begin{equation}\label{4.9}\tag{4.9}
I_1\leq-(1-2\e)\iint_{\Omega\times
\R_+}(1+z^{2k})|\partial_z\partial_x^\alpha u^b|^2|\partial_x^\alpha
u^b|^{p-2}\mathrm{d}x\mathrm{d}z+C\|u^b\|^p_{k,m,0,p}+C.
\end{equation}
Now we turn to $I_2$. It follows from direct calculations (see [12]) that
\begin{equation}\label{4.10}\tag{4.10}
\p_x^\alpha(u\times \vec{n})=(\partial_x^\alpha u)\times \vec{n}+D_x^{|\alpha|-1}(u),
\end{equation}
where $D^\alpha_x(u)$ denotes a linear combination of components of
$u$ and their derivatives with respect to $x$ of order $\leq |\alpha|$ with coefficients consisting of components of $n$ and its derivatives. Then $I_2$ can be rewritten as
\begin{equation}\label{4.11}\tag{4.11}
\begin{aligned}
&I_2=\iint_{\Omega\times \R_+}(1+z^{2k})\{[\partial_x^\alpha (u^b\cdot \nabla u^0+u^0\cdot\nabla_x u^b)]\times \vec{n}\\[5mm]
&+D_x^{m-1}(u^b\cdot\nabla u^0+u^0\cdot\nabla_x u^b)\}
\cdot|\partial_x^\alpha u^b|^{p-2}\partial_x^\alpha u^b\mathrm{d}x\mathrm{d}z\\[3mm]
&=\iint_{\Omega\times \R_+}(1+z^{2k})\{[\partial_x^\alpha (u^b\cdot \nabla u^0+u^0\cdot\nabla_x u^b)]+D_x^{m-1}(u^b\cdot\nabla u^0+u^0\cdot\nabla_x u^b)\\[5mm]
&-[\partial_x^\alpha(u^b\cdot\nabla u^0+u^0\cdot\nabla_x u^b)\cdot
\vec{n}]\vec{n}\}|\partial_x^\alpha u^b|^{p-2}\partial_x^\alpha
u^b\mathrm{d}x\mathrm{d}z=J_1+J_2+J_3.\end{aligned}\end{equation}

First, we assume that $m\ge 3$ and recall the Leibniz' formula
$$D^\alpha (uv)=\sum_{\beta\le\alpha}\binom{\alpha}{\beta}D^\beta uD^{\alpha-\beta}v,$$
where $D^\alpha=\frac{\p^{|\alpha|}}{\p x_1^{\alpha_1}\cdots\p x_n^{\alpha_n}}$, $\alpha=(\alpha_1\cdots\alpha_n)$ and $\binom{\alpha}{\beta}=\frac{\alpha!}{\beta!(\alpha-\beta)!}$, $\beta\le\alpha$ means $\beta_i\le\alpha_i(i=1,\cdots,n)$.
Then
\begin{equation*}
\begin{aligned}
&J_1=\iint_{\Omega\times \R_+}(1+z^{2k})[\partial_x^\alpha (u^b\cdot \nabla u^0+u^0\cdot\nabla_x u^b)]|\partial_x^\alpha u^b|^{p-2}\partial_x^\alpha
u^b\mathrm{d}x\mathrm{d}z\\
&=\sum_{\beta\le\alpha}\binom{\alpha}{\beta}\iint_{\o\times\R_+}(1+z^{2k})\left(\p_x^\beta u^b\cdot\nabla\p_x^{\alpha-\beta} u^0+\p_x^\beta u^0\cdot\nabla\p_x^{\alpha-\beta}u^b\right)|\partial_x^\alpha u^b|^{p-2}\partial_x^\alpha
u^b\mathrm{d}x\mathrm{d}z\\
&=\sum_{\beta\le\alpha}\binom{\alpha}{\beta} (J^\beta_{11} + J^\beta_{12}).
\end{aligned}
\end{equation*}
For the terms of $|\beta|\ge1$, one has
\begin{equation*}
\begin{aligned}
|J^\beta_{11}|= & \left|\iint_{\o\times\R_+}(1+z^{2k})\p_x^\beta u^b\cdot\nabla\p_x^{\alpha-\beta} u^0|\partial_x^\alpha u^b|^{p-2}\partial_x^\alpha
u^b\mathrm{d}x\mathrm{d}z\right|\\
\le & C\|\nabla\p_x^{\alpha-\beta} u^0\|_{L^\infty}\|(1+z^{2k})^{\frac{1}{p}}u^b\|_{|\beta|,p}\|(1+z^{2k})^{\frac{1}{p}}\p_x^\alpha u^b\|^{p-1}_{p}\\
\le & C\|u^0\|_{m+3,p}^p\|(1+z^{2k})^{\frac{1}{p}} u^b\|^p_{|\beta|,p}+\|(1+z^{2k})^{\frac{1}{p}}\p_x^\alpha u^b\|^{p}_{p}.
\end{aligned}
\end{equation*}
For the terms of $\beta=0$, one gets from Sobolev's imbedding that
\begin{equation*}
\begin{aligned}
|J^0_{11}| = &
\left|\iint_{\o\times\R_+}(1+z^{2k})u^b\cdot\nabla\p_x^{\alpha}
u^0|\partial_x^\alpha u^b|^{p-2}\partial_x^\alpha
u^b\mathrm{d}x\mathrm{d}z\right|\\
\le & C\|\nabla\p_x^{\alpha} u^0\|_{2p}\|(1+z^{2k})^{\frac{1}{p}} u^b\|_{2p}\|(1+z^{2k})^{\frac{1}{p}}\p_x^\alpha u^b\|^{p-1}_{p}\\
\le & C\|u^0\|_{m+3,p}^p\|(1+z^{2k})^{\frac{1}{p}} u^b\|^{p}_{1,p}+\|(1+z^{2k})^{\frac{1}{p}}\p_x^\alpha u^b\|^{p}_{p},
\end{aligned}
\end{equation*}
since $2p\le\frac{3p}{3-p}.$

 Since $u^0\cdot \vec{n}=0$ on $\p\Omega$ and $\di u^0=0$, it follows that
\begin{equation*}
\begin{aligned}
\left|\iint_{\o\times\R_+}(1+z^{2k}) u^0\cdot\nabla\p_x^{\alpha}u^b|\partial_x^\alpha u^b|^{p-2}\partial_x^\alpha
u^b\mathrm{d}x\mathrm{d}z\right|=0.
\end{aligned}
\end{equation*}
Next, the other terms of $J^\beta_{12}$ for $\beta\neq 0$ can be estimated as
\begin{equation*}
\begin{aligned}
|J^\beta_{12}| = & \left|\iint_{\o\times\R_+}(1+z^{2k}) \p_x^{\beta}u^0\cdot\nabla\p_x^{\alpha-\beta}u^b|\partial_x^\alpha u^b|^{p-2}\partial_x^\alpha
u^b\mathrm{d}x\mathrm{d}z\right|\\
\le & C\|u^0\|^p_{m+3,p}\|(1+z^{2k})^{\frac{1}{p}} u^b\|^p_{m-|\beta|+1,p}+\|(1+z^{2k})^{\frac{1}{p}}\p_x^\alpha u^b\|^{p}_{p}.
\end{aligned}
\end{equation*}
Therefore, we can conclude that
$$|J_1|\le C \left( \sum_{\beta\le\alpha,\beta>0}\binom{\alpha}{\beta} (||(1+z^{2k})^{\frac{1}{p}} u^b||^p_{|\beta|,p} + ||(1+z^{2k})^{\frac{1}{p}} u^b||^p_{m-|\beta|+1,p}) \right) + C\|(1+z^{2k})^{\frac{1}{p}}\p_x^\alpha u^b\|^{p}_{p}.$$
Note that
\begin{equation*}
\begin{aligned}
|J_3|= &\left|\iint_{\Omega\times \R_+}(1+z^{2k})[\partial_x^\alpha (u^b\cdot \nabla u^0+u^0\cdot\nabla_x u^b)\cdot \vec{n}]\vec{n}|\partial_x^\alpha u^b|^{p-2}\partial_x^\alpha
u^b\mathrm{d}x\mathrm{d}z\right|\\
= &\left|\iint_{\o\times\R_+}(1+z^{2k})\left[\left(\sum_{\beta\le\alpha}\binom{\alpha}{\beta}\left(\p_x^\beta u^b\cdot\nabla\p_x^{\alpha-\beta} u^0\right.\right.\right.\right.\\
&\left.+\p_x^\beta u^0\cdot \nabla\p_x^{\alpha-\beta}u^b\right)\left.\right)\cdot \vec{n}\left.\right]\vec{n}|\partial_x^\alpha u^b|^{p-2}\partial_x^\alpha
u^b\mathrm{d}x\mathrm{d}z\left.\right|\\
\le & J_{31}+\left| \iint_{\o\times\R_+}(1+z^{2k})\left[\left( u^0\cdot\nabla\p_x^{\alpha}u^b\right)\cdot \vec{n}\right]\vec{n}|\partial_x^\alpha u^b|^{p-2}\partial_x^\alpha
u^b\mathrm{d}x\mathrm{d}z \right|\\
\equiv & J_{31}+J_{32}.
\end{aligned}
\end{equation*}
$J_{31}$ can be estimated exactly as for $J_1$. To estimate $J_{32}$, noting that $u^b\cdot\vec{n}=0$, so $\p^\alpha_x u^b\cdot\vec{n}=-D^{m-1}_x(u^b)$, one can estimate $J_{32}$ as follow
\begin{equation*}
\begin{aligned}
|J_{32}| = & | \iint_{\o\times\R_+}(1+z^{2k})
(u^0\cdot\nabla(\p_x^{\alpha}u^b\cdot \vec{n})|\partial_x^\alpha
u^b|^{p-2}(\partial_x^\alpha
u^b\cdot \vec{n})\\
&+u^0\cdot\nabla \vec{n}\cdot u^b D^{m-1}_xu^b |\partial_x^\alpha u^b|^{p-2})\mathrm{d}x\mathrm{d}z |\\
\leq &\iint_{\o\times\R_+}(1+z^{2k}) (|u^0\cdot\nabla D^{m-1}_xu^b|\partial_x^\alpha u^b|^{p-2}(\partial_x^\alpha u^b\cdot \vec{n})|\\
&+|u^0\cdot\nabla \vec{n}\cdot u^b D^{m-1}_xu^b |\partial_x^\alpha u^b|^{p-2}|)\mathrm{d}x\mathrm{d}z,\\
\end{aligned}
\end{equation*}
which can be handled as for $J_1$. Now, it is clear that $J_2$ can be treated as for $J_1$. Thus we have
\begin{equation}\label{4.12}\tag{4.12}
\begin{aligned}
I_2\le & C\left( \sum_{\beta\le\alpha,\beta>0}\binom{\alpha}{\beta} (||1+z^{2k})^{\frac{1}{p}} u^b||^p_{|\beta|,p} + ||(1+z^{2k})^{\frac{1}{p}} u^b||^p_{m-|\beta|+1} \right)\\
& +C\|(1+z^{2k})^{\frac{1}{p}} \p^\alpha_x u^b\|^p_p.
\end{aligned}\end{equation}
Next, one can estimate $I_3$ as
\begin{equation*}
\begin{aligned}
&|I_3|=\left|-\iint_{\Omega\times \R_+}(1+z^{2k})z\left(\sum_{\beta\le\alpha}\binom{\alpha}{\beta}
\p_x^\beta f\p_x^{\alpha-\beta}\partial_zu^b\right)|\partial_x^\alpha u^b|^{p-2}\partial_x^\alpha
u^b\mathrm{d}x\mathrm{d}z\right|\\
&\le C\left|\iint_{\Omega\times \R_+}(1+z^{2k})z f\partial_z|\partial_x^\alpha u^c|^{p}\mathrm{d}x\mathrm{d}z\right|\\
&+\left|-\iint_{\Omega\times \R_+}(1+z^{2k})z\left(\sum_{0<\beta\le\alpha}\binom{\alpha}{\beta}
\p_x^\beta f\partial_z\p_x^{\alpha-\beta}u^b\right)|\partial_x^\alpha u^b|^{p-2}\partial_x^\alpha
u^b\mathrm{d}x\mathrm{d}z\right|\\
&:=I_{31}+I_{32}.
\end{aligned}
\end{equation*}
By the regularity of $f$ and integrating by parts for $z$, one gets
\begin{equation*}\begin{aligned}
&|I_{31}|\leq C\left|\iint_{\Omega\times \R_+}z(1+z^{2k})\partial_z|\partial_x^\alpha u^b|^{p}\mathrm{d}x\mathrm{d}z \right|\\
&\leq C\iint_{\Omega\times \R_+}(1+(2k+1)z^{2k})|\partial_x^\alpha
u^b|^{p}\mathrm{d}x\mathrm{d}z\\
&\leq C\|u^b\|^p_{k,m,0,p}.
\end{aligned}\end{equation*}

Due to the regularity of $\partial^\beta_x f$, it follows from integration by parts in $z$ and Young's inequality that
\begin{equation*}\begin{aligned}
I_{32}=& | \sum_{0<\beta\le\alpha}\binom{\alpha}{\beta} \iint_{\Omega\times \R_+} [(1+(2k+1)z^{2k})\p^\beta_x f \p_x^{\alpha-\beta}u^b|\partial_x^\alpha u^b|^{p-2}\partial_x^\alpha u^b\\
&+z(1+z^{2k}) \p^\beta_x f \p^{\alpha-\beta}_x u^b \p_z (|\p^\alpha_x u^b|^{p-2} \cdot \p^\alpha_x u^b)]\mathrm{d}x\mathrm{d}z|\\
\le & C\sum_{0<\beta\le\alpha}\binom{\alpha}{\beta} (\iint_{\Omega\times \R_+}(1+(2k+1)z^{2k}) |\partial_x^{\alpha-\beta}
u^b| |\partial_x^\alpha u^b|^{p-1} \mathrm{d}x\mathrm{d}z\\
& +\iint_{\Omega\times\R_+}(z+z^{2k+1}) |\p^{\alpha-\beta}_x u^b| |\p^\alpha_x u^b|^{p-2} |\p^\alpha_x \p_z u^b| \mathrm{d}x\mathrm{d}z)\\
\le & C \sum_{0<\beta\le\alpha}\binom{\alpha}{\beta} (\|u^b\|^p_{k,|\alpha-\beta|,0,p}+\|u^b\|^p_{k+{\frac{p}{2}},m-|\beta|,0,p})+C\|u^b\|^p_{k,m,0,p}\\
&+
\e\iint_{\Omega\times
\R_+}(1+z^{2k})|\partial_x^\alpha \partial_z u^b|^2 |\partial_x^\alpha
u^b|^{p-2} \mathrm{d}x\mathrm{d}z.
\end{aligned}\end{equation*}
Thus, collecting all the estimates above and using (4.8), one has by choosing $\e=\frac{1}{3}$ to get
\begin{equation}\label{4.13}\tag{4.13}
\begin{aligned}
&\frac{1}{p}\frac{d}{dt}\|u^b\|^p_{k,m,0,p}\\
\le & C \sum_{0<\beta\le\alpha}\binom{\alpha}{\beta} (\|u^b\|^p_{k,m-|\beta|,0,p} + \|u^b\|^p_{k+{\frac{p}{2}},m-|\beta|,0,p})+ C \|u^b\|^p_{k,m,0,p}.
\end{aligned}
\end{equation}
which yields (4.2) for $s=m$ by the induction assumptions and Gronwall's Lemma. This completes the proof of Proposition 4.3.
\end{proof}

\begin{Remark}
In this proof, it is required that $u_0\in W^{s+3,p}(\o)$. However, for $p=2$, one can prove that if  $u_0\in H^{m+1}(\Omega)\cap L^2_\sigma(\o)$ then $u^b\in L^\infty(0,T; H^{k,m,0})$.
In fact, for $m\le 2$, one can refer to the proof in \cite{IS}. However, if $m>2$, the proof can be done by modifying the $H_x^2$-estimates in \cite{IS} or by modifying the following analysis in Lemmas 4.5 and 4.6.
\end{Remark}
It seems difficult to get $L^p$-estimates of derivatives of the boundary layer profile in $(t,z)-$variable, but we can obtain the following estimates which improve the regularity with respect to $(t,z)$-variable in \cite{IS}.
\begin{Lemma}
If $u_0\in H^{m+1}(\o)\cap L^2_\sigma(\o),~\, m\ge 2$, with $u_0\cdot n|_{\p\o}=0,$ then $u^b\in L^{\infty}(0,T;H^{k,m,1}(\o\times\R_+))$ and $\p_t u^b\in L^2(0,T;H^{k,m,0}(\Omega\times\R_+)).$
\end{Lemma}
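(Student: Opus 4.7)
The plan is to derive both bounds simultaneously from a weighted energy estimate with test function $(1+z^{2k})\p_t\p_x^\alpha u^b$. Specifically, I would apply $\p_x^\alpha$ for $|\alpha|\le m$ to (3.6), take the $L^2$-inner product with $(1+z^{2k})\p_t\p_x^\alpha u^b$ on $\o\times\R_+$, and integrate by parts in $z$ in the diffusion term. This test function is chosen so that the time-derivative term yields $\iint(1+z^{2k})|\p_t\p_x^\alpha u^b|^2\dif x\dif z$, which after time integration produces the claimed $L^2(0,T;H^{k,m,0})$ bound on $\p_t u^b$, while the $-\p_z^2\p_x^\alpha u^b$ term, after integration by parts in $z$, generates $\tfrac12\tfrac{\dif}{\dif t}\iint(1+z^{2k})|\p_z\p_x^\alpha u^b|^2\dif x\dif z$, delivering the $L^\infty(0,T;H^{k,m,1})$ bound on $u^b$ after integration in time.

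Next I would control the remaining terms. The nonlinear contribution $\p_x^\alpha[(u^0\cdot\nabla u^b+u^b\cdot\nabla u^0)\times\vec n]$, expanded by Leibniz, is bounded in $L^2_{x,z}$ using the smoothness of $u^0$ (Theorem 2.7) and the $L^\infty(0,T;H^{k,m,0})$-bound on $u^b$ available from the $p=2$ version of Proposition 4.3 indicated in Remark 4.4 (which holds under the hypothesis $u_0\in H^{m+1}$); pairing with $\p_t\p_x^\alpha u^b$ and applying Young's inequality absorbs a small fraction of $\|\p_t\p_x^\alpha u^b\|_{L^2}^2$ into the left-hand side. The degenerate drift $\frac{u^0\cdot\vec n}{\varphi}z\p_z u^b$ is handled by Lemma 4.2 together with the elementary bound $z^{2j}\le 1+z^{2k}$ for $j\le k$. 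The weight residue $\iint 2kz^{2k-1}\p_z\p_x^\alpha u^b\cdot\p_t\p_x^\alpha u^b\dif x\dif z$ arising from integration by parts is controlled analogously using $2kz^{2k-1}\le C(1+z^{2k})$ together with Young's inequality.

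The main obstacle is the boundary contribution at $z=0$ produced by the same integration by parts, namely $\int_\o\p_z\p_x^\alpha u^b|_{z=0}\cdot\p_t\p_x^\alpha u^b|_{z=0}\dif x$. The boundary condition (3.8) gives $\p_z\p_x^\alpha u^b|_{z=0}=\p_x^\alpha g$ up to lower-order commutator terms involving derivatives of $\vec n$, where $g=-\C u^0\times\vec n$ is smooth; however, $\p_t\p_x^\alpha u^b|_{z=0}$ is not directly controlled. I would resolve this by integrating in time and using
\begin{equation*}
\int_0^t\!\!\int_\o\p_x^\alpha g\cdot\p_s\p_x^\alpha u^b|_{z=0}\dif x\dif s=\Bigl[\int_\o\p_x^\alpha g\cdot\p_x^\alpha u^b|_{z=0}\dif x\Bigr]_0^t-\int_0^t\!\!\int_\o\p_s\p_x^\alpha g\cdot\p_x^\alpha u^b|_{z=0}\dif x\dif s.
\end{equation*}
The boundary contribution at $s=0$ vanishes since $u^b(0)\equiv 0$, while the remaining pieces are estimated via the anisotropic trace inequality $\|\p_x^\alpha u^b|_{z=0}\|_{L^2_x}^2\le C\|\p_x^\alpha u^b\|_{L^2_{x,z}}\|\p_z\p_x^\alpha u^b\|_{L^2_{x,z}}$, the first factor being controlled by the $p=2$ version of Proposition 4.3 and the second absorbable (after Young) into the coercive $\|\p_z\p_x^\alpha u^b(t)\|_{L^2}^2$ on the left. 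Collecting all estimates, choosing $\e$ small, summing over $|\alpha|\le m$, and invoking Gronwall's Lemma (Lemma 2.6) yields both claimed regularities.
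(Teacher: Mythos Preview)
Your overall strategy coincides with the paper's: apply $\p_x^\alpha$ to (3.6), test against $(1+z^{2k})\,\p_t\p_x^\alpha u^b$, integrate by parts in $z$ so that the diffusion term produces $\tfrac12\tfrac{\dif}{\dif t}\|\p_z\p_x^\alpha u^b\|^2$ while the time-derivative term yields $\|\p_t\p_x^\alpha u^b\|^2$, and handle the $z=0$ boundary contribution by moving $\p_t$ off $u^b|_{z=0}$ onto $\p_x^\alpha g$. The paper then rewrites $\p_x^\alpha u^b|_{z=0}=-\int_0^\infty \p_z\p_x^\alpha u^b\,\dif z$ instead of using a trace inequality, but your variant is equivalent.

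Where your sketch is too optimistic is in the treatment of the convection and drift terms. You assert that $\p_x^\alpha[(u^0\cdot\nabla u^b+u^b\cdot\nabla u^0)\times\vec n]$ is bounded in weighted $L^2_{x,z}$ using only the $L^\infty(0,T;H^{k,m,0})$ control on $u^b$ from Remark~4.4. After Leibniz, however, the $\beta=0$ contribution from $u^0\cdot\nabla_x u^b$ is $u^0\cdot\nabla_x\p_x^\alpha u^b$, which carries $m{+}1$ spatial derivatives of $u^b$ and is \emph{not} controlled by $\|u^b\|_{k,m,0}$. The paper does not try to bound this term in $L^2$ directly; it instead invokes the structural manipulations from the $I_2$-estimate of Proposition~4.3 (exploiting $\di u^0=0$, $u^0\cdot\vec n|_{\p\o}=0$, and $u^b\cdot\vec n\equiv 0$) and arrives at $K_3\le C(\|u^b\|^2_{k,m-1,1}+\|u^b\|^2_{k,m,1})+\e\|\p_t\p_x^\alpha u^b\|^2$, then closes by using that $u^b\in L^2(0,T;H^{k,m,1})$, i.e.\ the parabolic-smoothing half of the $p=2$ estimate, not merely $L^\infty(0,T;H^{k,m,0})$. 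The drift term has the same feature: after $\p_x^\alpha$ the top piece $fz\,\p_z\p_x^\alpha u^b$ in weighted $L^2$ requires $\|u^b\|_{k+1,m,1}$, which is only $L^2$ in time; the paper handles the various $|\beta|$-cases separately with interpolation, again feeding in the $L^2(0,T;H^{k',m,1})$ bound. Your appeal to ``Lemma~4.2 together with $z^{2j}\le 1+z^{2k}$'' does not supply this. To repair the argument you should explicitly use the $L^2(0,T;H^{k',m',1})$ regularity coming from the basic $p=2$ energy estimate, and treat the top-order convection piece with the cancellation structure of $I_2$ rather than a crude $L^2$ bound.
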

\begin{proof}
Apply $\p_x^\alpha$ with $|\alpha|=m$ to (\ref{3.6}) to get
\begin{equation}\label{4.14}\tag{4.14}
\p_t\p_x^\alpha u^b-\p_z^2\p^\alpha_x
u^b+\p_x^\alpha(fz\p_zu^b)+\p_x^\alpha[(u^b\cdot\nabla
u^0+u^0\cdot\nabla u^b)\times \vec{n}]=0.
\end{equation}

Multiplying (\ref{4.14}) by $(1+z^{2k})\p_t\p_x^\alpha u^b$ and integrating over $\o\times\R_+$, one has
\begin{equation*}
\begin{aligned}
&0=\iint_{\o\times\R_+}(1+z^{2k})|\p_t\p_x^\alpha u^b|^2\dif x\dif z-\iint_{\o\times\R_+}(1+z^{2k})\p_t\p_x^\alpha u^b\p_z^2\p^\alpha_x u^b\dif x\dif z\\
&+\iint_{\o\times\R_+}(1+z^{2k})\p_t\p_x^\alpha u^b\p_x^\alpha(fz\p_zu^b)\dif x\dif z \\
&+\iint_{\o\times\R_+} (1+z^{2k})\p_t\p_x^\alpha u^b\p_x^\alpha[(u^b\cdot\nabla u^0+u^0\cdot\nabla u^b)\times \vec{n}]\\
&=\iint_{\o\times\R_+}(1+z^{2k})|\p_t\p_x^\alpha u^b|^2\dif x\dif
z+K_1+K_2+K_3.
\end{aligned}
\end{equation*}
We estimate $K_1$ first. In fact, integration by parts with respect
to $z$ yields
\begin{equation*}
\begin{aligned}
K_1= &\frac{1}{2}\frac{\dif}{\dif t}\iint_{\o\times\R_+}(1+z^{2k})|\p_x^\alpha \p_z u^b|^2\dif x\dif z\\
&+\iint_{\o\times\R_+}2kz^{2k-1}\p_z\p_x^\alpha u^b\cdot\p_t\p_x^\alpha u^b \dif x\dif z-\int_{\o}\p_z\p_x^\alpha u^b (x,t,0)\p_t\p_x^\alpha u^b(x,t,0)\dif x\\
\le &\frac{1}{2}\frac{\dif}{\dif t}\iint_{\o\times\R_+}(1+z^{2k})|\p_x^\alpha \p_z u^b|^2\dif x\dif z+C\|\p_z\p_x^\alpha u^b\|^2_{k,0,0}+\e\|\p_t\p_x^\alpha u^b\|^2_{k,0,0}\\
&-\int_{\o}\p_x^\alpha g\cdot\p_t\p_x^\alpha u^b(x,t,0)\dif x.
\end{aligned}
\end{equation*}
The last term on the right can be estimated as follows:
 \begin{equation*}
 \begin{aligned}
&-\int_{\o}\p_x^\alpha g\cdot\p_t\p_x^\alpha u^b(x,t,0)\dif x=-\frac{\dif}{\dif t}\int_{\o}\p_x^\alpha g\p_x^\alpha u^b|_{z=0}\dif x+\int_{\o}\p_t\p_x^\alpha g\p_x^\alpha u^b|_{z=0}\dif x\\
 &=\frac{\dif}{\dif t}\iint_{\o\times\R_+}\p_x^\alpha g\p_z\p_x^\alpha u^b\dif x\dif z-\iint_{\o\times\R_+}\p_t\p_x^\alpha g\p_z\p_x^\alpha u^b\dif x\dif z\\
 &\le \frac{\dif}{\dif t}\iint_{\o\times\R_+}\p_x^\alpha g\p_z\p_x^\alpha u^b\dif x\dif z+\|\p_z\p_x^\alpha u^b\|_{k,0,0}\|\p_t\p_x^\alpha g(1+z^{2k})^{-\frac{1}{2}}\|_{L^2(\o\times\R_+)}.
 \end{aligned}
 \end{equation*}
Therefore,
\begin{equation}\label{4.15}\tag{4.15}
\begin{aligned}
K_1\le & \frac{\dif}{\dif t}\iint_{\o\times\R_+}\left(\frac{1}{2}(1+z^{2k})|\p_x^\alpha \p_z u^b|^2+\p_x^\alpha g\p_z\p_x^\alpha u^b\right)\dif x\dif z\\
&+C\|\p_z\p_x^\alpha u^b\|^2_{k,0,0}+\e\|\p_t\p_x^\alpha u^b\|^2_{k,0,0}+\|\p_t\p_x^\alpha g(1+z^{2k})^{-\frac{1}{2}}\|^2_{L^2(\o\times\R_+)}.
\end{aligned}
\end{equation}
Next, we estimate $K_2$. Note that
\begin{equation*}
\begin{aligned}
K_2= &\iint_{\o\times\R_+}(1+z^{2k})\p_t\p_x^\alpha u^b\p_x^\alpha(fz\p_zu^b)\dif x\dif z\\
= & \iint_{\o\times\R_+}(1+z^{2k})\p_t\p_x^\alpha u^b z\sum_{\beta\le\alpha}\binom{\alpha}{\beta}\p_x^\beta f \p_x^{\alpha-\beta}\p_z u^b\dif x\dif z.
\end{aligned}
\end{equation*}
The terms on the right hand side above can be handled as follows.

For the case $\beta=0$, one has
\begin{equation*}
\begin{aligned}
&\left| \iint_{\o\times\R_+}(1+z^{2k})\p_t\p_x^\alpha u^b z\sum_{\beta\le\alpha}f \p_x^{\alpha}\p_z u^b\dif x\dif z\right|\\
\le & \|f\|_{\infty}\|(1+z^{2k})^{\frac{1}{2}}\p_t\p_x^\alpha u^b
\|_{2}\|(1+z^{2(k+1)})^{\frac{1}{2}}\p_z\p_x^\alpha u^b \|_{2}
\end{aligned}
\end{equation*}

For $|\beta|=1$, one can get
\begin{equation*}
\begin{aligned}
&\left|\iint_{\o\times\R_+}(1+z^{2k})\p_t\p_x^\alpha u^b z\sum_{\beta\le\alpha}\p_x^\beta f \p_x^{\alpha-\beta}\p_z u^b\dif x\dif z\right|\\
\le & C\int_{\R_+}(1+z^{2k})z\|\p_t\p_x^\alpha u^b\|_{2} \|\p_x^\beta f \|_{6}\|\p_x^{\alpha-\beta}\p_z u^b\|_{3}\dif z\\
\le & C\sum_{\beta\leq\alpha\atop{|\beta|\geq p}}\|(1+z^{2k})^{\frac{1}{2}}\p_t\p_x^\alpha u^b \|_{2}\|f\|_{2,2}\|(1+z^{2(k+2)})^{\frac{1}{2}}\p_z\p_x^{\alpha-\beta} u^b \|^{\frac{1}{2}}_{2} \|(1+z^{2k})^{\frac{1}{2}}\p_z\p_x^{\alpha-\beta} u^b \|^{\frac{1}{2}}_{1,2}
\end{aligned}
\end{equation*}
here one has used the interpolation inequality $\|u\|_{3}\le \|u\|^\frac{1}{2}_{2}\|u\|^{\frac{1}{2}}_{1,2}$.

In the case that $|\beta|\ge 2$, one has
\begin{equation*}
\begin{aligned}
&\left| \iint_{\o\times\R_+}(1+z^{2k})\p_t\p_x^\alpha u^b z\sum_{\beta\le\alpha}\p_x^\beta f \p_x^{\alpha-\beta}\p_z u^b\dif x\dif z\right|\\
\le & C\int_{\R_+}(1+z^{2k})z\|\p_t\p_x^\alpha u^b\|_{2} \|\p_x^\beta f \|_{2}\|\p_x^{\alpha-\beta}\p_z u^b\|_{\infty}\dif z\\
\le & C\|(1+z^{2k})^{\frac{1}{2}}\p_t\p_x^\alpha u^b \|_{2}\|f\|_{\beta,2}\|(1+z^{2(k+2)})^{\frac{1}{2}}\p_z\p_x^{\alpha-\beta} u^b \|^{\frac{1}{2}}_{1,2} \|(1+z^{2k})^{\frac{1}{2}}\p_z\p_x^{\alpha-\beta} u^b \|^{\frac{1}{2}}_{2,2}
\end{aligned}
\end{equation*}
where one has used the interpolation inequality $\|u\|_{\infty}\le\|u\|^{\frac{1}{2}}_{1,2}\|u\|^\frac{1}{2}_{2,2}.$

Therefore,
\begin{equation}\label{4.16}\tag{4.16}
\begin{aligned}
K_2\le C(\|u^b\|^2_{k+1,m,1}+\|u^b\|^2_{k+2,m-1,1}+\|u^b\|^2_{k,m,1})+\e\|\p_t\p_x^\alpha u^b\|^2_{k,0,0}.
\end{aligned}
\end{equation}

Using the similar argument as above and the proof of $I_2$ in
Proposition 4.3, we can obtain that
\begin{equation}\label{4.17}\tag{4.17}
K_3\le C(\|u^b\|^2_{k,m-1,1}+\|u^b\|^2_{k,m,1})+\e\|\p_t\p_x^\alpha u^b\|^2_{k,0,0}.
\end{equation}

Due to Remark 4.4, it holds that
$\|u^b\|^2_{L^2(0,T;H^{k,m,1}(\o\times\R_+)}\le C.$ It follows from
the Gronwall's lemma, H\"{o}lder inequality and the choice of
$\e=\frac{1}{6}$ that
$$\|\p_t u^b\|_{L^2(0,T;H^{k,m,0}(\o\times\R_+))}\le C,~\,\| u^b\|_{L^\infty(0,T;H^{k,m,1}(\o\times\R_+))}\le C. $$
The proof is completed.
\end{proof}
In fact, the regularity of $\p_t u^b$ can be improved further, if the initial data is regular enough.
\begin{Lemma}
If $u_0\in H^{m+2}(\o)\cap L_\sigma^2(\o), m\ge 2$ with $u_0\cdot
\vec{n}|_{\p\o}=0,$ then it holds that
\begin{equation}\label{4.18}\tag{4.18}
\p_t u^b\in L^{\infty}(0,T; H^{k,m,0}(\o\times\R_+))\cap L^2(0,T;H^{k,m,1} (\o\times\R_+)).
\end{equation}
Consequently,
\begin{equation}\label{4.19}\tag{4.19}
u^b\in L^{\infty}(0,T; H^{k,m,2}(\o\times\R_+))\cap L^2(0,T;H^{k,m,3} (\o\times\R_+)).
\end{equation}
\end{Lemma}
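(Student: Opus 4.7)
The plan is to differentiate \eqref{3.6} in $t$ and run an energy estimate for $w:=\partial_t u^b$ that exactly parallels the one carried out for $u^b$ in Lemma 4.5. Differentiating \eqref{3.6} in $t$ yields
\begin{equation*}
\partial_t w - \partial_z^2 w + fz\partial_z w + (u^0\cdot\nabla w + w\cdot\nabla u^0)\times\vec{n} = -(\partial_t f)z\partial_z u^b - (\partial_t u^0\cdot\nabla u^b + u^b\cdot\nabla\partial_t u^0)\times\vec{n},
\end{equation*}
with boundary datum $\partial_z w|_{z=0}=\partial_t g$ (where $g=-\C u^0\times\vec{n}$) and initial value $w|_{t=0}=0$ read from \eqref{3.6} at $t=0$ using $u^b(0)\equiv 0$. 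The extra time derivative costs one extra derivative of $u^0$, which is exactly why one needs $u_0\in H^{m+2}$: Theorem 2.7 then furnishes $u^0\in L^\infty(0,T;H^{m+2})$ and $\partial_t u^0\in L^\infty(0,T;H^{m+1})$, and Lemma 4.2 (whose proof applies equally to $\partial_t u^0$ since $\partial_t u^0\cdot\vec{n}|_{\partial\o}=0$ follows from differentiating the impermeability condition in $t$) gives $\partial_t f\in L^\infty(0,T;H^m)$.

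Applying $\partial_x^\alpha$ with $|\alpha|\le m$ and testing against $(1+z^{2k})\partial_x^\alpha w$ produces, after integration by parts in $z$,
\begin{equation*}
\tfrac{1}{2}\tfrac{\dif}{\dif t}\|\partial_x^\alpha w\|_{H^{k,0,0}}^2 + \|\partial_z\partial_x^\alpha w\|_{H^{k,0,0}}^2 = (\text{boundary term}) + (\text{interior terms}).
\end{equation*}
The interior terms are controlled exactly as for $K_2,K_3$ in the proof of Lemma 4.5, via the Leibniz expansion, Sobolev embedding and interpolation; the extra sources involving $\partial_t u^0$ and $\partial_t f$ are bounded by $C\bigl(\|u^b\|_{H^{k+1,m,1}}^2+\|u^b\|_{H^{k,m,1}}^2\bigr)$, which is finite by Lemma 4.5. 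The boundary contribution $-\int_\o \partial_x^\alpha(\partial_t g)\cdot\partial_x^\alpha w(x,t,0)\dif x$ is handled by Young's inequality combined with the trace estimate $\|\partial_x^\alpha w(\cdot,t,0)\|_2^2\le C\|\partial_z\partial_x^\alpha w\|_{H^{k,0,0}}\|\partial_x^\alpha w\|_{H^{k,0,0}}$, which absorbs a fraction of the dissipation and leaves the remainder $C\int_0^T\|\partial_x^\alpha \partial_t g\|_2^2\dif t$, finite because $\partial_t^2 u^0\in L^\infty(0,T;H^m)$ (apply the Euler equation twice to trade two $t$-derivatives for two $x$-derivatives, using $u_0\in H^{m+2}$). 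Gronwall's lemma then delivers \eqref{4.18}.

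The bound \eqref{4.19} is purely algebraic. Rewriting \eqref{3.6} as
\begin{equation*}
\partial_z^2 u^b = \partial_t u^b + fz\partial_z u^b + (u^0\cdot\nabla u^b + u^b\cdot\nabla u^0)\times\vec{n},
\end{equation*}
the right-hand side lies in $L^\infty(0,T;H^{k,m,0})$ by \eqref{4.18}, Proposition 4.3, and Lemma 4.5 applied with weight index $k+1$ (to swallow the extra $z$ factor in $fz\partial_z u^b$); this yields $u^b\in L^\infty(0,T;H^{k,m,2})$. Differentiating this identity once more in $z$ and using the $L^2(0,T;H^{k,m,1})$ part of \eqref{4.18} together with the bound just obtained, upgraded to $H^{k+1,m,2}$, gives $u^b\in L^2(0,T;H^{k,m,3})$. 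The chief technical difficulty is the corner incompatibility at $\{t=0\}\cap\{z=0\}$: the interior relation $w(0)=0$ conflicts with the boundary trace $\partial_z u^b|_{z=0,\,t=0^+}=-\C u_0\times\vec{n}$, so $w$ is only weakly zero at $t=0$. This is reconciled, as is standard for boundary layer problems, by regularizing the initial-boundary data, running the a priori estimate on the smooth approximations, and passing to the limit in the $\nu$-uniform bounds.
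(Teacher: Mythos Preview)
Your proof follows essentially the same route as the paper's: differentiate \eqref{3.6} in time, test against $(1+z^{2k})\partial_t\partial_x^\alpha u^b$, control the boundary contribution through the Neumann datum $\partial_t g$, close with Gronwall, and then read off \eqref{4.19} algebraically from the equation. Two minor remarks: your invocation of $\partial_t^2 u^0\in L^\infty(0,T;H^m)$ is unnecessary, since bounding $\partial_x^\alpha\partial_t g$ for $|\alpha|\le m$ requires only $\partial_t u^0\in L^\infty(0,T;H^{m+1})$, which is precisely what $u_0\in H^{m+2}$ delivers via Theorem~2.7; and the paper handles the initial trace by arguing directly from the equation at $t=0$ that $\partial_t u^b\to 0$, whereas your regularization of the corner incompatibility is arguably the more careful justification.
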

\begin{proof}
It follows from (3.6) that
\begin{equation}\label{4.20}\tag{4.20}
\p_t(\p_t\p_x^\alpha u^b)-\p_z^2(\p_t\p_x^\alpha u^b)+\p_t\p_x^\alpha(fz\p_z u^b)+\p_t\p_x^\alpha[(u^0\cdot\nabla u^b+u^b\cdot\nabla u^0)\times \vec{n}]=0.
\end{equation}
Multiply (4.20) by $(1+z^{2k})\p_t\p_x^\alpha u^b$ and integrate over $\o\times\R_+$ to get
\begin{equation}\label{4.21}\tag{4.21}
\begin{aligned}
&\frac{1}{2} \frac{\dif}{\dif t}\iint_{\o\times\R_+}(1+z^{2k})|\p_t \p_x^\alpha u^b|^2\dif x\dif z-\iint_{\o\times\R_+}(1+z^{2k})\p_z^2(\p_t\p_x^\alpha u^b)\p_t\p_x^\alpha u^b\dif x\dif z\\
&+\iint_{\o\times\R_+}(1+z^{2k})\p_t\p_x^\alpha(fz\p_z u^b)\p_t\p_x^\alpha u^b\dif x\dif z\\
&+\iint_{\o\times\R_+}(1+z^{2k})\p_t\p_x^\alpha[(u^0\cdot\nabla u^b+u^b\cdot\nabla u^0)\times \vec{n}]\p_t\p_x^\alpha u^b\dif x\dif z=0.
\end{aligned}
\end{equation}
We treat the case $|\alpha|=0$ first. Note that
\begin{equation*}
\begin{aligned}
&-\iint_{\o\times\R_+}(1+z^{2k})\p^2_z (\p_t u^b) \p_t u^b\dif x\dif z = \iint_{\o\times\R_+}(1+z^{2k}) |\p_z \p_t u^b|^2 \dif x\dif z\\
& + 2k \iint_{\o\times\R_+} z^{2k-1} \p_z \p_t u^b \p_t u^b \dif x\dif z - \int_\Omega \p_t \p_z u^b \p_t u^b|_{z=0} \dif x\\
\geq &\iint_{\o\times\R_+}(1+z^{2k}) |\p_z \p_t u^b|^2 \dif x\dif z - C||(1+z^{2k})^{\frac{1}{2}} \p_z \p_t u^b||_2 ||(1+z^{2k})^{\frac{1}{2}} \p_t u^b||_2\\
& -\iint_{\o\times\R_+} |\p_t g \p_t \p_z u^b| \dif x\dif z\\
\geq & \iint_{\o\times\R_+} (1+z^{2k}) |\p_z \p_t u^b|^2 \dif x\dif z -C ||(1+z^{2k})^{\frac{1}{2}} \p_z\p_t u^b||_2 \cdot ||(1+z^{2k})^{\frac{1}{2}} \p_t u^b||_2\\
&-C||(1+z^{2k})^{\frac{1}{2}} \p_z\p_t u^b||_2 ||(1+z^{2k})^{-\frac{1}{2}} \p_t g||_2,
\end{aligned}
\end{equation*}
and
\begin{equation*}
\begin{aligned}
&\left|\iint_{\o\times\R_+}(1+z^{2k})\p_t(fz\p_z u^b)\p_t u^b\dif x\dif z\right|\\
\le & \left|\iint_{\o\times\R_+}(1+z^{2k})\p_t fz\p_z u^b\p_t u^b\dif x\dif z\right|\\
&+ \left|\iint_{\o\times\R_+}(1+z^{2k})fz\p_z \p_t u^b\p_t u^b\dif x\dif z\right|\\
\le & C\|\p_t f\|_{L^\infty}\|(1+z^{2k+2})^{\frac{1}{2}} \p_t u^b\|_{2}\|(1+z^{2k})^{\frac{1}{2}} \p_z u^b\|_{2}\\
&+C\| f\|_{L^\infty}\|(1+z^{2k+2})^{\frac{1}{2}} \p_tu^b\|_{2}\|\p_t  \p_z u^b(1+z^{2k})^\frac{1}{2}\|_{2}\\
\le & C\|\p_tu^b\|^2_{k+1,0,0}+\|u^b\|^2_{k,0,1}+\e\|\p_t  \p_z u^b\|^2_{k,0,0}.
\end{aligned}
\end{equation*}
Similarly, one can get
 \begin{equation*}
\begin{aligned}
&\left|\iint_{\o\times\R_+}(1+z^{2k})\p_t[(u^0\cdot\nabla u^b+u^b\cdot\nabla u^0)\times \vec{n}]\p_t u^b\dif x\dif z\right|\\
=& \left| \iint_{\o\times\R_+}(1+z^{2k})\left[(\p_tu^0\cdot\nabla u^b+u^0\cdot\nabla\p_t u^b+\p_t u^b\cdot\nabla u^0+u^b\nabla\p_t u^0)\times \vec{n}\right]\p_tu^b\dif x\dif z\right|\\
\le & C(\|\p_tu^c\|^2_{k,2,0}+\| u^c\|^2_{k,1,0}).
\end{aligned}
\end{equation*}
It follows from these, the Gronwall's Lemma and Lemma 4.5 that
\begin{equation}\label{4.22}\tag{4.22}
\begin{aligned}
\sup_{0\le t\le T}\|\p_t u^b(t)\|^2_{k,0,0}+\int_0^T\iint_{\o\times\R_+}(1+z^{2k})|\p_t\p_z u^b|^2\dif x\dif z\dif t\le C+\lim_{t\to 0}\|\p_t u^b(t)\|^2_{k,0,0}.
\end{aligned}
\end{equation}
By induction, as in argument for Proposition 4.3, we can obtain
\begin{equation}\label{4.23}\tag{4.23}
\begin{aligned}
\sup_{0\le t\le T}\|\p_t u^b(t)\|^2_{k,m,0}+\|\p_t u^b|^2_{L^2(0,T;H^{k,m,1}(\o\times\R_+))}\le C+\lim_{t\to 0}\|\p_t u^b(t)\|^2_{k,m,0}.
\end{aligned}
\end{equation}
Since $u^b(x,0,z)=0$, thus $\p_x^\alpha\p^j_z u^b(x,0,z)=0,$  for all $\alpha,j$. Therefore, taking limit in both sides of (4.22)-(4.23) as $t\to 0$,  we can get $\p_t\p_x^\alpha u^b(x,t,z)\to 0$ in a.e. $\o\times\R_+$ as $t\to 0.$  Therefore, the last terms on the right side of (4.22) and (4.23) vanish. This proves (4.18).

To prove (4.19), one uses (3.16) again, (4.18) and Lemma 4.5 to get
\begin{equation}\label{4.24}\tag{4.24}
u^b\in L^{\infty}(0,T; H^{k,m,2}(\o\times\R_+)).
\end{equation}
Similarly, differentiating (3.6) in $z$, we can then use (4.18),
(4.24) and Lemma 4.5 to obtain the second part of (4.19). Thus Lemma
4.6 is proved.
\end{proof}
\section{$L^p-$uniform bound of the remainder $R^\nu$ for $3<p\le6$}
In this section, we give the $L^p$-estimates of the remainder $R^\nu$ in (3.1). Note that the
remainder $R^\nu$ satisfies the following equation ([12]):
\begin{equation}\label{5.1}
\begin{aligned}
&\partial_tR^\nu-\nu \triangle R^\nu+u^\nu\cdot\nabla
R^\nu+R^\nu\cdot\nabla u^0+\sqrt{\nu}R^\nu\cdot \vec{n}\partial_z
v+R^\nu\cdot \vec{n}\partial_zu^b+\sqrt{\nu}R^\nu\cdot
\nabla_xu^b\\
=&-\partial_t v+\triangle
u^0+\sqrt{\nu}\triangle_xu^b+2\vec{n}\cdot\nabla_x\partial_z
u^b+\nu\triangle_x[v(x,\frac{\varphi(x)}{\sqrt{\nu}})]-u^\nu\cdot\nabla_x
v-v\cdot\nabla u^0\\
&-\frac{1}{\sqrt{\nu}}u^0\cdot \vec{n}\partial_zv
-\sqrt{\nu}v\cdot \vec{n}\partial_zv-v\cdot
\vec{n}\partial_zu^b-u^b\cdot\nabla_xu^b+\triangle\varphi\cdot\partial_zu^b-\sqrt{\nu}v\cdot\nabla_x
u^b\\
&+\nabla_xq+\nabla_x\kappa:=R.H.S. ~~~~~~~~~~~~~~~~~~~~~\mbox{in}~~\Omega,&\\
&\mathrm{div}R^\nu=-
 \mathrm{div}_x v(t,x,\frac{\varphi(x)}{\sqrt{\nu}}) ~~~~~~~~~~~~~~~~~~\mbox{in}~~\Omega,&
\end{aligned}
\end{equation}
with the boundary conditions:
\begin{alignat}{12}
& R^\nu\cdot \vec{n}(x)+v(t,x,0)\cdot \vec{n}(x)=0,~~~~~\mbox{for}~~x\in\partial\Omega,\label{5.2}\\
&\C\,\rr\times\vec{n}+\frac{1}{\sqrt{\nu}}\C_x u^b
(t,x,0)\times\vec{n}+\C_x v(t,x,0)\times \vec{n}=0
~\mbox{for}~~x\in\partial\Omega.\label{5.3}
\end{alignat}
The initial data for $R^\nu$ is
\begin{equation}\label{5.4}~~~~~~~~~~~~~ R^\nu(0,x)=0,
~~~~~~\mbox{for}~~x\in\Omega~~~~~~~~~.
\end{equation}
Set $$b(t,x)=\frac{1}{\sqrt{\nu}}u^b(t,x,0)+ v(t,x,0)
.$$

In the sequel, we need the following anisotropic Sobolev embedding result whose proof is given in [12].
\begin{Lemma}
Let $U(x,z)$ be a sufficiently regular function defined on $\Omega\times\R_+$. Assume that either $ 2\le p<\infty$, $m\ge\frac{3}{2}-\frac{3}{p}$ or $p=\infty$, $m>\frac{3}{2}$. Then
\begin{equation}\label{5.5}
\|U(x,\frac{\varphi(x)}{\sqrt{\nu}})\|_p\le C\|U\|_{1,m,1}.
\end{equation}
\end{Lemma}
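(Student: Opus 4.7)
The plan is to exploit the product structure $\o\times\mathbb{R}_+$ by combining a one-dimensional Sobolev embedding in the fast variable $z$ with a three-dimensional Sobolev embedding in the slow variable $x$. The key observation is that the composite $x\mapsto U(x,\varphi(x)/\sqrt{\nu})$ can be pointwise dominated in $x$ by a function of $U$ that does not involve $\nu$ at all, after which only standard embeddings are required.

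First I would establish the pointwise bound
$$|U(x,\varphi(x)/\sqrt{\nu})|^2 \le \sup_{z\ge 0}|U(x,z)|^2 \le C\,G(x)^2, \qquad G(x)^2:=\int_0^\infty\bigl(|U(x,z)|^2+|\partial_z U(x,z)|^2\bigr)\dif z,$$
which follows from the one-dimensional Sobolev embedding $H^1(\mathbb{R}_+)\hookrightarrow L^\infty(\mathbb{R}_+)$ applied to $z\mapsto U(x,z)$ at each fixed $x$. This removes $\nu$ from the composition entirely and reduces the lemma to estimating $\|G\|_{L^p(\o)}$ by the $H^{1,m,1}$-norm of $U$.

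For the range $2\le p<\infty$, I would apply Minkowski's integral inequality with exponent $p/2\ge 1$ in order to interchange the $L^{p/2}_x$-norm with the $z$-integral of the nonnegative integrand $|U|^2+|\partial_z U|^2$:
$$\|G\|_{L^p(\o)}^2 = \|G^2\|_{L^{p/2}(\o)} \le \int_0^\infty\bigl(\|U(\cdot,z)\|_{L^p(\o)}^2+\|\partial_z U(\cdot,z)\|_{L^p(\o)}^2\bigr)\dif z.$$
At each fixed $z$ the three-dimensional Sobolev embedding $H^m(\o)\hookrightarrow L^p(\o)$ is available under the sharp scaling condition $m\ge\tfrac{3}{2}-\tfrac{3}{p}$, so the integrand is majorized by $\sum_{|\alpha|\le m}(\|\partial_x^\alpha U(\cdot,z)\|_{L^2}^2+\|\partial_x^\alpha\partial_z U(\cdot,z)\|_{L^2}^2)$. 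Integrating in $z$ then yields $\|G\|_{L^p(\o)}^2\le C\|U\|_{0,m,1}^2\le C\|U\|_{1,m,1}^2$, since the polynomial weight $(1+z^2)$ is bounded below by $1$.

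The case $p=\infty$ is handled by the same scheme, with the $L^{p/2}_x$-norm replaced by a supremum in $x$ and Minkowski replaced by the elementary $\sup_x\int f\dif z\le\int\sup_x f\dif z$ for nonnegative $f$, followed by $H^m(\o)\hookrightarrow L^\infty(\o)$ for $m>\tfrac{3}{2}$, matching the stated threshold. I do not anticipate any real obstacle here; the only delicate points are verifying that Minkowski's inequality is indeed legitimate (which forces $p\ge 2$, hence the hypothesis) and that the Sobolev thresholds in $x$ are precisely the sharp ones assumed on $m$, both of which are immediate once the reduction to $G(x)$ has been made.
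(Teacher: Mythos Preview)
Your argument is correct. The paper itself does not prove this lemma but simply cites the Iftimie--Sueur paper [12]; your combination of the one-dimensional embedding $H^1(\mathbb{R}_+)\hookrightarrow L^\infty(\mathbb{R}_+)$ in $z$, Minkowski's inequality to swap the $L^{p/2}_x$-norm and the $z$-integral, and the sharp three-dimensional embedding $H^m(\Omega)\hookrightarrow L^p(\Omega)$ in $x$ is exactly the standard route and is presumably what [12] does as well.
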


We now begin to derive the $L^p$-estimate of $R^\nu$. To this end ,
we need the following Weyl decomposition of the space $L^p(\o)$:
\begin{Lemma}
Let $p\geq 2$. Set $G_p(\o)=\{u\in L^p(\o):u=\nabla q, q\in W^{1,p}(\o)\}$ and $J_p(\o)=\{u\in L^p(\o):\di u=0$ in $\o$, $u\cdot \vec{n}=0$ on $\p\o\}$. Then
$$L^p(\o)=G_p(\o)\bigoplus J_p(\o)$$
and the projections of an arbitrary vector field $u(x)$ to the above subspaces are defined respectively by the formulas
\begin{equation}\label{5.6}
\begin{aligned}
\P_G u&=-\nabla\int_\o\nabla_y N(x,y)\cdot u(y) dy\\
\P_J u&= u+\nabla\int_\o\nabla_y N(x,y)\cdot u(y) dy.
\end{aligned}
\end{equation}
with the following estimates:
\begin{equation*}
\begin{aligned}
\|\P_G u\|_{l,p}+\|\P_J u\|_{l,p}\le C\|u\|_{l,p}
\end{aligned}
\end{equation*}
where $l<r$ if $\p\o\in C^{r+1}$ and $u\in W^{l,p}(\o)$.
\end{Lemma}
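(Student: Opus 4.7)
The plan is to prove Lemma 5.2 in three stages: uniqueness of the decomposition, explicit construction of the projections via the Neumann Green's function, and the $W^{l,p}$ estimates.

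\emph{Uniqueness.} I would first verify $G_p(\o) \cap J_p(\o) = \{0\}$. If $u = \nabla q$ lies in $J_p(\o)$, then $q \in W^{1,p}(\o)$ is a distributional solution of $\Delta q = \di u = 0$ in $\o$ with vanishing Neumann data $\p_{\vec{n}} q = u \cdot \vec{n} = 0$ on $\p\o$. By classical elliptic regularity and the connectedness of $\o$, $q$ must be constant, so $u = 0$. This shows that the sum $G_p \oplus J_p$ is direct.

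\emph{Construction.} For existence, I would use the Neumann Green's function $N(x,y)$ of $-\Delta$ on $\o$, normalized so that $-\Delta_y N(x,y) = \delta(x-y) - |\o|^{-1}$ with $\p_{\vec{n}_y} N(x,y) = 0$ on $\p\o$ and $\int_\o N(x,y)\dif y = 0$. For $u \in L^p(\o)$, define $q(x) = \int_\o \nabla_y N(x,y) \cdot u(y)\dif y$ and set $\P_G u = -\nabla q$, $\P_J u = u + \nabla q$. By integration by parts (justified first for smooth $u$ and then extended by density), $q$ is a weak solution of the Neumann problem $\Delta q = \di u$ in $\o$ with $\p_{\vec{n}} q = u \cdot \vec{n}$ on $\p\o$. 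It follows that $\di(\P_J u) = 0$ in $\o$ and $(\P_J u) \cdot \vec{n} = 0$ on $\p\o$, hence $\P_J u \in J_p(\o)$, while $\P_G u = -\nabla q \in G_p(\o)$ by construction. Combined with uniqueness, this gives the direct sum decomposition $L^p(\o) = G_p(\o) \oplus J_p(\o)$.

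\emph{Estimates.} The kernel $\nabla_x \nabla_y N(x,y)$ has a Calder\'on--Zygmund-type singularity of order $|x-y|^{-3}$ in $\R^3$, so classical singular integral theory yields $\|\nabla q\|_p \le C\|u\|_p$ for $1 < p < \infty$, which gives the $L^p$-case $l = 0$. The higher-order bounds $\|\P_G u\|_{l,p} + \|\P_J u\|_{l,p} \le C\|u\|_{l,p}$ follow from the Agmon--Douglis--Nirenberg elliptic regularity theory for the Neumann problem: if $\p\o \in C^{r+1}$ and $u \in W^{l,p}(\o)$ with $l < r$, then $\di u \in W^{l-1,p}(\o)$ and the trace $u \cdot \vec{n} \in W^{l-1/p,p}(\p\o)$ have enough regularity for the boundary to absorb, yielding $q \in W^{l+1,p}(\o)$ with the corresponding bound. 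The main obstacle will be this sharp $W^{l,p}$ estimate up to the boundary, which requires both the $C^{r+1}$ smoothness of $\p\o$ and Calder\'on--Zygmund-type estimates near $\p\o$; however, these are standard tools in the Helmholtz--Weyl decomposition literature and can be invoked directly without further work.
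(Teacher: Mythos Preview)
Your proposal is a correct and standard outline of the Helmholtz--Weyl decomposition in $L^p$: the uniqueness step via the Neumann problem for $q$, the construction via the Neumann Green's function (which matches the explicit formulas in the statement), and the $W^{l,p}$ bounds via Calder\'on--Zygmund theory and Agmon--Douglis--Nirenberg regularity are all sound.

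However, the paper does not actually prove this lemma at all --- its ``proof'' consists entirely of a citation to Solonnikov~\cite{SO1}. So there is no meaningful comparison to make with the paper's own argument; you have supplied a genuine proof sketch where the paper merely points to the literature. If anything, your outline is precisely the kind of argument one finds in the cited reference, so in that sense you are reproducing what the paper defers to rather than diverging from it.
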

\begin{proof}
See\cite{SO1}
\end{proof}
For simplicity, we set $\mathbb{P}=\mathbb{P}_J$ and decompose $R^\nu=\mathbb{P}R^\nu+(I-\mathbb{P})R^\nu$. It can be shown easily that $(I-\mathbb{P})R^\nu$ is bounded in $W^{1,p}(\Omega)$
independent of $\nu$.

\begin{Lemma} $(I-\mathbb{P})R^\nu$ is uniformly
bounded in $L^\infty(0,T;W^{1,p}(\Omega))$ for $3<p\le 6$, that is,
\begin{equation*}
\|(I-\mathbb{P})R^\nu\|_{1,p}\le C\|u^b\|_{1,3,0}.
\end{equation*}
\end{Lemma}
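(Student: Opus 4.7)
The plan is to exploit the fact that $(I-\mathbb{P})R^\nu$ lies in $G_p(\o)$ and is therefore a pure gradient, so it is completely determined by $\di R^\nu$ and $R^\nu\cdot\vec n$, both of which are given explicitly by the equation (5.1) and the boundary condition (5.2). First I would invoke Lemma 5.2 to write $(I-\mathbb{P})R^\nu=\nabla\phi$, where $\phi$ is the (unique up to a constant) solution of the Neumann problem
\[
\Delta\phi=-\di_x v\!\left(t,x,\tfrac{\varphi(x)}{\sqrt{\nu}}\right)\quad\text{in }\o,\qquad \p_n\phi=-v(t,x,0)\cdot\vec n\quad\text{on }\p\o,
\]
the compatibility condition being automatic from the divergence theorem. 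Standard $W^{2,p}$ Neumann elliptic regularity on the smooth bounded domain $\o$ then yields
\[
\|(I-\mathbb{P})R^\nu\|_{1,p}\le \|\phi\|_{2,p}\le C\Bigl(\|\di_x v(\cdot,\varphi/\sqrt\nu)\|_p+\|v(\cdot,0)\cdot\vec n\|_{1-1/p,p,\p\o}\Bigr),
\]
which reduces the lemma to two estimates involving only the auxiliary profile $v$.

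The next step is to trade $v$ for $u^b$ via the structure of the ansatz. By the divergence identity (3.4), $v$ is slaved to $u^b$ through $\p_z(v\cdot\vec n)=-\di_x u^b$, and combined with the natural decay $v\cdot\vec n\to 0$ as $z\to\infty$ this yields the representation $(v\cdot\vec n)(t,x,z)=\int_z^\infty\di_x u^b(t,x,s)\,\dif s$ together with analogous expressions for its $x$-derivatives. Applying the Cauchy--Schwarz inequality with the weight $(1+s^2)^{1/2}$ to absorb the slow $z$-decay gives, for every multi-index $|\alpha|\le 2$,
\[
|\p_x^\alpha(v\cdot\vec n)(t,x,z)|^2\le \Bigl(\int_0^\infty\tfrac{\dif s}{1+s^2}\Bigr)\int_0^\infty(1+s^2)\,|\p_x^\alpha\di_x u^b(t,x,s)|^2\,\dif s,
\]
and integrating in $x$ produces $\|v(\cdot,0)\cdot\vec n\|_{H^2(\o)}\le C\,\|u^b\|_{1,3,0}$ (the three tangential derivatives on $u^b$ being accounted for as one from $\di_x u^b$ and two from the $H^2$ norm).

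For the boundary trace, the three-dimensional Sobolev embedding $H^2(\o)\hookrightarrow W^{1,p}(\o)$, valid precisely for $3<p\le 6$, followed by the trace inclusion $W^{1,p}(\o)\hookrightarrow W^{1-1/p,p}(\p\o)$, then gives $\|v(\cdot,0)\cdot\vec n\|_{1-1/p,p,\p\o}\le C\|u^b\|_{1,3,0}$. For the interior divergence term I would apply an $L^p$ composition estimate of the Lemma 4.1 type (absorbing the small factor $\nu^{1/(2p)}\le 1$), and then combine the same integral representation with a Sobolev embedding of $H^{k,m,0}$ into $W^{0,m,0,p}$ on cross-sections to produce $\|\di_x v(\cdot,\varphi/\sqrt\nu)\|_p\le C\|u^b\|_{1,3,0}$. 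Summing the two contributions in the elliptic estimate closes the proof.

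The main obstacle I anticipate is the mismatch of norms: the statement forces $L^p$-based estimates on the left while the admissible control on $u^b$ is only in the $L^2$-based anisotropic space $H^{1,3,0}$. The ceiling $p\le 6$ is therefore structural, coming from the failure of $H^2\hookrightarrow W^{1,p}$ in three dimensions beyond this exponent. A secondary subtlety is the careful bookkeeping of the $(1+z^2)^{1/2}$ weight in Cauchy--Schwarz, which is exactly what makes the unweighted half-line integrals $\int_z^\infty\di_x u^b\,\dif s$ controllable by the weighted norm $\|u^b\|_{1,3,0}$.
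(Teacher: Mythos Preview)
Your proposal is correct and is precisely the argument the paper has in mind: the paper's own proof is the single sentence ``This conclusion follows easily by using the standard $L^p$ estimates for elliptic equations, or refer to \cite{IS},'' and you have supplied the details behind that sentence. The reduction of $(I-\mathbb{P})R^\nu=\nabla\phi$ to the Neumann problem with data $\di R^\nu=-\di_x v$ and $\p_n\phi=-v(t,x,0)\cdot\vec n$, the $W^{2,p}$ elliptic estimate, the integral representation $v\cdot\vec n=\int_z^\infty\di_x u^b\,\dif s$ coming from (3.4), and the passage $H^{1,3,0}\to H^2_x\hookrightarrow W^{1,p}_x$ for $p\le 6$ are exactly the ingredients; your identification of $p\le 6$ as the Sobolev threshold $H^2(\mathbb{R}^3)\hookrightarrow W^{1,6}$ is the right structural explanation for the hypothesis.
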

\begin{proof}
This conclusion follows easily by using the standard $L^p$ estimates for elliptic equations, or refer to \cite{IS}.
\end{proof}

The main part of this section is to bound $\|\mathbb{P}R^\nu\|_p$ independent of $\nu$. Indeed, we have
\begin{Lemma}
Let $3<p\leq 6$. There exists a positive constant $\nu_0\in(0,1)$ such that for all $\nu\in(0,\nu_0]$, it holds that
\begin{equation}\tag{5.8}
\sup_{0\leq t\leq T} \|\mathbb{P} R^\nu\|^p_p + C_0\nu\int^T_0 \|\nabla |\mathbb{P} R^\nu|^{\frac{p}{2}} \|^2_2 dt \leq C
\end{equation}
with positive constants $C_0$ and $C$ independent of $\nu$.
\end{Lemma}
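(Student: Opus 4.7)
The strategy is a standard $L^p$ energy estimate applied to the remainder equation (5.1). I would first apply the Helmholtz projection $\mathbb{P}$ to (5.1) to annihilate the pressure gradient $\nabla_x q + \nabla_x \kappa$, then test the resulting equation against $|\mathbb{P}R^\nu|^{p-2}\mathbb{P}R^\nu$ and integrate over $\Omega$. The time derivative yields $\frac{1}{p}\frac{d}{dt}\|\mathbb{P}R^\nu\|_p^p$. For the viscous term, I would decompose $\mathbb{P}\Delta R^\nu = \Delta \mathbb{P}R^\nu + [\mathbb{P},\Delta]R^\nu$; using $-\Delta = \C\,\C - \nabla\di$ together with integration by parts one obtains the desired dissipative bound
\[
C_0\nu\int_\Omega |\nabla \mathbb{P}R^\nu|^2|\mathbb{P}R^\nu|^{p-2}\,\dif x \ge C_0'\nu\bigl\|\nabla|\mathbb{P}R^\nu|^{p/2}\bigr\|_2^2,
\]
up to boundary integrals controlled by (5.2)--(5.3), while the commutator involves only the gradient part $(I-\mathbb{P})R^\nu$, whose $W^{1,p}$ norm is bounded uniformly in $\nu$ by Lemma 5.2.

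\textbf{Forcing terms.} The convective term $u^\nu\cdot \nabla R^\nu$, after splitting $R^\nu = \mathbb{P}R^\nu + (I-\mathbb{P})R^\nu$, contributes zero on the solenoidal piece by $u^\nu\cdot \vec n|_{\p\o}=0$ and $\di u^\nu=0$, and on the gradient piece is controlled by the uniform bounds $\|u^\nu\|_{1,\infty}\le C$ from Theorem 2.8 together with Lemma 5.2. The bulk forcings of the form $R^\nu\cdot\nabla u^0$, $\sqrt{\nu}R^\nu\cdot\nabla_x u^b$, $R^\nu\cdot \vec n\,\p_z u^b$, etc., are estimated by H\"older's inequality combined with Lemmas 4.3, 4.5 and 4.6 to control the $W^{k,m,l,p}$-norms of $u^b$, and Lemma 5.1 to convert the anisotropic norms into $L^p(\Omega)$ norms after the substitution $z=\varphi(x)/\sqrt\nu$, which produces a harmless factor $\nu^{1/(2p)}$. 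The remaining explicit right-hand side terms of (5.1) (excluding the two singular ones discussed below) are either $O(\sqrt\nu)$ or $O(1)$ in $L^p$ after the same manipulations; in particular $\nu\Delta_x[v(x,\varphi/\sqrt\nu)]$ expands into pieces with at worst a factor $\sqrt\nu$ thanks to the regularity of $v$ and $\varphi$.

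\textbf{Singular terms and main obstacle.} Two terms require particular care. The first is $-\tfrac{1}{\sqrt\nu}(u^0\cdot \vec n)\p_z v$: using $u^0\cdot \vec n|_{\p\o}=0$ one writes $\tfrac{1}{\sqrt\nu}u^0\cdot \vec n = \tfrac{u^0\cdot \vec n}{\varphi}\cdot\tfrac{\varphi}{\sqrt\nu}$, which after the substitution $z=\varphi/\sqrt\nu$ becomes $\tfrac{u^0\cdot \vec n}{\varphi}\cdot z\,\p_z v$, an $L^p(\Omega)$-bounded quantity by Lemma 4.2 and Lemma 5.1. The second is the $1/\sqrt\nu$-singular boundary condition (5.3), which through the viscous integration by parts produces a boundary integral of the form $\sqrt\nu \int_{\p\o}\C_x u^b(t,x,0)\times \vec n \cdot |\mathbb{P}R^\nu|^{p-2}\mathbb{P}R^\nu\,dS$. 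Using the trace inequality of Lemma 2.1, namely $\|\mathbb{P}R^\nu\|_{p,\p\o}\le C\|\mathbb{P}R^\nu\|_p^{1-1/p}\|\mathbb{P}R^\nu\|_{1,p}^{1/p}$, together with Lemma 2.3 and Young's inequality, one absorbs this contribution into the bulk dissipation $\nu\bigl\|\nabla|\mathbb{P}R^\nu|^{p/2}\bigr\|_2^2$ plus lower-order terms. Collecting everything one arrives at the differential inequality
\[
\frac{d}{dt}\|\mathbb{P}R^\nu\|_p^p + C_0\nu\bigl\|\nabla|\mathbb{P}R^\nu|^{p/2}\bigr\|_2^2 \le C + C\|\mathbb{P}R^\nu\|_p^p,
\]
and Gronwall's lemma combined with $\mathbb{P}R^\nu|_{t=0}=0$ yields (5.8). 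The main obstacle is the simultaneous absorption of the singular boundary contribution from (5.3) and the near-singular volume term (i) above, and the restriction $3<p\le 6$ enters through the Sobolev embeddings ($W^{1,p}\hookrightarrow L^\infty$ with $p>3$ and $H^1\hookrightarrow L^6$ with $p\le 6$) used in controlling the trace term and the cross-interaction with $(I-\mathbb{P})R^\nu$.
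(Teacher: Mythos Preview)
Your sketch has the right architecture, but there is a real gap in the handling of the convective term. When you apply $\mathbb{P}$ to (5.1) and then test against $g=|\mathbb{P}R^\nu|^{p-2}\mathbb{P}R^\nu$, the resulting term is $\int_\Omega \mathbb{P}(u^\nu\!\cdot\!\nabla\,\mathbb{P}R^\nu)\cdot g\,\dif x$, and this is \emph{not} zero: for $p>2$ the test function $g$ is not divergence--free, so you cannot drop the projector. Equivalently (by self--adjointness of $\mathbb{P}$), one is testing $u^\nu\!\cdot\!\nabla R^\nu$ against $\mathbb{P}g=g+\nabla Q$, and while $\int (u^\nu\!\cdot\!\nabla\,\mathbb{P}R^\nu)\cdot g=0$ by the mechanism you describe, the correction $\int (u^\nu\!\cdot\!\nabla\,\mathbb{P}R^\nu)\cdot\nabla Q$ survives and must be estimated. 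A second, smaller slip: $\nabla_x q$ in (5.1) is only the partial $x$--gradient of $q(t,x,z)$ (the $\partial_z q$ piece has already been consumed in (3.5)--(3.7)), so $\mathbb{P}$ does \emph{not} annihilate it either; fortunately $\|\nabla_x q\|_p$ is bounded, so this one is harmless.

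The paper addresses the $\nabla Q$ correction head--on: it tests (5.1) directly against $\mathbb{P}(|\mathbb{P}R^\nu|^{p-2}\mathbb{P}R^\nu)=g+\nabla Q$ and records the estimates (5.10) on $\nabla Q,\nabla^2 Q$. Tracking the $\nabla Q$ contribution through the transport term (the computation $B_{111}$--$B_{112}$), using the ansatz $u^\nu-u^0=\sqrt{\nu}u^b+\nu v+\nu R^\nu$ rather than the Masmoudi--Rousset bound, produces a \emph{superlinear} right--hand side $\nu\|\mathbb{P}R^\nu\|_p^{p(p-1)/(p-3)}$; this is closed by the local--in--time Gronwall lemma (Lemma~2.6(c)), and is the actual origin of the smallness requirement $\nu\le\nu_0$. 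Your route can in fact be salvaged to a linear inequality: after the double integration by parts one gets $\int(\mathbb{P}R^\nu\!\cdot\!\nabla u^\nu)\cdot\nabla Q$, which is $\le C\|\nabla u^\nu\|_\infty\|\mathbb{P}R^\nu\|_p\|\nabla Q\|_{p'}\le C\|\mathbb{P}R^\nu\|_p^p$ by Theorem~2.8 and (5.10). That would be a genuine simplification over the paper's argument, but it requires introducing and controlling $\nabla Q$ explicitly---something your proposal currently omits.
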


The rest of this section is devoted to the proof of Lemma 5.4. In order to avoid estimating the unknown pressure term $\nabla_x k$, one needs to take inner product of (5.1) with $\P(|\P\rr|^{p-2}\P\rr)$. To simplify the computation, due to Lemma 5.2, we rewrite $\P(|\P\rr|^{p-2}\P\rr)$ as
\begin{equation}\tag{5.9}
\P(|\P\rr|^{p-2}\P\rr)=|\P\rr|^{p-2}\P\rr+\nabla Q
\end{equation}
with $Q=\int_\o\nabla_y N(x,y)\cdot |\P\rr|^{p-2}\P\rr dy$ satisfying
\begin{equation}\tag{5.10}
\left\{ \begin{aligned}
& \|\nabla Q\|_s \le C\|\P\rr\|^{(p-1)}_{(p-1)s}, \qquad 1<s<\infty,\\
& \|\nabla^2 Q\|_{\frac{p}{p-1}} \le C\|\nabla\left(|\P\rr|^{\frac{p}{2}}\right)\|_2 \cdot \|\P\rr\|_p^{\frac{p-2}{2}},\\
&\frac{\p Q}{\p \vec{n}}= 0~\mbox{on}~\p\o.
\end{aligned}
\right.
\end{equation}

It follows from (5.1), (5.9)-(5.10) that
\begin{equation}\tag{5.11}
\frac{1}{p}\frac{\dif}{\dif t}\|\mathbb{P}R^\nu\|_p^p-\nu\int_\Omega\triangle
R^\nu\cdot(|\mathbb{P}R^\nu|^{p-2}\mathbb{P}R^\nu+\nabla Q)
\mathrm{d}x = \sum\limits_{k=1}^{19} B_k,
\end{equation}
where
\begin{equation*}
\begin{array}{ll}
\displaystyle B_1=-\int_\Omega u^\nu\cdot\nabla
R^\nu\cdot\P(|\mathbb{P}R^\nu|^{p-2}\mathbb{P}R^\nu)
\mathrm{d}x, & \displaystyle B_2=-\int_\Omega R^\nu\cdot\nabla
u^0\P(|\mathbb{P}R^\nu|^{p-2}\mathbb{P}R^\nu)
\mathrm{d}x,
\\[3mm]
\displaystyle B_3=-\sqrt{\nu}\int_\Omega R^\nu\cdot \vec{n}\partial_zv
\cdot\P(|\mathbb{P}R^\nu|^{p-2}\mathbb{P}R^\nu)
\mathrm{d}x,
& \displaystyle B_4=-\int_\Omega R^\nu\cdot
\vec{n}\partial_zu^b\cdot\P(|\mathbb{P}R^\nu|^{p-2}\mathbb{P}R^\nu)
\mathrm{d}x,
\\[3mm]
\displaystyle B_5=-\sqrt{\nu}\int_\Omega R^\nu\cdot\nabla_xu^b
\cdot\P(|\mathbb{P}R^\nu|^{p-2}\mathbb{P}R^\nu)
\mathrm{d}x,
& \displaystyle B_6=-\int_\Omega
\partial_tv\cdot\P(|\mathbb{P}R^\nu|^{p-2}\mathbb{P}R^\nu)
\mathrm{d}x,\\[3mm]
\displaystyle B_7=\int_\Omega \triangle u^0
\cdot\P(|\mathbb{P}R^\nu|^{p-2}\mathbb{P}R^\nu)
\mathrm{d}x,
 & \displaystyle  B_8=\sqrt{\nu}\int_\Omega
\triangle_xu^b\cdot\P(|\mathbb{P}R^\nu|^{p-2}\mathbb{P}R^\nu)
\mathrm{d}x,\\[3mm]
\displaystyle B_9=\int_\Omega
2\vec{n}\cdot\nabla_x\partial_zu^b\cdot\P(|\mathbb{P}R^\nu|^{p-2}\mathbb{P}R^\nu)
\mathrm{d}x,
& B_{10}=\nu\int_\Omega
\triangle_x[v(x,\frac{\varphi(x)}{\sqrt{\nu}})]\cdot\P(|\mathbb{P}R^\nu|^{p-2}\mathbb{P}R^\nu)
\mathrm{d}x,
\end{array}
\end{equation*}
\begin{equation*}
\begin{array}{ll}
\displaystyle B_{11}=-\int_\Omega
u^\nu\cdot\nabla_xv\cdot\P(|\mathbb{P}R^\nu|^{p-2}\mathbb{P}R^\nu)
\mathrm{d}x,
 & B_{12}=-\int_\Omega v\cdot\nabla
u^0\cdot\P(|\mathbb{P}R^\nu|^{p-2}\mathbb{P}R^\nu)
\mathrm{d}x, \\[3mm]
\displaystyle B_{13}=-\frac{1}{\sqrt{\nu}}\int_\Omega u^0\cdot
\vec{n}\partial_zv\cdot\P(|\mathbb{P}R^\nu|^{p-2}\mathbb{P}R^\nu)
\mathrm{d}x,
 & B_{14}=-\sqrt{\nu}\int_\Omega v\cdot
\vec{n}\partial_z v\cdot\P(|\mathbb{P}R^\nu|^{p-2}\mathbb{P}R^\nu)
\mathrm{d}x,\\[3mm]
\displaystyle B_{15}=-\int_\Omega v\cdot
\vec{n}\partial_zu^b\cdot\P(|\mathbb{P}R^\nu|^{p-2}\mathbb{P}R^\nu)
\mathrm{d}x,
& B_{16}=-\int_\Omega u^b\cdot \nabla_x
u^b\cdot\P(|\mathbb{P}R^\nu|^{p-2}\mathbb{P}R^\nu)
\mathrm{d}x,\\[3mm]
\displaystyle B_{17}=\int_\Omega \triangle\varphi\cdot
\partial_zu^b\cdot\P(|\mathbb{P}R^\nu|^{p-2}\mathbb{P}R^\nu)
\mathrm{d}x,
& B_{18}=-\sqrt{\nu}\int_\Omega
v\cdot\nabla_xu^b\cdot\P(|\mathbb{P}R^\nu|^{p-2}\mathbb{P}R^\nu)
\mathrm{d}x,\\[3mm]
\displaystyle B_{19}=\int_\o (\nabla_x q)\cdot (|\P\rr|^{p-2} \P\rr+\nabla Q)\mathrm{d}x.
\end{array}
\end{equation*}

We need to estimate each term in (5.11). We first deal
with the term involving Laplacian:
\begin{equation*}
\begin{aligned}
&-\nu\int_\Omega \triangle
R^\nu\cdot(|\mathbb{P}R^\nu|^{p-2}\mathbb{P}R^\nu+\nabla Q)
\mathrm{d}x\\
=&-\nu\int_\Omega
\triangle(R^\nu)|\mathbb{P}R^\nu|^{p-2}\mathbb{P}R^\nu\mathrm{d}x \\
&-\nu\int_\Omega \triangle(\mathbb{P}R^\nu)\nabla Q\dif x
-\nu\int_\Omega \triangle((I-\mathbb{P})R^\nu)\nabla Q
\mathrm{d}x\\
:=&d_1+d_2+d_3.
\end{aligned}
\end{equation*}

Integration by parts leads to
\begin{equation}\tag{5.12}
\begin{aligned}
d_1=&\nu\int_\Omega (\nabla(R^\nu): \nabla (|\mathbb{P}R^\nu|^{p-2}\mathbb{P}R^\nu)) \mathrm{d}x-\nu \int_{\p\o}(\nabla\rr\cdot \vec{n}) \cdot (|\mathbb{P}R^\nu|^{p-2} \mathbb{P}R^\nu)\dif\sigma\\
\equiv & d_{11}+d_{12}
\end{aligned}
\end{equation}
Note that $|\nabla u|\geq |\nabla|u||$ for any vector $u$. Thus it is easy to derive that
\begin{equation}\tag{5.13}
\begin{aligned}
d_{11}=&\nu\int_\Omega (\nabla(\P R^\nu): \nabla (|\mathbb{P}R^\nu|^{p-2}\mathbb{P}R^\nu))\mathrm{d}x\\
&+ \nu\int_\Omega(\nabla((I-\P)\rr): \nabla(|\P\rr|^{p-2} \P\rr))\mathrm{d}x\\
= & \nu\int_\Omega |\nabla(\P\rr)|^2 |\P\rr|^{p-2} \mathrm{d}x + (p-2)\nu\int_\Omega (|\P\rr|^{\frac{p}{2}-1} |\nabla|\P\rr|)^2 \mathrm{d}x\\
& -\nu\int_\Omega(\nabla((I-\P)\rr): \nabla (|\P\rr|^{p-2} \P\rr))\mathrm{d}x\\
\ge &\frac{1}{2}\nu\int_\o |\nabla(\P\rr)|^2 |\P\rr|^{p-2}\dif x + \frac{(p-2)4}{p^2} \nu \int_\Omega |\nabla| \P\rr|^{\frac{p}{2}}|^2 dx\\
& -C\nu\int_\Omega |\nabla(I-\P)\rr|^2 |\P\rr|^{p-2} dx - C\nu\int_\Omega |\nabla (I-\P)\rr| |\P\rr| |\nabla(|\P\rr|^{p-2})|\mathrm{d}x\\
\ge & \frac{1}{2} \nu\int |\nabla(\P\rr)|^2 |\P\rr|^{p-2} dx + \frac{2(p-2)}{p^2}\nu \int_\Omega |\nabla|\P\rr|^{\frac{p}{2}}|^2 \mathrm{d}x\\
& -C\nu\int_\Omega |\nabla ((I-\P)\rr)|^2 |\P\rr|^{p-2} \mathrm{d}x\\
\ge & \frac{2(p-1)}{p^2} \nu\int|\nabla|\P\rr|^{\frac{p}{2}}|^2 dx - C\|\nabla((I-\P)\rr)\|^p_p - C\|\P\rr\|^p_p.
\end{aligned}
\end{equation}
Next, we handle the boundary term $d_{12}$. Note that due to (5.2), one has
\begin{equation}\tag{5.14}
\begin{aligned}
|d_{12}|=&\left|-\nu\int_{\p\o}(\nabla\rr\cdot\vec{n})\cdot(|\P\rr|^{p-2} \P\rr)\dif \sigma\right|\\
=&\left|-\nu\int_{\p\o} ((\nabla(\rr+v))\cdot\vec{n})\cdot (|\P\rr|^{p-2} \P\rr)\dif \sigma + \nu \int_{\p\o} (\nabla v\cdot\vec{n})\cdot (|\P\rr|^{p-2} \P\rr)\dif \sigma\right|\\
\leq & \nu \left|\int_{\p\o}((\nabla(\rr+v))\cdot\vec{n})\cdot(|\P\rr|^{p-2} \P\rr)\dif\sigma\right| + C\nu ||\nabla v||_{p,\p\o} ||\P\rr||^{p-1}_{p,\p\o}
\end{aligned}
\end{equation}
Since $(|\P\rr|^{p-2} \P\rr)\cdot\vec{n}|_{\p\o}=0$ and
$(\rr+v)\cdot\vec{n}|_{\p\o}=0$ due to (5.2), then it can be
verified that
\begin{equation}\tag{5.15}
\begin{aligned}
& \nu\int_{\p\o} ((\nabla(\rr+v))\cdot\vec{n})\cdot (|\P\rr|^{p-2} \P\rr)\dif\sigma\\
=& \nu\int_{\p\o} (\nabla\times(\rr+v)\times\vec{n})\cdot
(|\P\rr|^{p-2} \P\rr)\dif\sigma - \nu\int_{\p\o}
((D_x(\vec{n})(\rr+v)))\cdot (|\P\rr|^{p-1} \P\rr)\dif\sigma
\end{aligned}
\end{equation}
Note that it follows from the boundary condition (5.3) that
\begin{equation*}
\begin{aligned}
(\nabla\times(\rr+v))\times\vec{n}
=-(\nabla\times b)\times\vec{n} + (\nabla_x \times v)\times\vec{n}.
\end{aligned}
\end{equation*}
This and (5.15) show that
\begin{equation*}
\begin{aligned}
& \left| \nu\int_{\p\o} ((\nabla(\rr+v))\cdot\vec{n})\cdot (|\P\rr|^{p-2} \P\rr)d\sigma\right|\\
=& \bigg|\nu\int_{\p\o} (-(\nabla\times b)\times\vec{n})\cdot (|\P\rr|^{p-2} \P\rr)d\sigma + \nu\int_{\p\o}((\nabla_x\times v)\times\vec{n})\cdot (|\P\rr|^{p-2} \P\rr)d\sigma\\
& -\nu\int_{\p\o} (D_x(\vec{n})(\rr+v)) \cdot(|\P\rr|^{p-2} \P\rr)d\sigma\bigg|\\
\leq & \nu C\left((||b||_{1,p,\p\o} + ||v||_{1,p,\p\o}) ||\P\rr||^{p-1}_{p,\p\o} + ||\P\rr||^p_{p,\p\o} + ||(I-\P)\rr||_{p,\p\o} ||\P\rr||^{p-1}_{p,\p\o}\right)
\end{aligned}
\end{equation*}
This and (5.14) yield
$$|d_{12}|\le \nu C (||b||_{1,p,\p\o} + ||v(t,\cdot,0)||_{1,p,\p\o}) \|\P\rr\|_{p,\p\o}^{p-1} + C\nu\|(I-\P)\rr\|_p \|P\rr\|^{p-1}_{p,\p\o}.$$
Note that $||b||_{1,p,\p\o} \leq \nu^{-\frac{1}{2}} C$ and $ ||v(t,\cdot,0)||_{1,p,\p\o} \leq C$. One can obtain from Lemma 2.1, Lemma 5.3 and Young's inequality that
\begin{equation}\tag{5.16}
|d_{12}|\leq \varepsilon\nu||\nabla|\P\rr|^{\frac{p}{2}}||^2_2 + C||\P\rr||^p_p + C.
\end{equation}
Next, due to the formula $\Delta u=-\nabla\times(\nabla\times u)+\nabla(\di u)$, one has
\begin{equation}\tag{5.17}
\begin{aligned}
d_2+d_3=&-\int_{\o}\Delta(\P\rr)\nabla Q\dif x-\nu\int_\o \Delta((I-\P)\rr)\nabla Q\dif x\\
=&-\int_{\p\o}((\nabla\times(\P\rr))\times \vec{n})\nabla Q\dif x+\nu\int_{\o}\di((I-\P)\rr)\Delta Q\dif x
\end{aligned}
\end{equation}
where one has used the fact that $\nabla Q\cdot n|_{\p\o}=0$. Note that $(\nabla\times(\P\rr))\times\vec{n}=(\nabla\times\rr)\times\vec{n}$. It thus follows from a similar argument for (5.16) that
\begin{equation}\tag{5.18}
\begin{aligned}
&\left|-\int_{\p\o}((\nabla\times(\P\rr))\times\vec{n})\cdot \nabla Q\dif x\right|\\
\leq & C\nu(||b||_{1;p,\p\o} + ||v||_{1,p,\p\o} + ||\P\rr||_{p,\p\o} + ||(I-\P)\rr||_{p,\p\o})||\nabla Q||_{\frac{p}{p-1},\p\o}\\
\leq & C\nu(||b||_{1,p,\p\o} + ||v||_{1,p,\p\o} + ||\P\rr||_{p,\p\o} + ||(I-\P)\rr||_{p,\p\o})||\nabla(|\P\rr|^{\frac{p}{2}})||_{L^2} ||\P\rr||^{\frac{p-2}{2}}_p\\
\leq & C||\P\rr||^p_p + \varepsilon\nu||\nabla(|\P\rr|^{\frac{p}{2}})||^2_2+C,
\end{aligned}
\end{equation}
where one has used (5.7), (5.10), Lemma 2.1 and Lemma 2.3. Furthermore,
\begin{equation}\tag{5.19}
\begin{aligned}
\left|\nu\int_\o\di((I-\P)\rr)\Delta Q\dif x\right| \leq & \nu  |(\di(I-\P)\rr)||_p ||\Delta Q||_{\frac{p}{p-1}}\\
\leq & C||\P\rr||^p_p + \varepsilon\nu||\nabla |\P\rr|^{\frac{p}{2}}||^2_2
\end{aligned}
\end{equation}
where (5.10) and (5.7) have been used. Thus it holds that
\begin{equation}\tag{5.20}
|d_2+d_3| \leq C||\P\rr||^p_p + \varepsilon\nu|| |\P\rr|^{\frac{p}{2}}||^2_2+C.
\end{equation}
Now we turn to the estimates of $B_i$ ($1\leq i\leq 18$).

\textbf{Estimate of $B_1$}: We first rewrite $B_1$ as
\begin{equation*}
\begin{aligned}
B_1=&-\int_\Omega((u^\nu-u^0)\nabla R^\nu)\cdot(|\P\rr|^{p-2} \P\rr+\nabla Q)\dif x\\
&-\int_\Omega (u^0\nabla R^\nu)\cdot (|\P R^\nu|^{p-2}\P R^\nu+\nabla Q)\dif x\\
\equiv & B_{11}+B_{12}.
\end{aligned}
\end{equation*}
Then it follows from integration by parts many times and the
boundary condition $u^0\cdot\vec{n}|_{\p\o} =
\P\rr\cdot\vec{n}|_{\p\o}=0$ that
\begin{equation}\tag{5.21}
\begin{aligned}
|B_{12}|=& \left|-\int_\o u^0\cdot\nabla(\P\rr+(I-\P)\rr)\cdot(||\P\rr||^{p-2} \P\rr+\nabla Q)\dif x\right|\\
=& \bigg|-\int_\o u^0\nabla\left(\frac{1}{p}|\P\rr|^p\right)\dif x - \int_\o(u^0\nabla\P\rr)\nabla Q\dif x\\
& - \int_\o(u^0\nabla(I-\P)\rr)\cdot (|\P\rr|^{p-2} \P\rr+\nabla Q)dx\bigg|\\
\leq & \left| \int_\o(\P\rr\cdot\nabla u^0)\nabla Q\dif x\right| + \left|\int_\o(u^0\nabla(I-\P)\rr)\cdot (|\P\rr|^{p-2} \P\rr+\nabla Q)dx\right|\\
\leq & C||\P\rr||_p ||\nabla Q||_{\frac{p}{p-1}} + C||\nabla(I-\P)\rr||_p \left(||\P\rr||^{p-1}_p + ||\nabla Q||_{\frac{p}{p-1}}\right)\\
\leq & C+||\P\rr||^p_p
\end{aligned}
\end{equation}
where one has used the regularity of $u^0$, (5.10) and Lemma 5.3. Next we estimate $B_{11}$. Rewrite $B_{11}$ as
\begin{equation}\tag{5.22}
\begin{aligned}
|B_{11}|\leq & \left|\int_\Omega ((u^\nu-u^0)\nabla R^\nu) \cdot|\P R^\nu|^{p-2} \P R^\nu\dif x\right| + \left|\int_\Omega
((u^\nu-u^0)\nabla R^\nu)\nabla Q\dif x\right|\\
\equiv & B_{111}+B_{112}.
\end{aligned}
\end{equation}
It follows from (3.1) that
\begin{equation}\tag{5.23}
\begin{aligned}
B_{111}
= & \left|\int_\o((u^\nu-u^0)\nabla(I-\P)\rr) |\P\rr|^{p-2} \P\rr \dif x \right|\\
\leq &(\sqrt{\nu}\|u^b\|_\infty + \nu\|v\|_\infty) \|(I-\P)R^\nu\|_{1,p} \|\P\rr\|_p^{p-1}\\
& +\nu\int_\o|\nabla(I-\P)\rr| \|\P\rr\|^p \dif x + \nu\int_\o |(I-\P)\rr| |\nabla(I-\P)\rr| |\P\rr|^{p-1} \dif x\\
\leq & C+\nu \|\P\rr\|^p_{\frac{p^2}{p-1}} + C\|\P\rr\|^p_p\\
\leq & \nu\|\nabla|\P\rr|^{\frac{p}{2}}\|^{\frac{3}{p}}_2 \|\P\rr\|_p^{\frac{2p-3}{2}} + \|\P\rr\|_p^p +C
\end{aligned}
\end{equation}
Similarly, the term $B_{112}$ can be estimated as follows
\begin{equation*}
\begin{aligned}
B_{112}\leq & \left|\int_\Omega\di((u^\nu-u^0)\otimes\nabla\P\rr)\cdot\nabla Q\dif x\right|\\
&+\left|\int_{\o}(\sqrt{\nu}u^b+\nu v+\nu\rr)\cdot\nabla(I-\P)\rr\cdot\nabla Q\dif x\right|\\
\le & \left| \int_\o(\sqrt{\nu}u^b+\nu v+\nu\rr)\otimes\P\rr:\nabla^2 Q\dif x \right| + (\sqrt{\nu}\|u^b\|_\infty+\nu\|v\|_\infty)\|(I-\mathbb{P})R^\nu\|_{1,p}\|\nabla Q\|_{p'}\\
&+\nu|\P\rr|_{\frac{p^2}{p-1}}\|(I-\mathbb{P})R^\nu\|_{1,p}\|\nabla Q\|_{\frac{p^2}{(p-1)^2}}+\nu\|(I-\P)\rr\|_\infty\|\nabla(I-\P)\rr\|_p\|\nabla Q\|_{p'}\\
\le & C\|\P\rr\|_p^p+\e\nu\|\nabla|\P\rr|^{\frac{p}{2}}\|_2^2+C\nu\|\P\rr\|^{p\frac{p-1}{p-3}}.
\end{aligned}
\end{equation*}

Therefore, we obtain from (5.21)-(5.23) that
\begin{equation}\label{5.9}\tag{5.24}
B_1\le
C\|\P\rr\|_p^{p}+C\nu\|\P\rr\|_p^{p\frac{p-1}{p-3}}+C+\e\nu\|\nabla|\P\rr|^{\frac{p}{2}}\|_2^2.
\end{equation}

\textbf{Estimate of $B_2$+$B_4$}:

Due to the regularity of $u^0$ and the uniform bound for $\p_z u^b$, one can get from the estimate (5.10) that
\begin{equation}\label{5.10}\tag{5.25}
|B_2+B_4|\le \|\P\rr\|_p^p+C.
\end{equation}

\textbf{Estimate of $B_3+B_5$}:

It follows from the construction of $v$ in [12] that
\begin{equation}\tag{5.26}
\begin{aligned}
|B_3+B_5|\le &C\sqrt{\nu}\int_\Omega |R^\nu| |\nabla_x u^b| |\P(|\P\rr|^{p-2} \P\rr)|\dif x\\
\leq & C\sqrt{\nu}\int_\o |\P\rr| |\nabla_x u^b| |\P(|\P\rr|^{p-2} \P\rr)|\dif x\\
& + C\sqrt{\nu}\int_\o|(I-\P)\rr| |\nabla_x u^b| |\P(|\P\rr|^{p-2} \P\rr)|\dif x\\
\equiv & J_1+J_2.
\end{aligned}
\end{equation}
Note that for $p>3$, $\|\nabla_x u^b\|_{2p}\le C\|u^b\|_{1,3,1}$  and
$$ \|(I-\mathbb{P})R^\nu\|_{2p}\leq C\|(I-\mathbb{P})R^\nu\|_{1,p}\le C,$$
due to (5.7) in Lemma 5.3. Thus, one can derive from this and (5.10) that
\begin{equation*}
\begin{aligned}
J_1\le & C\sqrt{\nu}\|\mathbb{P}R^\nu\|_{2p} \|\mathbb{P}R^\nu\|_{p}^{p-1}\\
\leq &
C\sqrt{\nu}\|\nabla|\mathbb{P}R^\nu|^{\frac{p}{2}}\|^{\frac{2}{p}}_{2}\|\mathbb{P}R^\nu\|_{p}^{p-1}\\
\leq &
C\nu^{\frac{p}{2}}\|\nabla|\mathbb{P}R^\nu|^{\frac{p}{2}}\|_2^2+C\|\P\rr\|_p^p\\
\end{aligned}
\end{equation*}
and
\begin{equation*}
\begin{aligned}
J_2 \leq & C\sqrt{\nu}\|(I-\mathbb{P})R^\nu\|_{2p}\|\mathbb{P}R^\nu\|_{p}^{p-1}\\
\le & C\|\P\rr\|_p^p+C\nu^{\frac{p}{2}}.
\end{aligned}
\end{equation*}
Combining this with (5.26) yields that
\begin{equation}\label{5.12}\tag{5.27}
|B_3+B_5|\leq
C\|\mathbb{P}R^\nu\|_p^p+C\nu^{\frac{p}{2}}\|\nabla|\mathbb{P}R^\nu|^{\frac{p}{2}}\|_2^2+C\nu^{\frac{p}{2}}.
\end{equation}

\textbf{Estimate of $B_6$}:

Due to the construction of $v(t,x,z)=-\vec{n}\int^{+\infty}_z \di_x u^b \dif z$, one can rewrite $B_6$ as
\begin{equation}\label{5.13}\tag{5.28}
\begin{aligned}
|B_6|=& \left|\int_\Omega\int_{\frac{\varphi(x)}{\sqrt{\nu}}} \left( \partial_t \di_x u^b(t,x,z) \dif z\right)\vec{n} \cdot \P(|\P\rr|^{p-2} \P\rr)\dif x \right|\\
\leq & \left| \int_\Omega\int_{\frac{\varphi(x)}{\sqrt{\nu}}}^\infty
\di_x \{[u^b\cdot\nabla u^0 + u^0\cdot\nabla_x u^b]_{tan}\} \dif
z\vec{n}\cdot
\P(|\mathbb{P}R^\nu|^{p-2}\P\rr)\dif x\right|\\
&+\left|\int_\Omega\int_{\frac{\varphi(x)}{\sqrt{\nu}}}^\infty
\di_x [fz\cdot\partial_z u^b]\dif z \vec{n}\cdot\P(|\mathbb{P}R^\nu|^{p-2}\P\rr)\dif x\right|\\
&+\left| \int_\Omega\int_{\frac{\varphi(x)}{\sqrt{\nu}}}^\infty \di_x \partial_z^2 u^b\dif z\vec{n}\cdot\P(|\mathbb{P}R^\nu|^{p-2}\P\rr)\dif x\right|\\
\equiv & B_{61} + B_{62} + B_{63},
\end{aligned}
\end{equation}
where one has used (3.6). Due to the regularity estimates of $u^b$ and $u^0$ and Lemma 5.2, one has
\begin{equation}\tag{5.29}
B_{61}\leq C\|\mathbb{P}R^\nu\|_p^p + C\|u^b\|_{1,3,0}^p.
\end{equation}
Similarly, the regularity of $f$ and $\partial_x\partial_zu^b$ yield
\begin{equation}\tag{5.30}
B_{62}\leq C\|\mathbb{P}R^\nu\|_p^p+C\|u^b\|^p_{1,2,1}.
\end{equation}
Finally,
\begin{equation}\tag{5.31}
\begin{aligned}
B_{63}\leq C\|\di_x\partial_z
u^b|_{z=\frac{\varphi(x)}{\sqrt{\nu}}}\|_p
\|\mathbb{P}R^\nu\|_p^{p-1} \le ||\P\rr||^p_p +  C\|u^b\|_{1,1,1}^p.
\end{aligned}
\end{equation}
Consequently, one has
\begin{equation}\label{5.14}\tag{5.32}
\begin{aligned}
|B_6|
&\leq
C\|\mathbb{P}R^\nu\|_p^{p}+C.
\end{aligned}
\end{equation}
Similar analysis yields that
\begin{equation}\label{5.15}\tag{5.33}
|B_7+B_8+B_9|\le C\|\mathbb{P}R^\nu\|_p^{p}+C\sqrt{\nu}\|u^b\|_{1,2,0}^p+\|u^b\|_{1,2,1}^p+C.
\end{equation}

\textbf{Estimate of $B_{10}$}:

By integration by parts, one can rewrite $|B_{10}|$ as
\begin{equation}\label{5.16}\tag{5.34}
\begin{aligned}
|B_{10}|\le &\left| \nu\int_\Omega\nabla_x\left[ v \left( x,\frac{\varphi(x)}{\sqrt{\nu}}\right)\right]: (\nabla(|\mathbb{P}R^\nu|^{p-2} \P\rr) + \nabla^2 Q)\dif x\right|\\
&+\nu \left| \int_{\p\o}\p_n \left( v \left( t,x,\frac{\varphi(x)}{\sqrt{\nu}}\right) \right) \cdot (|\mathbb{P}R^\nu|^{p-2} \P\rr + \nabla Q) \dif\sigma \right|\\
\equiv & B_{101}+B_{102}.
\end{aligned}
\end{equation}
It follows from the regularity estimates of $v$, Lemma 5.1 and its analysis, Young's inequality, and (5.10) that
\begin{equation}\tag{5.35}
\begin{aligned}
|B_{101}|\leq &
C\nu\int_\Omega \left| \nabla_x \left[ v \left( x,\frac{\varphi(x)}{\sqrt{\nu}}\right) \right] \right|^2 |\mathbb{P}R^\nu|^{p-2} \dif x
+ C\nu\int_\Omega |\nabla\mathbb{P}R^\nu|^2 |\mathbb{P}R^\nu|^{p-2} \dif x\\
&+ \nu \left\| \nabla \left[ v \left( x,\frac{\varphi(x)}{\sqrt{\nu}}\right) \right] \right\|_p \|\nabla^2 Q\|_{\frac{p}{p-1}}\\
\leq & C\|\P\rr\|_p^p + \e\nu \|\nabla|\P\rr|^\frac{p}{2}\|_2^2 + C.
\end{aligned}
\end{equation}
Due to the construction of $v(t,x,\frac{\varphi(x)}{\sqrt{\nu}})=\bar{v}\vec{n}$ with $\bar{v}$ being a scalar function given by $\bar{v}(t,x,z)=-\int^\infty_z \di_x u^b(t,x,z)\dif y$, one has $\p_n v=\p_n\bar{v}\vec{n} +\bar{v}\p_n\vec{n}$. Since $\P(|\P\rr|^{p-2} \P\rr)\cdot\vec{n}=0$ on $\p\o$, so
\begin{equation}\tag{5.36}
\begin{aligned}
B_{102}= & \nu\left| \int_{\partial\Omega}(\bar{v}(x,0)\partial_n\vec{n})\cdot (|\mathbb{P}R^\nu|^{p-2}\mathbb{P}R^\nu+\nabla Q)\dif\sigma\right|\\
= & \nu\left|\int_\Omega \di[(\bar{v}(x,0)\partial_n\vec{n}\cdot (|\mathbb{P}R^\nu|^{p-2}\mathbb{P}R^\nu+\nabla Q))\vec{n}]\dif x\right|\\
\leq & \nu \int_\Omega |\nabla[\bar{v}(x,0)\partial_n \vec{n}\cdot\vec{n}]| |(\mathbb{P}R^\nu|^{p-2}\mathbb{P}R^\nu+\nabla Q)|\dif x\\
& + \nu\int_\o |\vec{v}(x,0)\p_n\vec{n}| |\vec{n}| |\nabla(|\P\rr|^{p-2} \P\rr+\nabla Q)|\dif x\\
\leq & C\|\mathbb{P}R^\nu\|_p^p + \e\nu\|\nabla|\P\rr|^{\frac{p}{2}}\|^2_2+C.
\end{aligned}
\end{equation}
Therefore,
\begin{equation} \label{5.17}\tag{5.37}
\begin{aligned}
|B_{10}|\leq
C\|\P\rr\|_p^p + \e\nu\|\nabla|\P\rr|^\frac{p}{2}\|_2^2+C.
\end{aligned}
\end{equation}

\textbf{Estimate of $B_{11}$}:

It follows from the regularity estimates for $u^0$ and $u^b$ and (5.10) that
\begin{equation}\label{5.18}\tag{5.38}
\begin{aligned}
|B_{11}| \leq & \left| \int_\o (u^0+\sqrt{\nu} u^b + \nu v+\nu R^\nu) \cdot \nabla_x v\cdot (|\P\rr|^{p-2}\P\rr+\nabla Q) \dif x\right|\\
\le & C(\|u^0\|_{\infty}\|\nabla_x v\|_p + \sqrt{\nu} \|u^b\|_{2p} \|\nabla_x v\|_{2p} + \nu \|v\|_{2p} \|\nabla_x v\|_{2p}) (\|\mathbb{P}R^\nu\|_p^{p-1} + \|\nabla Q\|_{\frac{p}{p-1}})\\
& +\nu\|\rr\|_{2p} \|\nabla_x v\|_{2p} (\|\P\rr\|_p^{p-1} + \|\nabla Q\|_{\frac{p}{p-1}})\\
\le & C (\|u^0\|_{\infty} + \|u^b\|_{1,3,0} + \sqrt{\nu}\|u^b\|_{1,3,1} + \|u^b\|_{1,2,0,2p})^p\\
&+\nu\|\P\rr\|_{2p} \|u^b\|_{1,2,0,2p} \|\P\rr\|_p^{p-1} + C\|\P\rr\|_p^p\\
&\leq \e\nu \|\nabla|\P\rr|^{\frac{p}{2}}\|_2^2 + C\|\P\rr\|_p^p + C_2.
\end{aligned}
\end{equation}

\textbf{Estimate of $B_{12}+\sum_{i=14}^{18}B_{i}$}:

Applying similar analysis and using the bounds on $u^b$, $\p_z u^b$, $u^0$ and $v$, we can get
\begin{equation}\label{5.19}\tag{5.39}
\left| B_{12}+\sum_{i=14}^{18}B_{i}\right| \leq C\|\P\rr\|_p^p+C.
\end{equation}

\textbf{Estimate of $B_{13}$}:

It follows from the definition of $v$, (4.1) and (5.10) that
\begin{equation*}
\begin{aligned}
|B_{13}|= & \left| -\frac{1}{\nu} \int_\Omega (u^0\cdot \vec{n}) \di_x u^b\vec{n} \cdot (|\P\rr|^{p-2} \P\rr + \nabla Q)\dif x\right|\\
= & \left|\int_\o \frac{u^0\cdot\vec{n}}{\varphi} (z \di_x u^b)\bigg|_{z=\frac{\varphi}{\sqrt{\nu}}} \vec{n} \cdot (|\P\rr|^{p-2} \P\rr + \nabla Q) \dif x \right|\\
\leq & C \|(1+z)\di_x u^b|_{z=\frac{\varphi(x)}{\sqrt{\nu}}} \|_p \cdot \|\mathbb{P}R^\nu\|_p^{p-1}\\
\leq & C \nu^{\frac{1}{2p}} \|u^b\|_{1,2,0,p} \|\P\rr\|^{p-1}_p\\
\leq & C \|\P\rr\|^p_p + C\nu^{\frac{1}{2}}.
\end{aligned}
\end{equation*}

\textbf{Estimate of $B_{19}$}:

Finally, we estimate $B_{19}$. Since
\begin{equation*}
\nabla_x q=-\int_z^\infty((\nabla_xu^b(z))\cdot\nabla
u^0+u^0\cdot(\nabla_x^2 u^b(z)))\cdot \vec{n}+(u^b\cdot\nabla
u^0+u^0\cdot\nabla_xu^b)\cdot\nabla_x \vec{n} \mathrm{d}z.
\end{equation*}
Then
\begin{equation*}
\|\nabla_x q\|_p\leq C\|u^b\|_{L^1_z(R_+,W^{2,p}(\Omega))}\leq
C\|u^b\|_{1,3,1}.
\end{equation*}
Therefore
\begin{equation}\label{5.21}\tag{5.21}
\begin{aligned}
B_{19} \leq C\|\mathbb{P}R^\nu\|_p^{p}+C_6.
\end{aligned}\end{equation}

 Collecting (5.12), (5.13), (5.16), (5.20) and all the estimates on $B_i(i=1,\cdots,19)$, one deduces from
(5.11) that, for suitably small $\e$, it holds that
\begin{equation}\label{5.22}\tag{5.22}
\begin{aligned}
\frac{d}{dt}\|\mathbb{P}R^\nu\|_p^{p}+c_0\nu\|\nabla|\P\rr|^{\frac{p}{2}}\|_2^2\leq
C\|\mathbb{P}R^\nu\|_p^{p}+C+\nu\|\P\rr\|_p^{p\frac{p-1}{p-3}}.
\end{aligned}
\end{equation}
Since $\frac{p-1}{p-3}>1$, so part (c) of Gronwall's Lemma implies
that there exists a small $0<\nu_0< 1$ such that for all
$0<\nu\le\nu_0$, the desired estimate (5.8) in Lemma 5.4 holds. This
completes the proof of Lemma 5.4.

\section{$H^1$-estimates of the remainder $R^\nu$}

In this section, we will derive the $H^1$-estimates for the
remainder $R^\nu$. To this end, we need to apply Lemma 2.4 to handle
the boundary terms in the $L^2$-estimate of the vorticity. However,
it is noted that $R^\nu$ does not satisfy the conditions in Lemma
2.4. To overcome this difficulty, we set
\begin{equation}
R(t,x)=R^\nu(t,x)+b(t,x)
\end{equation}
where $b(t,x)$ is defined in Section 5. It then follows from (5.1)-(5.4) that $R(t,x)$ solves the following system
\begin{equation}
\begin{aligned}
&\partial_tR-\nu \triangle R+u^\nu\cdot\nabla
R+R\cdot\nabla u^0+\sqrt{\nu}R\cdot \vec{n}\partial_z
v+R\cdot \vec{n}\partial_zu^b+\sqrt{\nu}R\cdot
\nabla_xu^b\\
=&R.H.S. +\p_t b-\nu\triangle b+u^\nu\cdot\nabla b+b\cdot\nabla u^0 +\sqrt{\nu} b\cdot\vec{n}\p_z v+b\cdot\vec{n}\p_z u^b+\sqrt{\nu} b\cdot\nabla_x u^b,
\end{aligned}
\end{equation}
$$\mathrm{div}R(t,x)=-
 \mathrm{div}_x v\left( t,x,\frac{\varphi(x)}{\sqrt{\nu}}\right) +\mathrm{div} b(t,x),$$
with boundary and initial conditions as
\begin{alignat}{12}
& R\cdot \vec{n}=0, \qquad (\C R)\times\vec{n}=0, \qquad \mbox{on}~~\partial\Omega,\tag{6.3}\\
& R(0,x)=0, \qquad \mbox{in}~~\Omega.\tag{6.4}
\end{alignat}
Here R.H.S. is defined in (5.1).

It follows from the analysis in Section 5 and in [12] that
\begin{alignat}{12}
& ||R^\nu||_{L^\infty(0,T; L^p(\o))} \leq C, \qquad 3<p\leq 6,\tag{6.5}\\
& ||b||_{L^\infty(0,T; H^1)} \leq C\nu^{-\frac{1}{2}},\tag{6.6}\\
& ||R||_{L^\infty(0,T; L^2(\o))} + ||\mathrm{div}\,R||_{L^\infty(0,T; L^2(\o))} \leq C\nu^{-\frac{1}{2}}.\tag{6.7}
\end{alignat}
It follows from (6.3), (6.5)-(6.8), and Lemma 2.2 that
\begin{equation}\tag{6.8}
||R||_{1,2} \leq ||\nabla\times R||_2 + C\nu^{-\frac{1}{2}}
\end{equation}
Therefore, it suffices to estimate $||\nabla\times R||_{L^\infty(0,T; L^2(\o))}$. Set $w=\nabla\times R$. Then (6.2) implies that
\begin{equation}\tag{6.9}\begin{aligned}
&\partial_t\om-\nu \Delta \om+\C(u^\nu\cdot\nabla
R)+\C(R\cdot\nabla u^0)+\\
&\sqrt{\nu}\C(R\cdot \vec{n}\partial_z
v)+\C(R\cdot \vec{n}\partial_zu^b)+\sqrt{\nu}\C(R\cdot
\nabla_xu^b)\\
=&\C R.H.S+\C\p_tb-\nu\Delta\C b+\C(u^\nu\cdot\nabla b)+\C(b\cdot\nabla u^0)\\
&+\sqrt{\nu}\C(b\cdot \vec{n}\p_z v)+\C(b\cdot \vec{n}\p_z u^b)+\sqrt{\nu} \C(b\cdot\nabla u^b).
\end{aligned}
\end{equation}
Multiply (6.9) by $\om$ and integrate on $\o$ to get
\begin{equation}\tag{6.10}
\frac{\dif}{\dif t}\|\om\|^2-\nu\int_{\o}\Delta\om\cdot\om\dif x=\int_{\o} \C (R.H.S.)\cdot\om\dif x+\sum^{12}_{i=1}E_i.
\end{equation}
First, we estimate the second term on the left hand side. Integration by parts yields
$$-\nu\int_{\o}\Delta\om\cdot\om\dif x=+\nu\int_{\o}|\nabla\om|^2\dif x-\nu\int_{\p\o}(\vec{n}\cdot\nabla\om)\cdot\om\dif\sigma.$$
It follows from (6.3) and Lemma 2.4 that
$$\nu\left|\int_{\p\o}(\vec{n}\cdot\nabla\om)\cdot\om\dif\sigma\right|\leq C\nu\int_{\p\o} |\om|^2\dif\sigma +C\nu \left|\int_{\p\o} \sum^3_{n=1} [\om\times\nabla[\om\times\vec{n}]_n]_n\dif\sigma\right|.$$
Since $\om\times\vec{n}|_{\p\o}=0$, so $\nabla[\om\times\vec{n}]_n$ is parallel to $\vec{n}$, thus $[\om\times\nabla[\om\times\vec{n}]_n]_n|_{\p\o}=0$. Hence the last term above is zero. It follows from this, Lemma 2.1 and Lemma 2.3 that
$$\nu\left|\int_{\p\o}(\vec{n}\cdot\nabla\om)\cdot\om\dif\sigma\right|\leq \e\nu\int_\o|\nabla\om|^2 \dif x+C(\e)\nu\int_\o|\om|^2\dif x.$$
Consequently, one gets
\begin{equation}\tag{6.11}
-\nu\int_\o\Delta\om\cdot\om\dif x \geq(1-\e)\nu\int_\o|\nabla\om|^2 \dif x - C\nu\int_\o|\om|^2\dif x.
\end{equation}
Next, we estimate the terms on the right hand side of (6.10).

First, due to (6.2), (6.7), Lemma 2.3, and Theorem 2.8, one can get
\begin{equation}\tag{6.12}
\begin{aligned}
E_1=&\int_\o(\nabla\times(u^\nu\cdot\nabla R))\cdot\om\dif x\leq\int_\o|\nabla u^\nu|\,|\nabla R|\,|\om|\dif x+\left|\int_\o (u^\nu\cdot\nabla \om)\cdot\om\dif x\right|\\
=&\int_{\o}|\nabla u^\nu|\,|\nabla R|\,|\om|\dif x\leq ||\nabla u^\nu||_{\infty} ||\nabla R||_2\,||\om||_2\\
\leq & C\|\nabla u^\nu\|_{\infty} (\|\om\|_2 +\|\mathrm{div}\,R\|_2) \|\om\|_2\\
\leq & C\|\nabla u^\nu\|_{\infty} \|\om\|_2^2 + C\nu^{-1}.
\end{aligned}
\end{equation}
Due to Lemma 2.3, (6.5) and (6.6), one has
\begin{equation}\tag{6.13}
\begin{aligned}
|E_2|=&\left|\int_\o \nabla\times(R\cdot\nabla u^0)\cdot \om\dif x\right| \le \int_\o|\nabla R|\,|\nabla u^0|\,|\om|\dif x+ \left|\int_\o (R\cdot\nabla \C u^0)\cdot \om\dif x\right|\\
\le & \|\nabla u^0\|_{\infty} \|\nabla R\|_2\,\|\om\|_2+C\|R\|_6\,\|\nabla \C u^0\|_3\,\|\om\|_2\\
\le & C \|\om\|_2^2+ C\nu^{-1}.
\end{aligned}
\end{equation}
Next, note that
\begin{equation*}
\begin{aligned}
E_3&=\sqrt{\nu}\int_\o\nabla\times(R\cdot \vec{n}\p_z v)\cdot\om\dif x\\
=&\sqrt{\nu}\left(\int_\o (R\cdot \vec{n})(\nabla\times\p_z v)\cdot\om\dif x+\int_\o \p_z v\times(\nabla(R\cdot \vec{n}))\cdot\om \dif x\right)\\
\equiv& K_1+K_2.
\end{aligned}
\end{equation*}
One has by Lemma 2.3 and (6.7) that
\begin{equation*}
\begin{aligned}
|K_2|=&\sqrt{\nu}\left| \int_\o \nabla(R\cdot \vec{n})\times\p_z v\cdot\om \dif x\right|\\
\le & \sqrt{\nu} \|\nabla (R\cdot \vec{n})\|_2\,\|\p_z v\|_6\,\|\om\|_3 \le C\sqrt{\nu}\|\om\|^\frac{1}{2}_2\|\nabla\om\|^{\frac{1}{2}}_2 \|\nabla (R\cdot \vec{n})\|_{2}\,\|u^b\|_{1,2,1}\\
\le & \frac{1}{2}\nu\e\|\nabla\om\|^2_2 + C\nu^{\frac{1}{3}} \|\om\|^{\frac{2}{3}}_2 (\|\om\|^{\frac{2}{3}}_2+\|\mathrm{div}\,R\|^{\frac{2}{3}}_2)\\
\leq & \frac{1}{2} \nu\e\|\nabla\om\|^2_2 + C\|\om\|^2_2+C.
\end{aligned}
\end{equation*}
Noting that $\nabla_x\varphi=\vec{n}$, one can get
\begin{equation*}
\begin{aligned}
|K_1|=&\sqrt{\nu} \left|\int_\o R\cdot \vec{n} (\nabla\times\p_z v)\cdot\om\dif x\right|\\
=& \sqrt{\nu} \left|\int_\o (R\cdot \vec{n})\cdot(\nabla\times(\p_z \bar{v}\vec{n}))\cdot\om \dif x\right|\\
=& \sqrt{\nu} \left|\int_\o (R\cdot \vec{n})(\p_z\bar{v}(\nabla\times\vec{n})+\vec{n}\times\nabla(\p_z\bar{v}))\cdot\om\dif x\right|\\
=& \sqrt{\nu} \left|\int_\o (R\cdot \vec{n})((\p_z\bar{v})(\nabla\times\vec{n})+\vec{n}\times(\nabla_x\p_z\bar{v} + \frac{1}{\sqrt{\nu}} (\p^2_z\bar{v})\vec{n}))\cdot\om \dif x\right|\\
\le & \sqrt{\nu} \left| \int_\o (R\cdot\vec{n})((\p_z\bar{v})\nabla\times\vec{n})\cdot\om\dif x\right| + \sqrt{\nu} \left| \int_\o (R\cdot\vec{n})(\vec{n}\times\nabla_x\partial_z\bar{v})\om\dif x\right|\\
\le & \sqrt{\nu}\|R\|_6 (\|\p_z\bar{v}(\nabla\times\vec{n})\|_2 + \|\vec{n}\times(\nabla_x\partial_z\bar{v})\|_2) \|\om\|_3\\
\leq & C\sqrt{\nu}\|\nabla R\|_2 (\|(\mathrm{div}_x u^b)(\nabla\times\vec{n})\|_2 + \|\vec{n}\times(\nabla_x\mathrm{div}_x u^b)\|_2) \|\om\|^{\frac{1}{2}}_2 \, \|\nabla\om\|_2^\frac{1}{2}\\
\le & \frac{1}{2}\e\nu\|\nabla\om\|_2^2 + C\|\om\|^2_2 + C\nu^{-\frac{1}{2}}.
\end{aligned}
\end{equation*}
Hence, we obtain
\begin{equation}\tag{6.14}
E_3\le C\nu^{-\frac{1}{2}}+C\|\om\|_2^2+\e\nu\|\nabla\om\|_2^2.
\end{equation}
To estimate $E_4$, we note that
\begin{equation*}
\begin{aligned}
E_4\equiv &\int_\o \nabla \times(R\cdot \vec{n}\p_z u^b)\cdot\om\dif x\\
= & \int_\o (\p_z u^b\times\nabla(R\cdot\vec{n}))\cdot\om\dif x +
\int_\o (R\cdot \vec{n})(\nabla_x\times\p_z u^b)\cdot\om \dif
x+\frac{1}{\sqrt{\nu}}\int_\o (R\cdot \vec{n})(\vec{n}\times\p_z^2
u^b)\cdot\om dx.
\end{aligned}
\end{equation*}
Since
$R\cdot\vec{n}=R^\nu\cdot\vec{n}+b\cdot\vec{n}=R^\nu\cdot\vec{n}+\nu(t,x,0)\cdot\vec{n}$
due to (3.8), thus
$$||R\cdot\vec{n}||_{L^\infty(0,T; L^6(\o))} \leq C,$$
by (6.5) and the regularity estimates for $u^b$. It follows that
\begin{equation}\tag{6.15}
\begin{aligned}
|E_4|\le &\|\p_z u^b\|_{\infty}\|\nabla(R\cdot \vec{n})\|_2\|\om\|_2+\|R\cdot\vec{n}\|_6\|\nabla_x\times(\p_z u^b) \|_3\|\om\|_2\\
&+\frac{1}{\sqrt{\nu}} \|R\cdot \vec{n}\|_{6} \|\p_z^2 u^b\|_2 \|\om\|_3\\
\le & C \|\om\|_2^2+C \nu^{-1}+ \frac{C}{\sqrt{\nu}} \nu^{\frac{1}{4}} \|u^b\|_{0,1,2} \|\om\|^{\frac{1}{2}}_2 \|\nabla\om\|^{\frac{1}{2}}_2\\
\le & C\|\om\|_2^2+C \nu^{-1}+\e\nu\|\nabla\om\|^2_2.
\end{aligned}
\end{equation}
Integrating by parts and using (6.3), one can get
\begin{equation*}
\begin{aligned}
|E_5|&=\left|-\sqrt{\nu}\int_\o R\cdot\nabla_x u^b\C\om\dif x\right|\le \sqrt{\nu}\|R\|_6\|\nabla_x u^b\|_3\|\C\om\|_2\\
&\le \sqrt{\nu}\|\nabla_x u^c\|_{1,2,1}\|\nabla R\|_2\|\C\om\|_2\le C \nu^{-1}+\|\om\|_2^2+\e\nu\|\nabla\om\|_2^2.
\end{aligned}
\end{equation*}
Since $\p_t b(x,t)=\p_tv(t,x,0)+\frac{1}{\sqrt{\nu}}\p_t u^b(t,x,0)$, hence
\begin{equation*}
\begin{aligned}
\|\p_t \C b(x,t)\|_2&=\|\p_t\C_xv(t,x,0)+\frac{1}{\sqrt{\nu}}\p_t \C_x u^b(t,x,0)\|\\
&\le\|\p_t\C_xv(t,x,0)\|_2+\frac{1}{\sqrt{\nu}}\|\p_t \C_x u^b(t,x,0)\|\\
&\le C\|\p_t u^b\|_{1,2,0}+\frac{1}{\sqrt{\nu}}\|\p_tu^b\|_{1,1,1}
\end{aligned}
\end{equation*}
Therefore, $E_6$ can be bounded as follows,
\begin{equation}\tag{6.16}
\begin{aligned}
|E_6|\le \|\p_t \C b(x,t)\|_2\|\om\|_2\le C(\|\p_t
u^b\|^2_{1,2,0}+\nu^{-1}\|\p_tu^b\|^2_{1,1,1})+\|\om\|_2^2.
\end{aligned}
\end{equation}
Integrating by parts shows that $E_7=\nu\int_\o\Delta_x b\cdot \C\om\dif x$.  Hence
\begin{equation}\tag{6.17}
\begin{aligned}
|E_7|&\le\nu\|\Delta b(x,t)\|_2\|\C\om\|_2\le\nu(\|\Delta_x v(t,x,0)\|_2+\frac{1}{\sqrt{\nu}}\|\Delta_xu^b(t,x,0)\|_2)\|\C\om\|_2\\
&\le C(\nu\|u^b\|^2_{1,3,0}+\|u^b\|^2_{1,2,1}) +\e\nu\|\nabla\om\|_2^2.
\end{aligned}
\end{equation}

Since $\|b\|_\infty+\|b\|_{H^2}+ ||\nabla_x b||_\infty + ||\nabla^2_x b||_\infty \le C\|u^b\|_{1,3,1}\nu^{-\frac{1}{2}}$, so
\begin{equation}\tag{6.18}
\begin{aligned}
|E_8|=&\left|\int_\o\nabla\times(u^\nu\cdot\nabla b)\cdot\om\dif x\right|\\
\leq &\int_\o|\nabla u^\nu| \,|\nabla b|\,|\om|\dif x + \left| \int_\o u^\nu\cdot\nabla(\nabla\times b)\cdot \om\dif x\right|\\
\leq & \|\nabla u^\nu\|_\infty \|\nabla b\|_2 \|\om\|_2+\|\nabla^2_x b\|_\infty \|u^\nu\|_2 \|\om\|_2\\
\leq & C\|\om\|^2_2+C\nu^{-1}.
\end{aligned}
\end{equation}
Similarly,
\begin{equation}\tag{6.19}
\begin{aligned}
|E_9|=&\left|\int_\o\nabla\times(b\times\nabla u^0)\cdot\om\dif x\right|\\
\leq & \left|\int_\o |(\nabla\times b)| |\nabla u^0| |\om|\dif x\right| + \int_\o |b| |\nabla\cdot(\nabla\times u^0)| |\om|\dif x\\
\leq & C\|\om\|^2_2+ C\nu^{-1}.
\end{aligned}
\end{equation}
Due to (3.8), it holds that $b\cdot\vec{n}=\frac{1}{\sqrt{\nu}}u^b(t,x,0)\cdot\vec{n}+v(t,x,0)\vec{n}=v(t,x,0)\vec{n}=\bar{v}(t,x,0)=-\int^\infty_0\mathrm{div}_x u^b(t,x,z)\dif z$, one can get
\begin{equation}\tag{6.20}
\begin{aligned}
|E_{10}+E_{11}+E_{12}| \le & \sqrt{\nu}\left|\int_\o \bar{v}(t,x,0) \di_x u^b\vec{n}\cdot\nabla\times\om\dif x \right| + \left| \int_\o \bar{v}(t,x,0)\p_z u^b\cdot(\nabla\times\om)\dif x\right|\\
&+\sqrt{\nu} \left|\int_\o (b\cdot\nabla u^b)\cdot\C\om\dif x\right|\\
\le & \e\nu\|\nabla\om\|_2^2+C\nu^{-1}.
\end{aligned}
\end{equation}
It remains to estimate the term $\int_\o\C (R.H.S)\cdot\om\dif x.$ Set
\begin{equation*}
\int_\o\C (R.H.S)\cdot\om\dif x=\sum_{i=1}^{14} D_i,
\end{equation*}
Where
\begin{equation*}
\begin{array}{ll}
\displaystyle D_1=-\int_\Omega \C(\partial_tv)\cdot\om \mathrm{d}x,
& \displaystyle D_2=\int_\Omega \C\triangle u^0 \cdot\om \mathrm{d}x,\\[3mm]
\displaystyle D_3=\sqrt{\nu}\int_\Omega \C[\triangle_xu^b]\cdot\om\mathrm{d}x,
& \displaystyle D_4=\int_\Omega \C(2n\cdot\nabla_x\partial_zu^b)\cdot\om \mathrm{d}x, \\[3mm]
\displaystyle D_5=\nu\int_\Omega \C(\triangle_x[v(x,\frac{\varphi(x)}{\sqrt{\nu}}])\cdot\om
\mathrm{d}x, & \displaystyle D_6=-\int_\Omega \C(u^\nu\cdot\nabla_xv)\cdot\om\mathrm{d}x,\\[3mm]
\displaystyle D_7=-\int_\Omega \C(v\cdot\nabla u^0)\cdot\om
\mathrm{d}x, & \displaystyle D_8=-\frac{1}{\sqrt{\nu}}\int_\Omega
\C( u^0\cdot
n\partial_zv)\cdot\om\mathrm{d}x,\\[3mm]
\displaystyle D_9=-\sqrt{\nu}\int_\Omega \C(v\cdot
\vec{n}\partial_z v)\cdot\om
\mathrm{d}x, & \displaystyle D_{10}=-\int_\Omega \C(v\cdot
\vec{n}\partial_zu^b)\cdot\om
\mathrm{d}x,\\[3mm]
\displaystyle D_{11}=-\int_\Omega \C(u^b\cdot \nabla_x
u^b)\cdot\om \mathrm{d}x, & \displaystyle D_{12}=\int_\Omega \C(\triangle\varphi\cdot
\partial_zu^b)\cdot\om\mathrm{d}x,\\[3mm]
\displaystyle D_{13}=-\sqrt{\nu}\int_\Omega
\C(v\cdot\nabla_xu^b)\cdot\om
\mathrm{d}x,
& \displaystyle D_{14}=\frac{1}{\sqrt{\nu}}\int_\Omega \C (\nabla_x q)\cdot\om\mathrm{d}x.
\end{array}
\end{equation*}

As in the argument in the estimate of $E_6$, one can infer
\begin{equation}\tag{6.21}
\begin{aligned}
|D_1|&\le C(\|\p_t u^c\|^2_{1,2,0})\nu^{-1}+C\|\om\|_2^2.
\end{aligned}
\end{equation}
It follows from the regularity of $u^0$ and $u^b$ and integration by parts due to (6.3) that
\begin{equation}\tag{6.22}
\begin{aligned}
|D_2|&=\left|\int_\o\C\Delta u^0\cdot\om\right|\le C+\|\om\|_2^2,\\
|D_3|&=\sqrt{\nu}\left|\int_\o\Delta_xu^b\C\om\dif x\right|\le \sqrt{\nu}\|\Delta_xu^b\|_2\|\C\om\|_2\\
&\le C\|u^b\|_{0,2,0}^2+\e\nu\|\C\om\|_2^2.
\end{aligned}
\end{equation}
Since $\C_x[2\vec{n}\cdot\nabla_x\p_z u^b]=2\C \vec{n}\cdot\nabla_x\p_z u^b-2\vec{n}\cdot\nabla_x\C_x u^b-2\vec{n}\cdot\nabla_x(\p_z^2 u^b\times \vec{n})\frac{1}{\sqrt{\nu}}$, thus
\begin{equation}\tag{6.23}
\begin{aligned}
|D_4|\le C\|u^b\|_{0,1,2}^2\nu^{-1}+C\|u^b\|_{0,2,1}^2+\|\om\|_2^2.
\end{aligned}
\end{equation}
Due to
\begin{equation*}
\begin{aligned}
\Delta_x\left[h(x,\frac{\varphi(x)}{\sqrt{\nu}})\right]&=\Delta_x h(x,\frac{\varphi(x)}{\sqrt{\nu}})+2\frac{n(x)}{\sqrt{\nu}}\cdot\nabla_x\p_z h(x,\frac{\varphi(x)}{\sqrt{\nu}})+\frac{\Delta\varphi}{\sqrt{\nu}}\p_z h(x,\frac{\varphi(x)}{\sqrt{\nu}})\\
&+\frac{1}{\nu}\p_z^2 h(x,\frac{\varphi(x)}{\sqrt{\nu}}),
\end{aligned}
\end{equation*}
we can estimate the $D_5$ to obtain
\begin{equation}\tag{6.24}
\begin{aligned}
|D_5|\le & \nu\left|\int_\o \Delta_x\left[v(t,x,\frac{\varphi}{\sqrt{\nu}})\right]\C\om\dif x\right|\\
\le & \nu\left|\int_\o \left(\Delta_x v(t,x,\frac{\varphi}{\sqrt{\nu}})\right.\right.+2\frac{\vec{n}(x)}{\sqrt{\nu}}\cdot\nabla_x\p_z v(t,x,\frac{\varphi(x)}{\sqrt{\nu}})+\frac{\Delta\varphi}{\sqrt{\nu}}\p_z v(t,x,\frac{\varphi(x)}{\sqrt{\nu}})\\
&\left.\left.+\frac{1}{\nu}\p_z^2 v(t,x,\frac{\varphi(x)}{\sqrt{\nu}}\right)\C\om\dif x\right|\\
\le & C\|u^b\|^2_{1,3,0}+\frac{1}{2}\e\nu\|\nabla\om\|_2^2+\left|\int_\o \vec{n}\di_x \p_zu^b\cdot\C\omega\dif x\right|\\
\le & C(\|u^b\|^2_{1,3,0}+\|u^b\|^2_{0,1,1}\nu^{-1})+\e\nu\|\nabla\om\|_2^2.
\end{aligned}
\end{equation}
Next, one has
\begin{equation}\tag{6.25}
\begin{aligned}
|D_6|=& \left|\int_\o u^\nu\cdot\nabla_x v\cdot\om\dif x\right|\\
\le & ||\nabla\om||_2 \, ||\nabla_x v||_3 \, ||u^\nu||_6\\
\le & C\|\nabla\om\|_2 \|u^b\|_{1,3,1}\|\nabla u^\nu\|_6\\
\le & C\|\nabla\om\|_2 \|u^b\|_{1,3,1}\\
\le & \e\nu\|\nabla\om\|^2_2 + C\nu^{-1} \|u^b\|^2_{1,3,1}.
\end{aligned}
\end{equation}
Direct estimate using (4.1) leads to
\begin{equation}\tag{6.26}
|D_7|=\left|\int_\o(v\cdot\nabla u^0\cdot\C\om)\dif x\right| \le \e\nu\int_\o |\nabla\om|^2 \dif x + O(1)\nu^{-1}.
\end{equation}
Next, it follows from the regularities of $f$ and integration by parts that
\begin{equation}\tag{6.27}
\begin{aligned}
|D_8|= & \left|\int_\o\frac{ u^0\cdot n}{\varphi}(z\p_z v)\big|_{z=\frac{\varphi(x)}{\sqrt{\nu}}}\cdot\C\om\right|\\
\le & \|f\|_\infty \|u^b\|_{2,1,0} \|\nabla\times\om\|_2 \le \e\nu\|\nabla\om\|_2^2+C\nu^{-1}.
\end{aligned}
\end{equation}
Similarly, it follows from the uniform bounds on $\p_z u^b$ and $\Delta\phi$ that
\begin{equation}\tag{6.28}
\left|\sum_{i=9}^{13}D_i\right| \le C+C\nu^{-1} + \e\nu\|\nabla\om\|_2^2,
\end{equation}
where $C$ depends on $\|u^b\|_{L^\infty(0,T;H_{1,3,1})}$.

It remains to estimate $D_{14}$. To this end, we note that $\C[\nabla_x q]=\p_z\nabla_x q\times \vec{n}$. Thus, due to (3.7), one has
\begin{equation}\tag{6.29}
\begin{aligned}
|D_{14}|= & \frac{1}{\sqrt{\nu}}\left|\int_\o (\p_z\nabla_x q\times \vec{n})\cdot\om\dif x\right|\le \frac{1}{\sqrt{\nu}} \|\nabla_x\p_z q\|_2\|\om\|_2\\
\le & \frac{1}{\sqrt{\nu}}\|\nabla\left( (u^0\cdot\nabla_x u^b+u^b\cdot\nabla u^0)\cdot \vec{n}\right)\|_2\|\om\|_2\\
\le & C\nu^{-\frac{1}{2}}\|u^b\|^2_{0,2,0}+\|\om\|^2_2.
\end{aligned}
\end{equation}
As a consequence of all the estimates (6.11)-(6.29) and Propositions 4.3-4.6, we obtain
\begin{equation}\tag{6.30}
\frac{\dif}{\dif t}\|\om\|^2_2+\frac{\nu}{2} \int_\o |\nabla\om|^2\dif x \le C\|\om\|_2^2+C(1+\|\p_t u^b\|_{1,1,1}^2)\nu^{-1}.
\end{equation}
Since $(\|\p_t u^b\|_{1,1,1}^2+1)$ is integrable on [0,T], so (6.30) implies that $||\om||_{L^\infty(0,T; L^2(\o))}\leq C\nu^{-\frac{1}{2}}$. This and (6.8) show that $\sup_{0\leq t\leq T}||R||_{1,2}\leq C\nu^{-\frac{1}{2}}$. Hence
\begin{equation}\tag{6.31}
\sup_{0\leq t\leq T} \|R^\nu\|_{1,2} \le C\nu^{-\frac{1}{2}}.
\end{equation}
Note that, due to Lemma 4.1, one has that
\begin{equation}\tag{6.32}
\left\|\nabla_x[v(t,x,\frac{\varphi(x)}{\sqrt{\nu}})]\right\|_2 \leq \|u^b\|_{1,2,0} + \nu^{-\frac{1}{4}} \|u^b\|_{0,2,0}.
\end{equation}
Thus, we have shown that for $\nu\in(0,\nu_0]$,
\begin{equation}\tag{6.33}
\begin{aligned}
& \sup_{0\leq t\leq T} \|u^\nu-u^0\|_{1,2}\leq C\nu^{-\frac{1}{4}},\\
& \int^T_0 \|u^\nu-u^0-\sqrt{\nu} u^b\|^2_{2,2}\dif t\leq C.
\end{aligned}
\end{equation}
Furthermore, similar to the derivation of (6.30), one can show that for $p\in(3,6]$, it holds that
\begin{equation}\tag{6.34}
\frac{\dif}{\dif t} ||\om||^p_p\leq C(||u^b||^{p'}_{1,3,3}+1) ||\om||^p_p +C\nu^{-\frac{p}{2}},
\end{equation}
since $p>3$, so it follows from the Sobolev's embedding that
\begin{equation}\tag{6.35}
||R^\nu||_{L^\infty((0,T)\times \o)} \leq C\nu^{-\frac{1}{2}}.
\end{equation}
Consequently,
\begin{equation}\tag{6.36}
||u^\nu-u^0||_{L^\infty((0,T)\times \o)}\leq C\nu^{\frac{1}{2}}.
\end{equation}
Thus, the proof of Theorem 3.4 is completed.

\end{document}